\def\<{\langle}
\def\>{\rangle}
\def\s{\sigma}
\newcommand{\bmx}{\begin{pmatrix}}
\newcommand{\emx}{\end{pmatrix}}
\newtheorem{thm}{Theorem}[section]
\newtheorem{lem}[thm]{Lemma}
\newtheorem{cor}[thm]{Corollary}
\newtheorem{pro}[thm]{Proposition}
\newtheorem{ex}[thm]{Example}
\theoremstyle{definition}
\newtheorem{defi}{Definition}[section]
\theoremstyle{remark}
\newtheorem{rmk}{Remark}[section]
\begin{document}
\title{\sf Extensions of pseudo-euclidean mock-Lie superalgebras}
\date{}
\author{ Tahar Benyoussef$^{1}$\footnote{   E-mail: tahar.benyoussef40@gmail.com}~~ and Sami Mabrouk$^{2}$\footnote{   E-mail: mabrouksami00@yahoo.fr, sami.mabrouk@fsgf.u-gafsa.tn}}
\date{\small{1.  Faculty of Sciences, University of Sfax,   BP
1171, 3000 Sfax, Tunisia.
 }\\
 \small{2. University of Gafsa, Faculty of Sciences Gafsa, 2112 Gafsa, Tunisia.
 }}

\maketitle

\begin{abstract}
In this article, we introduce mock-Lie superalgebras, we give some definitions, properties, constructions, and we study their representations. Moreover we introduce pseudo-euclidean mock-Lie superalgebras which are mock-Lie superalgebras with even non-degenerate supersymmetric and invariant bilinear forms. Finally, we study the double extensions and generalized double extensions of mock-Lie superalgebras and their isometries.
\end{abstract}

\textbf{Keywords and phrases}:  mock-Lie algebra, Jordan superalgebra, pseudo-euclidean mock-Lie algebra,  extension, generalized double extension, representation, Jordan superalgebra.

\textbf{Mathematics Subject Classification} (2020): 17A45, 	17B60, 	16G30, 	13B02.

\numberwithin{equation}{section}

\tableofcontents

\section{Introduction}

The class of mock-Lie algebras represents a relatively new category that has emerged in mathematical literature as an example of Jordan algebras. These algebras are also referred to as Jacobi-Jordan algebras. They are characterized as commutative algebras that satisfy the Jacobi identity. First introduced in \cite{B.F}
mock-Lie algebras have since garnered significant attention, with numerous studies conducted on the topic, as evidenced by works such as  \cite{P.Z,KAST1,KAST2,C.G}. Often regarded as the "strange cousins" of Lie algebras, these algebras have been explored in greater depth in various references, including \cite{Attan,Agore1,Agore2,Haliya,C.G, Benali, MaMabroukMakhloufSong, Hou2}, which provide further insights and investigations into their properties and applications. Following this relation with Lie algebras and Jordan algebras, it is normal to base first on the notions of Lie superalgebras to take the first step in the superization of mock-Lie superalgebra and secondly on Jordan superalgebras based on a proposition in this article which shows that any mock-Lie superalgebra is a Jordan superalgebra ( see Proposition \ref{Jacob-Jor.Is.Jor}). In this sense it is convenient to  talk in this introduction about the history of quadratic Lie and quadratic Jordan superalgebras and the notion of double extensions of these superalgebras. 

Quadratic Lie algebras constitute a fundamental object of investigation in contemporary mathematical physics due to their intrinsic algebraic properties and broad theoretical implications. These algebras are formally characterized by the existence of a non-degenerate, Ad-invariant symmetric bilinear form, which establishes their significance across multiple mathematical disciplines. Geometrically, they correspond precisely to Lie algebras of Lie groups equipped with bi-invariant pseudo-Riemannian metric tensors, with the algebraic form representing the metric's restriction to the tangent space at the identity. This correspondence facilitates their application in the analysis of symmetric spaces, homogeneous pseudo-Riemannian manifolds, and exceptional holonomy groups. Within quantum field theory, these algebras assume particular importance through their role in the Sugawara construction \cite{JMF}, which provides an isomorphism between the Virasoro algebra and quadratic subalgebras of affine Lie algebras. This construction fundamentally relates the stress-energy tensor to conserved currents in two-dimensional conformal field theories, with applications extending to Wess-Zumino-Novikov-Witten models and current algebra representations. The construction's feasibility is contingent upon the existence of the invariant form, illustrating the profound physical consequences of this algebraic property.

The classification theory for these structures has been revolutionized by the development of double extension methodologies. The seminal work of Medina and Revoy \cite{Md} introduced this approach through the systematic combination of central extensions and semidirect product operations, yielding an inductive classification scheme for quadratic Lie algebras. This framework provides both constructive techniques and deep structural characterization of these algebraic objects. Subsequent theoretical advancements have substantially generalized this framework. Benamor and Benayadi \cite{BS} established its extension to $\mathbb{Z}_2-$ graded Lie superalgebras, enabling classification of quadratic Lie superalgebras with applications in supersymmetric field theories. Bajo et al. \cite{Bajo} subsequently incorporated solvability conditions, producing a complete classification of solvable metric Lie superalgebras. $\Gamma$-graded Lie algebras with quadratic-algebraic structures, that is  $\Gamma$-graded Lie algebras provided with homogeneous, symmetric, invariant and nondegenerate bilinear forms,  have been extensively studied in \cite{AmmarAyadiMabroukMakhlouf}.
 In \cite{Bak}, the authors focuses on the most general case of solvable pseudo-Euclidean Jordan superalgebras. These developments collectively demonstrate the method's capacity for unified treatment of diverse algebraic structures while maintaining rigorous inductive foundations. The classical case of a symplectic or pseudo-euclidean or symplectic and pseudo-euclidean mock-Lie algebra has been recently studied by Baklouti et al \cite{BakBen1SymJ.J.A}. The continued refinement of this theoretical framework suggests potential applications in higher-dimensional conformal field theories, exceptional geometry, and the classification of non-associative algebraic structures.

The structure of the article is as follows: In Section \ref{section.2}, we introduce mock-Lie superalgebras and show that every mock-Lie superalgebra is a Jordan superalgebra. We explore the representation theory of mock-Lie superalgebras, providing key examples known as the adjoint and dual representations. Additionally, we offer a characterization through
the semi-direct product. In Section \ref{section.3}, we introduce the notion of mock-Lie superalgebra with an even supersymmetric, invariant and non-degenerate bilinear form called pseudo-euclidean  mock-Lie superalgebra and give some construction and characterization. In Section \ref{section.4}, given a pseudo-euclidean mock-Lie superalgebra $\mathcal{J}_1$ and a mock-Lie super algebra $\mathcal{J}_2$, we construct a mock-Lie superalgebra $\mathcal{J}_1\oplus{\mathcal{J}_2}^*$ which is called central extension of $\mathcal{J}_1$ by ${\mathcal{J}_2}.$  Furthermore we show that $\mathcal{J}_2\oplus\mathcal{J}_1\oplus{\mathcal{J}_2}^*$ is a mock-Lie superalgebra which is called double extension of $\mathcal{J}_1$ by $\mathcal{J}_2,$ we construct in addition a new supersymmetric invariant bilinear form on $\mathcal{J}_2\oplus\mathcal{J}_1\oplus{\mathcal{J}_2}^*.$ Finally, we introduce the notion of generalized double extensions of mock-Lie superalgebra by the one dimentional odd mock-Lie superalgebra $\mathbb{K}u$ with $u\in\mathcal{J}_{\bar{1}}.$ Moreover we investigate the notion of isometry betwean tow generalized double extensions of pseudo-euclidean mock-Lie superalgebra.

Throughout this paper, all algebras and vector spaces are finite-dimensional and over a field
$\mathbb{K}$ of characteristic $0$.
\section{Mock-Lie superalgebras}\label{section.2}

In the following, we study  mock-Lie superalgebras and discuss some of their properties. The representation theory of mock-Lie superalgebras is given.
Let $\mathcal{V}=\mathcal{V}_{\bar{0}}\oplus \mathcal{V}_{\bar{1}}$ be a $\mathbb{Z}_{2}$-graded vector space over the field $\mathbb{K}$ of characteristic $0$. An element $x$ in $\mathcal{V}$ is called homogeneous if $x\in \mathcal{V}_{\bar{0}}$ or $x\in \mathcal{V}_{\bar{1}}$. Throughout this paper, all elements are supposed to be homogeneous unless otherwise stated. For a homogeneous element $x$ we shall use the standard notation $\vert x\vert\in \mathbb{Z}_{2}=\{\bar{0},\bar{1}\}$ to indicate its degree, i.e. whether it is contained in the even part ($\vert x\vert=\bar{0}$) or in the odd part ($\vert x\vert=\bar{1}$). All  superalgebras considered in this paper are finite-dimensional.
Let $\mathcal{V}$ be a $\mathbb{K}-$vector space and $\gamma:= {\otimes \mathcal{V}}$ a Grassmann (or exterior) algebra of $\mathcal{V}$. We know that $\Gamma:= \oplus_{i\in \mathbb{Z}}\otimes^i\mathcal{V}= \Gamma_{\bar{0}}\oplus \Gamma_{\bar{1}}$ is a
$\mathbb{Z}_{2}-$graded associative algebra, where $ \Gamma_{\bar{0}}:= \oplus_{i\in \mathbb{Z}}\otimes^{2i}\mathcal{V}$ and $ \Gamma_{\bar{1}}:= \oplus_{i\in \mathbb{Z}}\otimes^{2i+1}\mathcal{V}$, such that
$X_\alpha X_\beta= (-1)^{\alpha\beta }X_\beta X_\alpha, \,\, \forall \, (X_\alpha,X_\beta)\in \Gamma_\alpha\times \Gamma_\beta.$
\subsection{ Definition and constructions}
Let $(\mathcal{A}=\mathcal{A}_{\bar{0}}\oplus \mathcal{A}_{\bar{1}}, \cdot)$ be a superalgebra and $\Gamma(\mathcal{A})$ its Grassmann enveloping algebra which is a subalgebra of $\mathcal{A}\otimes \Gamma$ given by $\Gamma(\mathcal{A})=\mathcal{A}_{\bar{0}}\otimes \Gamma_{\bar{0}}\oplus \mathcal{A}_{\bar{1}}\otimes \Gamma_{\bar{1}}$. Let us assume now that $\mathcal{M}$ is a homogeneous variety of algebras. Then, $\mathcal{A}$ is said to be a $\mathcal{M}$-superalgebra if $\Gamma(\mathcal{A})$ belongs to $\mathcal{M}$ (see \cite{ZS}). So, $\mathcal{A}=\mathcal{A}_{\bar{0}}\oplus \mathcal{A}_{\bar{1}}$ is a mock-Lie superalgebra if $\Gamma(\mathcal{A})$ is a mock-Lie algebra. Consequently, we get the following definition.
\begin{defi}\label{d11}
Let $\mathcal{J}=\mathcal{J}_{\bar{0}}\oplus \mathcal{J}_{\bar{1}}$ be a $\mathbb{Z}_{2}$-graded vector space and let $``\bullet": \mathcal{J} \otimes \mathcal{J} \rightarrow \mathcal{J}$ be an even  bilinear map on $\mathcal{J}$ (i.e $\mathcal{J}_{i} \bullet \mathcal{J}_{j}\subset\mathcal{J}_{i+j},~\forall i,j\in \mathbb{Z}_{2}$). Then
$(\mathcal{J},\bullet)$ is called a mock-Lie superalgebra if, for all $x,y,z\in \mathcal{J}_{\vert x\vert}\times \mathcal{J}_{\vert y\vert}\times \mathcal{J}_{\vert z\vert}$, the following equations are satisfied:
{\small\begin{align}
& x\bullet y=(-1)^{\vert x\vert \vert y\vert} y\bullet x,\quad \text{ (Super-commutativity)}, \label{S.Commt.Ident}\\
& (-1)^{\vert x\vert \vert z\vert}x\bullet(y\bullet z)+
(-1)^{\vert x\vert \vert y\vert}y\bullet(z\bullet x)+(-1)^{\vert y\vert \vert z\vert}z\bullet(x\bullet y)=0,\quad \text{ (super-Jacobi~identity)}. \label{S.Jacob.Ident}
\end{align}}
The super-Jacobi identity \eqref{S.Jacob.Ident} is equivalent to $$x\bullet(y\bullet z)=-(x\bullet y)\bullet z-(-1)^{\vert x\vert \vert y\vert} y\bullet (x\bullet z),$$ meaning that $L_{x}$ is an anti-superderivation of $\mathcal{J}$ of degree $\vert x\vert$, where $L_{x}$ is the left multiplication by $x$ defined by $L_{x}(y):=x\bullet y$.
Recall that an endomorphism $D\in{\rm End}(\mathcal{J})_{\alpha}$ is said to be:
\begin{enumerate}
\item[\bf (1)] superderivation of $\mathcal{J}$ of degree $\alpha$ if, for all $x\in \mathcal{J}_{\vert x\vert},\, y\in \mathcal{J},$~~$$D(x \bullet y)=D(x)\bullet y+(-1)^{\alpha \vert x\vert}x\bullet D(y),$$
\item[\bf (2)] anti-superderivation of $\mathcal{J}$ of degree $\alpha$ if, for all $x\in \mathcal{J}_{\vert x\vert},\, y\in \mathcal{J}$,~~$$D(x \bullet y)=-D(x)\bullet y-(-1)^{\alpha \vert x\vert}x\bullet D(y).$$
\end{enumerate}
\end{defi}
The set of all superderivations and anti-superderivations of $\mathcal{J}$ will be denoted by $\mathfrak{Der}(\mathcal{J})  .$ and $\mathfrak{AnDer}(\mathcal{J})$ respectively. Clearly that $\mathfrak{Der}(\mathcal{J})$ and $\mathfrak{AnDer}(\mathcal{J})$ are  graded superspaces. 
   A homomorphism of  mock-Lie superalgebras $(\mathcal{J}_1,\bullet_1)$ to $(\mathcal{J}_2,\bullet_2)$ is an even linear map  $\Psi:(\mathcal{J}_1,\bullet_1)\to (\mathcal{J}_2,\bullet_2)$, such that $\Psi(x\bullet_1y)=\Psi(x)\bullet_2 \Psi(y),$ for $ x,y\in\mathcal{J}_1.$
   
In the following, we recall the notation of Jordan superalgebras introduced in \cite{MartínezZelmanov} (see also \cite{Abdaoui, MabroukNcib}), and we show the connexion between the Jordan superalgebra and the mock-Lie superalgebra. 
\begin{defi}
    A superalgebra $\mathcal{J}$ over a field $\mathbb{K}$ is called a Jordan superalgebra if it equipped by an even super-commutative product $\bullet$ satisfies the following two identity
    \begin{align}\label{SuperJorIdentity}
       \nonumber &(-1)^{|a||c|}(a\bullet b)(c\bullet d)+(-1)^{|a||b|}(b\bullet c)(a\bullet d)+(-1)^{|c||b|}(c\bullet a)(b\bullet d)\\=&(-1)^{|a||c|}a\bullet((b\bullet c)\bullet d)+(-1)^{|a||b|}b\bullet((c\bullet a)\bullet d)+(-1)^{|c||b|}c\bullet((a\bullet b)\bullet d)
        \end{align}
    for all $a\in \mathcal{J}_{\vert a\vert},b\in \mathcal{J}_{\vert b\vert},c\in {J}_{\vert c\vert}\quad and\quad d \in \mathcal{J}$.
\end{defi}
With $a=b=c=x,d=y$, the identity \eqref{SuperJorIdentity} can be written
\begin{align*}
   & (-1)^{|x||x|}x^2 \bullet(x\bullet y)+(-1)^{|x||x|}x^2 \bullet(x\bullet y)+(-1)^{|x||x|}x^2 \bullet(x\bullet y)\\=&(-1)^{|x||x|}x\bullet(x^2 \bullet y)+(-1)^{|x||x|}x\bullet(x^2 \bullet y)+(-1)^{|x||x|}x\bullet(x^2 \bullet y).
   \end{align*}
   Then we have $x^2\bullet(x\bullet y)=x\bullet(x^2 \bullet y).$
   
   Now using the super-commutativity we have  
   $(-1)^{|x||y|}x^2 \bullet(y\bullet x)=(-1)^{|x|(|x|+|x|+|y|)}(x^2\bullet y)\bullet x.$ Finaly we have the identity 
   \begin{equation}\label{new-jordan-id}
       x^2 \bullet(y\bullet x)=(x^2\bullet y)\bullet x 
   \end{equation}
   for all $x,y\in \mathcal{J}_{\vert x\vert}\times \mathcal{J}_{\vert y\vert}.$ 
   Hence $\mathcal{J}$ is a Jordan superalgebra if $\mathcal{J}$ a super-commutative and the identity \eqref{new-jordan-id} holds.
   \begin{pro}\label{Jacob-Jor.Is.Jor}
       Every mock-Lie superalgebra is a Jordan superalgebra verifying $x^3=0$.
   \end{pro}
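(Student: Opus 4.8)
The plan is to verify the two assertions separately, leaning on the reduction established just before the statement: a super-commutative $\mathcal{J}$ is Jordan as soon as the identity \eqref{new-jordan-id}, namely $x^2\bullet(y\bullet x)=(x^2\bullet y)\bullet x$, holds. Super-commutativity \eqref{S.Commt.Ident} is part of the definition, so only \eqref{new-jordan-id} and the nilpotency $x^3=0$ remain. The nilpotency is immediate: setting $y=z=x$ in the super-Jacobi identity \eqref{S.Jacob.Ident} collapses the three summands into $3(-1)^{|x||x|}\,x\bullet(x\bullet x)=0$, and since $\mathbb{K}$ has characteristic zero this forces $x^3=x\bullet(x\bullet x)=0$.

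For the Jordan identity I would first rewrite \eqref{new-jordan-id} using super-commutativity. Both of its sides then become scalar multiples of $x^2\bullet(x\bullet y)$ (resp. $x\bullet(x^2\bullet y)$, which has the same parity factor since $|x^2|=\bar 0$), so \eqref{new-jordan-id} is equivalent to the single identity $x^2\bullet(x\bullet y)=0$ for all homogeneous $x,y$. The odd case is disposed of at once: if $|x|=\bar 1$, super-commutativity gives $x\bullet x=-x\bullet x$, hence $x^2=0$ and every relevant term vanishes. Thus the whole problem reduces to proving $x^2\bullet(x\bullet y)=0$ when $x$ is even.

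The crux, and the step I expect to require the most care, is this even case, which I would handle by playing off two consequences of the super-Jacobi identity in its anti-superderivation form $x\bullet(y\bullet z)=-(x\bullet y)\bullet z-(-1)^{|x||y|}y\bullet(x\bullet z)$. Applying it to $(x,x,z)$ and using $|x|=\bar 0$ yields $2\,x\bullet(x\bullet z)=-x^2\bullet z$, that is $x^2\bullet z=-2\,x\bullet(x\bullet z)$ for every $z$. Applying the same identity to $(x^2,x,y)$, together with $x^2\bullet x=x^3=0$ and $|x^2|=\bar 0$, gives the relation $x^2\bullet(x\bullet y)=-\,x\bullet(x^2\bullet y)$. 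On the other hand, substituting the first identity into $x\bullet(x^2\bullet y)$ (first with $z=y$, then with $z=x\bullet y$) produces the opposite relation $x\bullet(x^2\bullet y)=x^2\bullet(x\bullet y)$. Writing $A:=x^2\bullet(x\bullet y)$, the two relations read $A=-x\bullet(x^2\bullet y)$ and $x\bullet(x^2\bullet y)=A$, so $A=-A$ and hence $A=0$ in characteristic zero; the companion term $x\bullet(x^2\bullet y)$ vanishes as well.

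Feeding $x^2\bullet(x\bullet y)=0$ back into the rewritten form of \eqref{new-jordan-id} shows that both of its sides are zero, so \eqref{new-jordan-id} holds and $\mathcal{J}$ is a Jordan superalgebra; together with $x^3=0$ this proves the proposition. The only delicate point throughout is the bookkeeping of the Koszul signs, and what makes the argument go through cleanly is precisely that $x^2$ is always even, which trivializes most of those signs and lets the two relations cancel into $A=-A$.
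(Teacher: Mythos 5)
Your proof is correct, and it runs on the same engine as the paper's: reduce the Jordan identity to the linearized form \eqref{new-jordan-id} and exploit the super-Jacobi identity twice; your relation $x^2\bullet z=-2\,x\bullet(x\bullet z)$ for even $x$ is exactly the paper's identity $(-1)^{|x||y|}x^2\bullet y=-(1+(-1)^{|x||x|})\,x\bullet(y\bullet x)$ specialized to $|x|=\bar{0}$ (the odd case being trivial in both treatments, since $x^2=0$ there). The difference lies in the endgame. The paper substitutes $y\mapsto y\bullet x$ into that relation and transforms $x^2\bullet(y\bullet x)$ into $(x^2\bullet y)\bullet x$ through a chain of equalities, whereas you show that both sides actually vanish, $x^2\bullet(x\bullet y)=x\bullet(x^2\bullet y)=0$, by playing the relation $x^2\bullet(x\bullet y)=-x\bullet(x^2\bullet y)$ (super-Jacobi applied to $(x^2,x,y)$ together with $x^2\bullet x=0$) against $x\bullet(x^2\bullet y)=x^2\bullet(x\bullet y)$ (obtained by iterating $x^2\bullet z=-2\,x\bullet(x\bullet z)$). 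Your route therefore yields, for free, the stronger statement $x^2\bullet(y\bullet x)=(x^2\bullet y)\bullet x=0$, which the paper proves separately in the proposition immediately following this one. One presentational quibble: you announce early on that \eqref{new-jordan-id} is \emph{equivalent} to $x^2\bullet(x\bullet y)=0$, but at that stage you have only shown it is equivalent to $x^2\bullet(x\bullet y)=x\bullet(x^2\bullet y)$; the equivalence with vanishing only becomes available once the sign-flipped relation is in hand. Since you end up proving that both sides are zero anyway, this ordering issue does not affect the validity of the argument.
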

   \begin{proof}
       By the super-Jacobi identity and the super-commutatuvity we have $x^3=0.$ According to  the super-Jacobi identity with $z=x$ we have, for all $x,y\in \mathcal{J}_{\vert x\vert}\times \mathcal{J}_{\vert y\vert}$, 
       $$(-1)^{|x||x|}x\bullet(y\bullet x)+(-1)^{|x||y|}y\bullet x^2 +(-1)^{|x||y|}x\bullet(x\bullet y)=0.$$
       This identity equivalent to 
       $$(-1)^{|x||y|}x^2\bullet y= -(1+(-1)^{|x||x|})x\bullet(y\bullet x).$$ 
       Now replacing $y$ by $y\bullet x$ yields 
       \begin{align*}
       (-1)^{|x||y|}x^2\bullet(y\bullet x)=&-(1+(-1)^{|x||x|})x\bullet((y\bullet x)\bullet x)\\=&x\bullet\big(-(1+(-1)^{|x||x|})(y\bullet x)\bullet x\big)\\=&(-1)^{|x||x|+|x||y|}x\bullet \big(-(1+(-1)^{|x||x|})x\bullet(y\bullet x)\big)\\=&(-1)^{|x||x|+|x||y|}x\bullet\big((-1)^{|x||y|}x^2\bullet y\big)\\=&x\bullet\big((-1)^{|x||x|}x^2 \bullet y\big)\\=&(-1)^{|x||y|}(x^2\bullet y)\bullet x.
       \end{align*}
       Hence $\mathcal{J}$ is a Jordan superalgebra.
       
       \end{proof}
       \begin{pro}
           If $(\mathcal{J},\bullet)$ is a mock-Lie superalgebra. Then,  for all $x,y\in \mathcal{J}_{|x|}\times\mathcal{J}_{|y|},$$$x^2\bullet(y\bullet x)=(x^2\bullet y)\bullet x=0.$$
          
       \end{pro}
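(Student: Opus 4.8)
The plan is to first observe that the asserted equality $x^2\bullet(y\bullet x)=(x^2\bullet y)\bullet x$ is nothing but the Jordan identity \eqref{new-jordan-id} already secured in Proposition \ref{Jacob-Jor.Is.Jor}; hence the only genuinely new content is the vanishing of this common value, and it suffices to prove $x^2\bullet(y\bullet x)=0$. I would dispose of the odd case at once: if $|x|=\bar 1$, super-commutativity \eqref{S.Commt.Ident} gives $x^2=x\bullet x=-x\bullet x$, so $x^2=0$ and both sides vanish trivially. The whole problem therefore reduces to $|x|=\bar 0$.

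For even $x$ I would reuse the intermediate identity extracted inside the proof of Proposition \ref{Jacob-Jor.Is.Jor}: putting $z=x$ in \eqref{S.Jacob.Ident} and simplifying with $(-1)^{|x||x|}=1$ yields $x^2\bullet y=-2\,x\bullet(y\bullet x)$. The first key step is to substitute $y\bullet x$ for $y$ in this relation, giving $x^2\bullet(y\bullet x)=-2\,x\bullet\big((y\bullet x)\bullet x\big)$. Using super-commutativity for even $x$ (so that $(y\bullet x)\bullet x=x\bullet(y\bullet x)$) together with the same identity once more, I would rewrite $(y\bullet x)\bullet x=-\tfrac{1}{2}\,x^2\bullet y$, which collapses the right-hand side to $x^2\bullet(y\bullet x)=x\bullet(x^2\bullet y)$.

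The second key step is to evaluate $x\bullet(x^2\bullet y)$ in a different way, this time exploiting $x^3=0$. Since $x^2$ is even, $x^2\bullet y=y\bullet x^2$, and applying the anti-superderivation form of \eqref{S.Jacob.Ident} recorded after Definition \ref{d11} to $y\bullet x^2$ produces $x\bullet(y\bullet x^2)=-(x\bullet y)\bullet x^2-y\bullet(x\bullet x^2)$; the last term dies because $x\bullet x^2=x^3=0$. Rearranging with super-commutativity then gives $x\bullet(x^2\bullet y)=-x^2\bullet(x\bullet y)=-x^2\bullet(y\bullet x)$. Comparing the two evaluations forces $x^2\bullet(y\bullet x)=-x^2\bullet(y\bullet x)$, whence $x^2\bullet(y\bullet x)=0$ in characteristic $0$, and $(x^2\bullet y)\bullet x=0$ follows immediately from \eqref{new-jordan-id}. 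I expect the main obstacle to be purely bookkeeping: choosing exactly which factor to expand so that the $x\bullet x^2=x^3$ term actually surfaces (rather than looping back into a tautology such as $x^2\bullet(x\bullet y)=x^2\bullet(x\bullet y)$), and tracking the parity signs carefully enough that the concluding ``equal to its own negative'' cancellation is clean.
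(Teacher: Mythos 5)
Your proof is correct and ultimately rests on the same two ingredients as the paper's own argument: the super-Jacobi identity applied to the triple $(x,y,x^2)$ together with $x^3=0$ (yielding $x^2\bullet(y\bullet x)=-(x^2\bullet y)\bullet x$, which is exactly your second key step), combined with the Jordan identity \eqref{new-jordan-id} to force the common value to equal its own negative and hence vanish. The parity case split and the re-derivation of $x^2\bullet(y\bullet x)=x\bullet(x^2\bullet y)$ via the $z=x$ substitution are valid but redundant, since the paper reaches the same conclusion in one line, uniformly in $|x|$.
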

       \begin{proof}
           By the super-Jacobi identity we have 
           $x^2\bullet (y\bullet x) +y\bullet x^3+(-1)^{|x||y|}x\bullet (x^2\bullet y)=0$. 
           Then $x^2\bullet (y\bullet x)=-(-1)^{|x||y|}x\bullet(x^2\bullet y)=-(x^2 \bullet y)\bullet x.$ 
           By \eqref{new-jordan-id} we have $(x^2\bullet y)\bullet x=x^2 \bullet(y\bullet x).$ Hence $x^2\bullet (y\bullet x)=(x^2 \bullet y)\bullet x=0,$ for all $x,y\in \mathcal{J}_{|x|}\times\mathcal{J}_{|y|}.$ \end{proof}
           In the following, we construct two new mock-Lie superalgebras: first, from two given superalgebras, and then from a mock-Lie superalgebra and an associative commutative superalgebra. The following two propositions have straightforward proofs.
           \begin{pro}
             Let $(\mathcal{J}_1=\{\mathcal{J}_1\}_{\bar{0}}\oplus \{\mathcal{J}_1\}_{\bar{1}},\bullet_1)$ and $(\mathcal{J}_2=\{\mathcal{J}_2\}_{\bar{0}}\oplus \{\mathcal{J}_2\}_{\bar{1}},\bullet_2)$ be mock-Lie superalgebras. Then $\mathcal{J}=\mathcal{J}_1\oplus\mathcal{J}_2=\mathcal{J}_{\bar{0}}\oplus\mathcal{J}_{\bar{1}}$ with $\mathcal{J}_{\bar{0}} =\{\mathcal{J}_1\}_{\bar{0}}\oplus \{\mathcal{J}_2\}_{\bar{0}}$ and $\mathcal{J}_{\bar{1}} =\{\mathcal{J}_1\}_{\bar{1}}\oplus \{\mathcal{J}_2\}_{\bar{1}},$ is a mock-Lie superalgebra with the product: $$(x_1\oplus y_1)\bullet(x_2\oplus y_2)=x_1\bullet_1 x_2\oplus y_1\bullet_2 y_2,\quad\forall x_1,x_2\in \mathcal{J}_1,y_1,y_2\in \mathcal{J}_2.$$
             \end{pro}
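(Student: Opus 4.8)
The plan is to verify directly that the diagonal product satisfies the two defining identities \eqref{S.Commt.Ident} and \eqref{S.Jacob.Ident}, exploiting the fact that the product carries no cross-terms between the two summands. First I would record that a homogeneous element of $\mathcal{J}$ of degree $\alpha$ has the form $x\oplus y$ with $x\in\{\mathcal{J}_1\}_{\alpha}$ and $y\in\{\mathcal{J}_2\}_{\alpha}$; in particular $|x\oplus y|=|x|=|y|$, so that in every sign factor $(-1)^{|\cdot||\cdot|}$ appearing below the degree of a summand may be read off from either of its two components interchangeably. Since $x_1\bullet_1 x_2\in\{\mathcal{J}_1\}_{|x_1|+|x_2|}$ and $y_1\bullet_2 y_2\in\{\mathcal{J}_2\}_{|y_1|+|y_2|}$, the product is even, establishing that $\mathcal{J}_i\bullet\mathcal{J}_j\subset\mathcal{J}_{i+j}$.

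For super-commutativity, I would compute $(x_1\oplus y_1)\bullet(x_2\oplus y_2)=x_1\bullet_1 x_2\oplus y_1\bullet_2 y_2$ and apply \eqref{S.Commt.Ident} in each of $(\mathcal{J}_1,\bullet_1)$ and $(\mathcal{J}_2,\bullet_2)$ separately. Because $|x_1|=|y_1|$ and $|x_2|=|y_2|$ for homogeneous arguments, the two component signs coincide with the single global sign $(-1)^{|x_1\oplus y_1||x_2\oplus y_2|}$, and the two equalities recombine into the required identity on $\mathcal{J}$.

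The super-Jacobi identity \eqref{S.Jacob.Ident} is handled in exactly the same manner: writing out the three cyclic terms for arguments $x_1\oplus y_1$, $x_2\oplus y_2$, $x_3\oplus y_3$ and using the diagonal form of the product, the left-hand side splits as the direct sum of the super-Jacobi expression for $(x_1,x_2,x_3)$ in $\mathcal{J}_1$ and that for $(y_1,y_2,y_3)$ in $\mathcal{J}_2$. Each vanishes by \eqref{S.Jacob.Ident} in the respective factor, so the total is $0\oplus 0=0$.

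The one point requiring a little care — and the only place a sign could go wrong — is the matching of exponents: one must confirm that replacing $|x_i\oplus y_i|$ by $|x_i|$ in the $\mathcal{J}_1$-component and by $|y_i|$ in the $\mathcal{J}_2$-component produces the identical coefficient on both summands, which is guaranteed by the homogeneity constraint $|x_i|=|y_i|$ noted above. Beyond this bookkeeping there is no genuine obstacle, since the absence of cross-terms means no mixed products of the form $x_i\bullet_1 y_j$ ever arise, and the verification reduces entirely to the two hypotheses on $(\mathcal{J}_1,\bullet_1)$ and $(\mathcal{J}_2,\bullet_2)$.
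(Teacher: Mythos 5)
Your verification is correct and is precisely the routine componentwise check that the paper itself omits (it simply declares this proposition to have a "straightforward proof"). The key observation that a homogeneous element $x\oplus y$ of the direct sum satisfies $|x|=|y|$, so that all sign factors agree on the two components, is exactly the point that makes the argument go through.
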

             \begin{pro}
                 Let $(\mathcal{J}=\mathcal{J}_{\bar{0}}\oplus \mathcal{J}_{\bar{1}},\bullet_1)$ be a mock-Lie superalgebra and $(\mathcal{A}=\mathcal{A}_{\bar{0}}\oplus\mathcal{A}_{\bar{1}},\cdot)$ be an associative commutative superalgebra. Then $\mathfrak{J}=\mathcal{J}\otimes\mathcal{A}=\mathfrak{J}_{\bar{0}}\oplus\mathfrak{J}_{\bar{1}}$ with $\mathfrak{J}_{\bar{0}} =\mathcal{J}_{\bar{0}}\otimes\mathcal{A}_{\bar{0}}\oplus\mathcal{J}_{\bar{1}}\otimes\mathcal{A}_{\bar{1}}$ and $\mathfrak{J}_{\bar{1}} =\mathcal{J}_{\bar{0}}\otimes\mathcal{A}_{\bar{1}}\oplus\mathcal{J}_{\bar{1}}\otimes\mathcal{A}_{\bar{0}},$ is a mock-Lie superalgebra with the product :
                 $$(x\otimes a)\bullet(y\otimes b)=x\bullet_1 y\otimes a\cdot b,\quad\forall x,y\in \mathcal{J},a,b\in \mathcal{A}.$$
                 \end{pro}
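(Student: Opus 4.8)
The plan is to verify directly that the product on $\mathfrak{J}=\mathcal{J}\otimes\mathcal{A}$ satisfies the two defining axioms of a mock-Lie superalgebra, namely super-commutativity \eqref{S.Commt.Ident} and the super-Jacobi identity \eqref{S.Jacob.Ident}, tested on homogeneous generators $X=x\otimes a$, $Y=y\otimes b$, $Z=z\otimes c$, where the degree of $x\otimes a$ is $\vert x\vert+\vert a\vert$. The guiding idea is that each axiom should decouple into the corresponding property of $(\mathcal{J},\bullet_1)$ in the first tensor factor and the commutativity/associativity of $(\mathcal{A},\cdot)$ in the second, so that the mock-Lie structure of $\mathcal{J}$ does the essential work while $\mathcal{A}$ only contributes a multiplicative scalar factor.

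For super-commutativity I would compute $(x\otimes a)\bullet(y\otimes b)=(x\bullet_1 y)\otimes(a\cdot b)$ and rewrite it using $x\bullet_1 y=(-1)^{\vert x\vert\vert y\vert}y\bullet_1 x$ together with $a\cdot b=(-1)^{\vert a\vert\vert b\vert}b\cdot a$ (the latter because $\mathcal{A}$ is a commutative superalgebra). Collecting the two signs and comparing with the sign $(-1)^{(\vert x\vert+\vert a\vert)(\vert y\vert+\vert b\vert)}$ required on $(y\otimes b)\bullet(x\otimes a)$ reduces the identity to a parity check on the exponents. Here the point to watch is that the $\mathbb{Z}_2$-graded (Koszul) sign rule governing the tensor product must be tracked carefully, so that the cross terms $\vert x\vert\vert b\vert+\vert a\vert\vert y\vert$ are accounted for correctly and the two sides match.

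For the super-Jacobi identity I would expand each of the three cyclic terms $X\bullet(Y\bullet Z)$, $Y\bullet(Z\bullet X)$, $Z\bullet(X\bullet Y)$ by applying the product twice. Since, for instance, $Y\bullet Z=(y\bullet_1 z)\otimes(b\cdot c)$ and the outer multiplication re-tensors this with $x\otimes a$, each term produces an $\mathcal{A}$-factor $a\cdot(b\cdot c)$, $b\cdot(c\cdot a)$, $c\cdot(a\cdot b)$ respectively; by associativity and commutativity of $\cdot$ these all reduce, up to sign, to the common factor $a\cdot b\cdot c$, which can then be pulled out. What remains in the $\mathcal{J}$-slot, once the prefactors $(-1)^{\vert X\vert\vert Z\vert}$ and their cyclic analogues are combined with the signs generated by commuting the $\mathcal{A}$-elements past the $\mathcal{J}$-elements, must coincide exactly with the left-hand side of \eqref{S.Jacob.Ident} for $x,y,z$ in $\mathcal{J}$, up to one overall common sign, and this vanishes because $(\mathcal{J},\bullet_1)$ is a mock-Lie superalgebra.

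The only genuine obstacle is this sign bookkeeping: one must confirm that the parities produced by commuting $\mathcal{A}$-elements past $\mathcal{J}$-elements combine precisely with the prescribed $\mathbb{Z}_2$-signs in \eqref{S.Commt.Ident} and \eqref{S.Jacob.Ident}, everything else being a formal consequence of bilinearity. A conceptually cleaner alternative, which sidesteps the explicit computation, is to invoke the general principle underlying the variety formalism of \cite{ZS}: tensoring an $\mathcal{M}$-superalgebra by a commutative associative superalgebra again yields an $\mathcal{M}$-superalgebra, since at the level of Grassmann envelopes this amounts to a scalar extension of an ordinary mock-Lie algebra by a commutative associative algebra, which stays within the variety. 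Applied to the variety of mock-Lie algebras, this delivers the statement directly.
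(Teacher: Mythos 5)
The paper offers no argument for this proposition (it is declared to have a ``straightforward proof''), so there is no official proof to compare against; your direct verification of \eqref{S.Commt.Ident} and \eqref{S.Jacob.Ident} on homogeneous tensors is the natural route, and your alternative via the Grassmann-envelope/variety formalism of \cite{ZS} is a legitimate, cleaner packaging of the same fact.

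However, the one step you defer --- the ``parity check on the exponents'' for super-commutativity --- is precisely the step that does not close for the product as literally written. With $(x\otimes a)\bullet(y\otimes b)=(x\bullet_1 y)\otimes(a\cdot b)$ and no further sign, one finds $(y\otimes b)\bullet(x\otimes a)=(-1)^{|x||y|+|a||b|}\,(x\otimes a)\bullet(y\otimes b)$, whereas super-commutativity in $\mathfrak{J}$ demands the sign $(-1)^{(|x|+|a|)(|y|+|b|)}$; the discrepancy is exactly the cross term $(-1)^{|x||b|+|a||y|}$ you single out, and it equals $-1$ for instance when $x\in\mathcal{J}_{\bar 0}$, $a\in\mathcal{A}_{\bar 1}$, $y\in\mathcal{J}_{\bar 1}$, $b\in\mathcal{A}_{\bar 0}$ with $x\bullet_1 y\neq 0\neq a\cdot b$. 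So the product must carry the Koszul sign, $(x\otimes a)\bullet(y\otimes b)=(-1)^{|a||y|}(x\bullet_1 y)\otimes(a\cdot b)$, and with that convention both \eqref{S.Commt.Ident} and \eqref{S.Jacob.Ident} do check out (in the Jacobi computation the three cyclic terms then acquire a common sign times the factor $a\cdot b\cdot c$ in the second slot, and the identity reduces to that of $\mathcal{J}$). Your write-up should either state this signed product explicitly and carry the sign through the computation, or run the Grassmann-envelope argument, which produces the correct signs automatically; as it stands, the assertion that ``the two sides match'' for the unsigned product is the gap.
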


          
\subsection{Representation of mock-Lie superalgebras}
In the following, we explore the representation theory of mock-Lie superalgebras, providing key examples known as the adjoint and dual representations. Additionally, we offer a characterization through the semi-direct product.
\begin{defi}
A \textbf{representation} of a mock-Lie  superalgebra $(\mathcal{J},\bullet)$ is a pair $(\mathcal{V},\pi)$ where $\mathcal{V}$ is a $\mathbb{Z}_{2}$-graded vector space and $\pi:\mathcal{J}\rightarrow End(\mathcal{V})$ is an even linear map such that the following condition holds, for all $x\in \mathcal{J}_{\vert x\vert},\, y\in \mathcal{J}_{\vert y\vert}$,
\begin{equation}\label{RepCond}
    \pi(x\bullet y)=-\pi(x)\pi(y)-(-1)^{ \vert x\vert \vert y\vert}\pi(y)\pi(x).
\end{equation}
\end{defi}
\begin{ex} \label{adjrep}\rm
Let $(\mathcal{J},\bullet)$ be a mock-Lie superalgebra. Then the even linear map $ad:\mathcal{J}\rightarrow End(\mathcal{J})$ given by: $ad(x)(y)=x\bullet y,~\forall x,y\in \mathcal{J}$, is a representation of $\mathcal{J}$, which is called the adjoint representation of $\mathcal{J}$.
\end{ex}
\begin{pro} \label{p18}
Let $(\mathcal{J},\bullet)$ be a mock-Lie superalgebra, $\mathcal{V}$ be a $\mathbb{Z}_{2}$-graded vector space and $\pi:\mathcal{J}\rightarrow End(\mathcal{V})$ be an even linear map. Then, the $\mathbb{Z}_{2}$-graded vector space $\overline{\mathcal{J}}=\mathcal{J}\oplus \mathcal{V}$ endowed with the product defined by: $$(x+u) \star (y+v)=x\bullet y+\pi(x)(v)+(-1)^{\vert x\vert \vert y\vert}\pi(y)(u),~\forall (x+u,y+v)\in \overline{\mathcal{J}}_{\vert x\vert}\times \overline{\mathcal{J}}_{\vert y\vert}$$ is a mock-Lie superalgebra if and only if $(\mathcal{V},\pi)$ is a representation of $\mathcal{J}$.
\end{pro}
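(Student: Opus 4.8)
The plan is to verify directly that the product $\star$ satisfies the two defining identities of a mock-Lie superalgebra, namely super-commutativity \eqref{S.Commt.Ident} and the super-Jacobi identity \eqref{S.Jacob.Ident}, and to isolate exactly which of these forces the representation condition \eqref{RepCond} on $\pi$. First I would note that $\star$ is an even product on $\overline{\mathcal{J}}=\mathcal{J}\oplus\mathcal{V}$ because $\bullet$ and $\pi$ are even, so for a homogeneous element $x+u$ one has $|x|=|u|$. Super-commutativity of $\star$ is then automatic: expanding $(x+u)\star(y+v)$ and $(-1)^{|x||y|}(y+v)\star(x+u)$, the two $\pi$-terms simply exchange roles while the $\mathcal{J}$-term $x\bullet y$ equals $(-1)^{|x||y|}y\bullet x$ by super-commutativity in $\mathcal{J}$. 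Hence \eqref{S.Commt.Ident} holds with no condition on $\pi$.

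The content lies in the super-Jacobi identity. Since the bilinear map $\star$ sends a pair into $\mathcal{J}\oplus\mathcal{V}$ with the $\mathcal{J}$-component depending only on $\bullet$, the super-Jacobi expression for $\star$ splits into a $\mathcal{J}$-valued part and a $\mathcal{V}$-valued part. The $\mathcal{J}$-valued part is precisely the super-Jacobi identity of $(\mathcal{J},\bullet)$, which holds by hypothesis, so it imposes nothing. Everything therefore reduces to the $\mathcal{V}$-valued part.

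Next I would compute that $\mathcal{V}$-component explicitly. Writing $X=x+u$, $Y=y+v$, $Z=z+w$ and expanding $X\star(Y\star Z)$, $Y\star(Z\star X)$, $Z\star(X\star Y)$ with their cyclic sign prefactors $(-1)^{|x||z|}$, $(-1)^{|x||y|}$, $(-1)^{|y||z|}$ (using $|Y\star Z|=|y|+|z|$, etc.), the $\mathcal{V}$-part collects into three groups according to whether $u$, $v$, or $w$ appears. By cyclic symmetry it suffices to analyze one group; for instance the coefficient of $u$ is, up to an overall sign $(-1)^{|x||y|}$, the operator $\pi(y\bullet z)+\pi(y)\pi(z)+(-1)^{|y||z|}\pi(z)\pi(y)$ applied to $u$. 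This vanishes for all $u$ exactly when \eqref{RepCond} holds (with $x,y$ replaced by $y,z$), and the coefficients of $v$ and $w$ give the same identity after relabeling. Since $u\in\mathcal{V}_{|x|}$, $v\in\mathcal{V}_{|y|}$, $w\in\mathcal{V}_{|z|}$ are arbitrary and their operator coefficients are independent, the vanishing of the whole $\mathcal{V}$-part is equivalent to \eqref{RepCond}, which yields both implications of the equivalence at once.

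The main obstacle is purely the Koszul-sign bookkeeping: one must carry the cyclic prefactors through each nested application of $\pi$ and of $\bullet$ and check that, after simplification, the three groups each line up with a single copy of \eqref{RepCond}. Organizing the computation cyclically, so that only the coefficient of $u$ is computed by hand while those of $v$ and $w$ are obtained via the substitution $(x,u)\mapsto(y,v)\mapsto(z,w)\mapsto(x,u)$, keeps this manageable and avoids redundant sign chases.
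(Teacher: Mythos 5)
Your proposal is correct and follows essentially the same route as the paper: a direct expansion of the cyclic super-Jacobi sum for $\star$, splitting it into the $\mathcal{J}$-valued part (which vanishes by the super-Jacobi identity of $\bullet$) and the $\mathcal{V}$-valued part, whose coefficients of $u$, $v$, $w$ are each $(-1)^{|x||y|}\bigl(\pi(y\bullet z)+\pi(y)\pi(z)+(-1)^{|y||z|}\pi(z)\pi(y)\bigr)$ and its cyclic relabelings, so that vanishing is equivalent to \eqref{RepCond}. Your cyclic organization of the sign bookkeeping is just a tidier presentation of the same computation.
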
 

\begin{proof}
Let  $x,y,z \in \mathcal{J}_{\vert x\vert}\times \mathcal{J}_{\vert y\vert}\times \mathcal{J}_{\vert z\vert}$ and  $u,v,w \in \mathcal{V}$ we have \begin{align*}
  &\circlearrowleft_{(x+u),(y+v),(z+w)}  (-1)^{|x||z|}(x+u)\star\big((y+v)\star(z+w)\big)\\=&(-1)^{|x||z|}(x+u)\star\big(y\bullet z+\pi(y)w+(-1)^{|z||y|}\pi(z)v\big)+
(-1)^{|y||x|}(y+v)\star\big(z\bullet x\\&+\pi(z)u+(-1)^{|z||x|}\pi(x)w\big)+(-1)^{|z||y|}(z+w)\star\big(x\bullet y+\pi(x)v+(-1)^{|y||x|}\pi(y)u\big)
\\=&(-1)^{|x||z|}\big(x\bullet (y\bullet z)+\pi(x)\pi(y)w+(-1)^{|z||y|}\pi(x)\pi(z)v+(-1)^{(|y|+|z|)|x|}\pi(y\bullet z)u\big)\\&
+(-1)^{|y||x|}\big(y\bullet (z\bullet x)+\pi(y)\pi(z)u+(-1)^{|z||x|}\pi(y)\pi(x)w+(-1)^{(|z|+|x|)|y|}\pi(z\bullet x)v\big)
\\&+(-1)^{|z||y|}\big(z\bullet (x\bullet y)+\pi(z)\pi(x)v+(-1)^{|y||x|}\pi(z)\pi(y)u+(-1)^{(|x|+|y|)|z|}\pi(x\bullet y)w\big)
\\=&(-1)^{|x||z|}x\bullet (y\bullet z)+(-1)^{|y||x|}y\bullet (z\bullet x)+(-1)^{|z||y|}z\bullet (x\bullet y)
+\big((-1)^{|x||z|}\pi(x)\pi(y)\\&+(-1)^{|y||x|+|z||x|}\pi(y)\pi(x)+(-1)^{ |z||x|}\pi(x\bullet y)\big)w
+\big((-1)^{|z||y|}\pi(z)\pi(x)+(-1)^{|x||z|+|y||z|}\pi(x)\pi(z)\\&+(-1)^{ |x||z|}\pi(z\bullet x)\big)v+\big((-1)^{|y||x|}\pi(y)\pi(z)+(-1)^{|y||z|+|x||y|}\pi(z)\pi(y)+(-1)^{ |x||y|}\pi(y\bullet z)\big)u\\
=&\big((-1)^{|x||z|}\pi(x)\pi(y)+(-1)^{|y||x|+|z||x|}\pi(y)\pi(x)+(-1)^{ |z||x|}\pi(x\bullet y)\big)w
+\big((-1)^{|z||y|}\pi(z)\pi(x)\\&+(-1)^{|x||z|+|y||z|}\pi(x)\pi(z)+(-1)^{ |x||z|}\pi(z\bullet x)\big)v
+\big((-1)^{|y||x|}\pi(y)\pi(z)+(-1)^{|y||z|+|x||y|}\pi(z)\pi(y)\\&+(-1)^{ |x||y|}\pi(y\bullet z)\big)u=0.\end{align*}
Then $(\overline{\mathcal{J}},\star)$ is a mock-Lie superalgebra if and only if $(\mathcal{V},\pi)$ is a representation of $\mathcal{J}$.
\end{proof}
\begin{defi}
Let $(\mathcal{J},\bullet)$ be a mock-Lie superalgebra and $(\mathcal{V},\pi)$ and $(\mathcal{V}',\pi')$ be two representations of $\mathcal{J}$. An even linear map $\Phi:\mathcal{V} \rightarrow \mathcal{V}'$ is called a morphism of representations if
\begin{equation}
    \pi'(x)\circ \Phi=\Phi \circ \pi(x),\;\forall x\in \mathcal{J}.
\end{equation}
If moreover $\Phi$ is bijective, then we say that $(\mathcal{V},\pi)$ and $(\mathcal{V}',\pi')$ are equivalent (or isomorphic).
\end{defi}
\begin{pro} \label{corep}
Let $(\mathcal{J}, \bullet)$ be a mock-Lie superalgebra and $(\mathcal{V},\pi)$ be a representation of $\mathcal{J}$. Let us consider the even linear map $\pi^*:\mathcal{J}\rightarrow End(\mathcal{V}^*)$ defined by:\[\pi^*(x)(f)=(-1)^{\vert f\vert \vert x\vert}f\circ \pi(x),~~\forall x\in \mathcal{J}_{\vert x\vert},~f\in \mathcal{J}^*_{\vert f\vert}~.\] Then, $(\mathcal{V}^*,\pi^*)$ is a representation of $\mathcal{J}$ on $\mathcal{V}^{*}$ which is called dual representation.
\end{pro}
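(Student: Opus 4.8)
The plan is to verify directly that $\pi^*$ satisfies the defining identity \eqref{RepCond} of a representation, namely that $\pi^*(x\bullet y)=-\pi^*(x)\pi^*(y)-(-1)^{\vert x\vert\vert y\vert}\pi^*(y)\pi^*(x)$ holds as endomorphisms of $\mathcal{V}^*$, by evaluating both sides on an arbitrary homogeneous $f\in\mathcal{V}^*_{\vert f\vert}$. Before that I would record that $\pi^*$ is even: since $\pi$ is even, $\pi(x)$ has degree $\vert x\vert$, so $f\circ\pi(x)$ has degree $\vert f\vert+\vert x\vert$, and hence $\pi^*(x)$ raises the degree of a functional by $\vert x\vert$, i.e. $\pi^*(x)\in\mathrm{End}(\mathcal{V}^*)_{\vert x\vert}$. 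This also fixes the bookkeeping that makes the rest of the computation work.

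For the left-hand side I would substitute the representation condition \eqref{RepCond} for $\pi$ into the definition of $\pi^*$, obtaining $\pi^*(x\bullet y)(f)=(-1)^{\vert f\vert(\vert x\vert+\vert y\vert)}f\circ\pi(x\bullet y)=-(-1)^{\vert f\vert(\vert x\vert+\vert y\vert)}f\circ\pi(x)\pi(y)-(-1)^{\vert f\vert(\vert x\vert+\vert y\vert)+\vert x\vert\vert y\vert}f\circ\pi(y)\pi(x)$. This reduces the whole identity to comparing coefficients of the two composite functionals $f\circ\pi(x)\pi(y)$ and $f\circ\pi(y)\pi(x)$.

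For the right-hand side the only delicate point — and the step I expect to be the main obstacle — is that the two factors of $\pi^*$ must be applied successively, and the Koszul sign in the second application uses the \emph{shifted} degree of the intermediate functional rather than $\vert f\vert$. Concretely, $\pi^*(y)(f)=(-1)^{\vert f\vert\vert y\vert}f\circ\pi(y)$ has degree $\vert f\vert+\vert y\vert$, so applying $\pi^*(x)$ produces the sign $(-1)^{(\vert f\vert+\vert y\vert)\vert x\vert}$, giving $\pi^*(x)\pi^*(y)(f)=(-1)^{\vert f\vert\vert y\vert+(\vert f\vert+\vert y\vert)\vert x\vert}f\circ\pi(y)\pi(x)$, and symmetrically $\pi^*(y)\pi^*(x)(f)=(-1)^{\vert f\vert\vert x\vert+(\vert f\vert+\vert x\vert)\vert y\vert}f\circ\pi(x)\pi(y)$. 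Plugging these into $-\pi^*(x)\pi^*(y)-(-1)^{\vert x\vert\vert y\vert}\pi^*(y)\pi^*(x)$ and using $2\vert x\vert\vert y\vert\equiv 0$, the exponents collapse to exactly those appearing on the left-hand side, matching the coefficients of $f\circ\pi(x)\pi(y)$ and $f\circ\pi(y)\pi(x)$ term by term. Since $f\in\mathcal{V}^*$ is arbitrary, this establishes \eqref{RepCond} for $\pi^*$, so that $(\mathcal{V}^*,\pi^*)$ is indeed a representation; the only real care needed throughout is the consistent tracking of these Koszul signs.
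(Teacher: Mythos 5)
Your proposal is correct and follows essentially the same route as the paper: both verify the identity \eqref{RepCond} for $\pi^*$ by evaluating each side on a homogeneous $f\in\mathcal{V}^*$ and reducing to the representation condition for $\pi$. If anything, your Koszul bookkeeping is the more careful one --- you correctly use the shifted degree $\vert f\vert+\vert y\vert$ when applying $\pi^*(x)$ to $\pi^*(y)(f)$, whereas the paper's displayed intermediate lines write the sign as $(-1)^{\vert f\vert\vert x\vert}$ with the compositions in the opposite order; the two bookkeepings differ only by a consistent relabelling of the two terms and lead to the same conclusion.
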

\begin{proof}
For all $ x,y \in \mathcal{J}_{\vert x\vert}\times \mathcal{J}_{\vert y\vert}$, and for all $f \in \mathcal{V}^*$, we get
\begin{align*}
&\big(-\pi^*(x)\pi^*(y)-(-1)^{|x||y|}\pi^*(y)\pi^*(x)\big)\circ f\\
=&-\pi^*(x)\pi^*(y)\circ f-(-1)^{|x||y|}\pi^*(y)\pi^*(x)\circ f\\
=&(-1)^{|f||y|}\pi^*(x)\circ f \circ \pi(y)-(-1)^{|x||y|}(-1)^{|f||x|}\pi^*(y)\circ f\circ \pi(x)\\
=&-(-1)^{|f||y|}(-1)^{|f||x|}f\circ \pi(x)\pi(y)-(-1)^{|x||y|}(-1)^{|f||x|}(-1)^{|f||y|}f\circ \pi(y)\circ \pi(x)\\
=&(-1)^{|f|(|x|+|y|)}f\circ(-\pi(x)\pi(y)-(-1)^{|x||y|}\pi(y)\pi(x))\\
=&(-1)^{|f||x\bullet y|}f\circ \pi(x\bullet y)\\
=&\pi^*(x\bullet y)(f).\end{align*}
Then  $(\mathcal{V}^*,\pi^*)$ is a representation of $\mathcal{J}$ on $\mathcal{V}^{*}.$
\end{proof}
\begin{cor} \label{corep}
Let $(\mathcal{J},\bullet)$ be a mock-Lie superalgebra and $(\mathcal{J},L)$ be the adjoint representation of $\mathcal{J}$. Let us consider the even linear map $L^*:\mathcal{J}\rightarrow End(\mathcal{J}^*)$ defined by: $L^*(x)(f)=(-1)^{\vert f\vert \vert x\vert}f\circ L(x),~~\forall x\in \mathcal{J}_{\vert x\vert}, \, f\in \mathcal{J}^*_{\vert f\vert}$. Then, $(\mathcal{J}^*,L^*)$ is a representation of $(\mathcal{J},\bullet)$ on $\mathcal{J}^*$ called the coadjoint representation of $\mathcal{J}$.
\end{cor}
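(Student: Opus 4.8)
The plan is to recognize this corollary as nothing more than the specialization of the dual-representation construction to the adjoint representation, so that no fresh computation is required. Concretely, I would combine two facts already established in the excerpt: first, by Example \ref{adjrep}, the even linear map $L=ad:\mathcal{J}\to End(\mathcal{J})$, $L(x)(y)=x\bullet y$, is a representation of $(\mathcal{J},\bullet)$ on the $\mathbb{Z}_2$-graded vector space $\mathcal{J}$ itself; second, Proposition \ref{corep} shows that for \emph{any} representation $(\mathcal{V},\pi)$ the dual $(\mathcal{V}^*,\pi^*)$ with $\pi^*(x)(f)=(-1)^{|f||x|}f\circ\pi(x)$ is again a representation.

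The single key step is therefore to set $\mathcal{V}=\mathcal{J}$ and $\pi=L$ in Proposition \ref{corep}. Under this identification $\mathcal{V}^*=\mathcal{J}^*$, and the general dual formula becomes
\[
\pi^*(x)(f)=(-1)^{|f||x|}f\circ L(x)=L^*(x)(f),
\]
which is exactly the map defined in the statement of the corollary. Since $(\mathcal{J},L)$ is a representation by Example \ref{adjrep}, Proposition \ref{corep} immediately yields that $(\mathcal{J}^*,L^*)$ satisfies the representation condition \eqref{RepCond}, namely $L^*(x\bullet y)=-L^*(x)L^*(y)-(-1)^{|x||y|}L^*(y)L^*(x)$ for all homogeneous $x,y\in\mathcal{J}$.

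There is essentially no obstacle here; the content of the corollary is purely that the adjoint representation, being a genuine representation, has a well-defined dual, which is the coadjoint representation. The only point worth stating explicitly is that $L$ is even (so that $L^*$ is even as well) and that $End(\mathcal{J})$, respectively $End(\mathcal{J}^*)$, carry the induced $\mathbb{Z}_2$-gradings making all the sign conventions consistent. If one wished to avoid invoking Proposition \ref{corep} as a black box, one could instead reproduce its verification verbatim with $\pi$ replaced by $L$, tracking the signs $(-1)^{|f||x|}$, $(-1)^{|f||y|}$, and $(-1)^{|x||y|}$ through the composition $f\circ L(x)\circ L(y)$; but this merely repeats the computation already carried out in the proof of Proposition \ref{corep}, so I would present the corollary as a direct application rather than redo it.
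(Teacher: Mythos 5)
Your proposal is correct and matches the paper's (implicit) reasoning exactly: the corollary is stated without proof precisely because it is the specialization of Proposition \ref{corep} (dual of a representation is a representation) to the adjoint representation of Example \ref{adjrep}. Nothing further is needed.
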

\quad In the following we introduce the notion of central extensions of mock-Lie superalgebras. An even linear map $\Omega:\mathcal{J}\times \mathcal{J}\rightarrow \mathcal{V}$ is called  a $2$-cocycle of $\mathcal{J}$ on a representation $(\mathcal{V},\pi)$ if 
 \begin{equation}\label{SuperCommCocycle}
     \Omega(x,y)=(-1)^{\vert x\vert \vert y\vert}\Omega(y,x),
 \end{equation}
\begin{equation}\label{CocycleCondition}
   \circlearrowleft_{x,y,z} \big((-1)^{\vert x\vert \vert z\vert}\Omega(x,y\bullet z)+(-1)^{\vert x\vert \vert z\vert}\pi(x)\Omega(y, z)\big)=0.\end{equation}

In this case, $(\mathfrak{A},\star_{\Omega})$ is called a central extension of $\mathcal{J}$ by $\mathcal{V}$ by means of $\Omega$. In the case where $\mathcal{V}=\mathbb{K},~\Omega$ is called an even scalar $2$-cocycle of $(\mathcal{J},\bullet)$. The vector
space of the even $2$-cocycles of $\mathcal{J}$ on the trivial $\mathcal{J}-$module $\mathcal{V}$ will be denoted by $(Z^2_{\rm MLie}(\mathcal{J},\mathcal{V}))_{\bar{0}}$.

Let $(\mathcal{J},\bullet)$ be a mock-Lie superalgebra, $\mathcal{V}$ be a $\mathbb{Z}_{2}$-graded vector space and $\Omega:\mathcal{J}\times \mathcal{J}\rightarrow \mathcal{V}$ be an even bilinear map. We define in the $\mathbb{Z}_{2}$-graded vector space $\mathfrak{J}:=\mathcal{J}\oplus \mathcal{V}$ the following product:\begin{equation}\label{TwistedSemiDirectProduct}
(x+u)\star_{\Omega}(y+v)=x\bullet y+\pi(x)(v)+(-1)^{\vert x\vert \vert y\vert}\pi(y)(u)+\Omega(x,y),    
\end{equation}
for all $ x,y\in \mathcal{J}_{\vert x\vert}\times \mathcal{J}_{\vert y\vert},~u,v\in \mathcal{V}.$
\begin{pro} \label{cent.ext}
With the above notation, 
the pair $(\mathfrak{J},\star_{\Omega})$ is a mock-Lie superalgebra if and only if $\Omega$ is a $2$-cocycle.
\end{pro}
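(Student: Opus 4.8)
The plan is to verify the two defining axioms of a mock-Lie superalgebra for $(\mathfrak{J},\star_{\Omega})$ directly from the formula \eqref{TwistedSemiDirectProduct}, namely super-commutativity \eqref{S.Commt.Ident} and the super-Jacobi identity \eqref{S.Jacob.Ident}, and to show that each one is governed by exactly one of the two defining conditions of a $2$-cocycle. First I note that $\star_{\Omega}$ is an even product, since $\bullet$, $\pi$ and $\Omega$ are all even, so the grading $\mathfrak{J}_{i}\star_{\Omega}\mathfrak{J}_{j}\subset\mathfrak{J}_{i+j}$ holds automatically and only the two identities remain to be checked.

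For super-commutativity I would compute $(x+u)\star_{\Omega}(y+v)$ and $(-1)^{|x||y|}(y+v)\star_{\Omega}(x+u)$ from \eqref{TwistedSemiDirectProduct}. The $\mathcal{J}$-components agree because $\bullet$ is super-commutative, and the two $\pi$-terms match after the reindexing forced by the prefactor $(-1)^{|x||y|}$; the only residual discrepancy is the $\mathcal{V}$-valued expression $\Omega(x,y)-(-1)^{|x||y|}\Omega(y,x)$. Hence $\star_{\Omega}$ is super-commutative if and only if \eqref{SuperCommCocycle} holds.

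For the super-Jacobi identity the key observation is that $\star_{\Omega}$ differs from the product $\star$ of Proposition \ref{p18} only by the additional summand $\Omega(x,y)\in\mathcal{V}$. Expanding the cyclic sum $\circlearrowleft_{(x+u),(y+v),(z+w)}(-1)^{|x||z|}(x+u)\star_{\Omega}\big((y+v)\star_{\Omega}(z+w)\big)$, I would split every term into those already appearing in the $\star$-computation of Proposition \ref{p18} and the genuinely new $\Omega$-contributions. The former constitute precisely the expression shown in \ref{p18} to vanish under the hypothesis that $(\mathcal{V},\pi)$ is a representation, so they drop out. Tracking the remaining terms, the inner product $(y+v)\star_{\Omega}(z+w)$ feeds its $\mathcal{V}$-part $\Omega(y,z)$ and its $\mathcal{J}$-part $y\bullet z$ into the outer slot, producing exactly two new summands: $\pi(x)\Omega(y,z)$ (the image of $\Omega(y,z)$ under $\pi(x)$) and $\Omega(x,y\bullet z)$ (the $\Omega$-value on $y\bullet z$). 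Collecting these with the prefactor $(-1)^{|x||z|}$ and taking the cyclic sum yields precisely the left-hand side of \eqref{CocycleCondition}, so the super-Jacobi identity for $\star_{\Omega}$ reduces to \eqref{CocycleCondition}.

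The main obstacle is purely bookkeeping: one must keep the degree conventions consistent when the second argument of $\star_{\Omega}$ is the homogeneous element $(y+v)\star_{\Omega}(z+w)$ of degree $|y|+|z|$, so that the sign $(-1)^{|x|(|y|+|z|)}$ attached to the $\pi(y\bullet z)u$-type terms is correct, and one must confirm that none of the $\Omega$-contributions is accidentally absorbed into the representation terms inherited from \ref{p18}. Once the $\Omega$-terms are cleanly isolated, the identification with \eqref{SuperCommCocycle} and \eqref{CocycleCondition} is immediate, and combining the two halves gives the claimed equivalence: $(\mathfrak{J},\star_{\Omega})$ is a mock-Lie superalgebra if and only if $\Omega$ is a $2$-cocycle.
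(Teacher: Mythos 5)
Your proposal is correct and follows essentially the same route as the paper: expand the cyclic sum for $\star_{\Omega}$, let the representation condition \eqref{RepCond} and the super-Jacobi identity of $\bullet$ annihilate the inherited terms from Proposition \ref{p18}, and observe that the surviving $\Omega$-contributions $(-1)^{|x||z|}\big(\Omega(x,y\bullet z)+\pi(x)\Omega(y,z)\big)$ summed cyclically are exactly \eqref{CocycleCondition}, while super-commutativity of $\star_{\Omega}$ reduces to \eqref{SuperCommCocycle}. Your write-up is in fact slightly more careful than the paper's on the commutativity half, which the paper only asserts in one line.
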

\begin{proof} By supercommutativity of ''$\bullet$'' and $\Omega$, we can check that ''$\star_{\Omega}$'' is supercommutative. $(\mathfrak{J},\star_{\Omega})$ is a mock-Lie superalgebra. According to \eqref{S.Commt.Ident} and \eqref{RepCond},
 for all $x,y,z \in \mathfrak{J}$ and for all $u,v,w \in \mathcal(V):$
 \begin{align*}
     &(-1)^{|x||z|}(x+u)\star_{\Omega}\big((y+v)\star_{\Omega}(z+w)\big)+(-1)^{|y||x|}(y+v)\star_{\Omega}\big((z+w)\star_{\Omega}(x+u)\big)\\
     &+(-1)^{|z||y|}(z+w)\star_{\Omega}\big((x+u)\star_{\Omega}(y+v)\big)\\
     =&(-1)^{|x||z|}(x+u)\star_{\Omega}\big(y\bullet z+\pi(y)w+(-1)^{|z||y|}\pi(z)v+\Omega(y,z)\big)\\
     &+(-1)^{|y||x|}(y+v)\star_{\Omega}\big(z\bullet x+\pi(z)u+(-1)^{|z||x|}\pi(x)w+\Omega(z,x)\big)\\
     &+(-1)^{|z||y|}(z+w)\star_{\Omega}\big(x\bullet y+\pi(x)v+(-1)^{|y||x|}\pi(y)u+\Omega(x,y)\big)\\
    = & (-1)^{|x||z|}\big(x\bullet (y\bullet z)+\pi(x)\pi(y)w+(-1)^{|z||y|}\pi(x)\pi(z)v+(-1)^{(|y|+|z|)|x|}\pi(y\bullet z)u+\pi(x)\Omega(y,z)\\&+\Omega(x,y\bullet z)\big)+(-1)^{|y||x|}\big(y\bullet (z\bullet x)+\pi(y)\pi(z)u+(-1)^{|z||x|}\pi(y)\pi(x)w+(-1)^{(|z|+|x|)|y|}\pi(z\bullet x)v\\
    &+\pi(y)\Omega(z,x)+\Omega(y,z\bullet x)\big)+(-1)^{|z||y|}\big(z\bullet (x\bullet y)+\pi(z)\pi(x)v+(-1)^{|y||x|}\pi(z)\pi(y)u\\&+(-1)^{(|x|+|y|)|z|}\pi(x\bullet y)w+\pi(z)\Omega(x,y)+\Omega(z,x\bullet y)\big)\\
    =&(-1)^{|x||z|}x\bullet (y\bullet z)+(-1)^{|y||x|}y\bullet (z\bullet x)+(-1)^{|z||y|}z\bullet (x\bullet y)\\
 &+\big(\pi(x\bullet y)+\pi(x)\pi(y)+(-1)^{|y||x|}\pi(y)\pi(x)\big)w+(-1)^{|x||z|}\pi(x)\Omega(y,z)+(-1)^{|x||z|}\Omega(x,y\bullet z)\\
&+\big(\pi(z\bullet x)+\pi(z)\pi(x)+(-1)^{|x||z|}\pi(x)\pi(z)\big)v+(-1)^{|y||x|}\pi(y)\Omega(z,x)+(-1)^{|y||x|}\Omega(y,z\bullet x)\\
&+\big(\pi(y\bullet z)+\pi(y)\pi(z)+(-1)^{|z||y|}\pi(z)\pi(y)\big)u+(-1)^{|z||y|}\pi(z)\Omega(x,y)+(-1)^{|z||y|}\Omega(z,x\bullet y)\\
=&(-1)^{|x||z|}\pi(x)\Omega(y,z)+(-1)^{|x||z|}\Omega(x,y\bullet z)+(-1)^{|y||x|}\pi(y)\Omega(z,x)+(-1)^{|y||x|}\Omega(y,z\bullet x)\\&+(-1)^{|z||y|}\pi(z)\Omega(x,y)+(-1)^{|z||y|}\Omega(z,x\bullet y).
\end{align*}
Then 
$(\mathfrak{J},\star_{\Omega})$ is a mock-Lie superalgebra if and only if $\Omega$ is a $2$-cocycle.
\end{proof}

\section{Pseudo-euclidean mock-Lie superalgebras}\label{section.3}
In this Section, we introduce the notion of mock-Lie superalgebra with an even supersymmetric and non-degenerate bilinear form called pseudo-euclidean  mock-Lie superalgebra and give some constructions and characterizations.
\begin{defi}
Let $(\mathcal{A},\cdot)$ be a non-associative superalgebra. A bilinear form $B$ on $\mathcal{A}$ is 
\begin{enumerate}
\item[\bf (i)] supersymmetric if $B(x,y)=(-1)^{\vert x\vert \vert y\vert} B(y,x)$, for all $x\in \mathcal{A}_{\vert x\vert}$ and $y\in \mathcal{A}_{\vert y\vert}$;
\item[\bf (ii)] skew-supersymmetric if $B(x,y)=-(-1)^{\vert x\vert \vert y\vert} B(y,x)$, for all $x\in \mathcal{A}_{\vert x\vert}$ and $y\in \mathcal{A}_{\vert y\vert}$;
\item[\bf (ii)] non-degenerate if $x \in \mathcal{A}$ satisfies $B(x,y)=0$, for any $y \in \mathcal{A}$, then $x=0$;
\item[\bf (iii)] even if $B(\mathcal{A}_{\alpha}, \mathcal{A}_{\beta})=\{0\}$, where $(\alpha, \beta)\in {\mathbb Z}_2\times {\mathbb Z}_2$ with $\alpha\neq \beta$.
\end{enumerate}
\end{defi}

\begin{defi}\label{pseudo-RiemannianSupe.alg}
Let $(\mathcal{A},\cdot)$ be a non-associative superalgebra and $B:\mathcal{A} \times \mathcal{A}\rightarrow \mathbb{K}$ be an even supersymmetric and non-degenerate bilinear form.
Then, we say that $\mathcal{A}$ admits a pseudo-riemannian metric (or structure) $B$ and $(\mathcal{A},\cdot,B)$ is
termed pseudo-riemannian non-associative superalgebra.
\end{defi}
\begin{defi}
A mock-Lie superalgebra $(\mathcal{J},\bullet)$ is called pseudo-euclidean if it is provided with a pseudo-riemannian structure $B$ which is invariant (or associative), i.e. $B(x\bullet y,z) = B(x,y\bullet z),~\forall x,y,z\in \mathcal{J}$. It is denoted by $(\mathcal{J},B)$
and $B$ is called an invariant scalar product on $\mathcal{J}$.
\end{defi}

Now, we are going to give some properties of pseudo-euclidean mock-Lie superalgebras.
\begin{defi} 
Let $(\mathcal{J},\bullet,B)$ be a pseudo-euclidean mock-Lie superalgebra and $\mathcal{I}$ be a graded ideal of $\mathcal{J}$. Then, we say that
\begin{itemize}
    \item [1)]$\mathcal{I}$ is non-degenerate if $B|_{\mathcal{I}\times \mathcal{I}}$ is non-degenerate.
    \item[2)] $\mathcal{I}$ is isotropic if $B(\mathcal{I},\mathcal{I})=0.$
\end{itemize}
\end{defi}
\begin{defi}
Let $(\mathcal{J},\bullet,B)$ be a pseudo-euclidean mock-Lie superalgebra. We say that $(\mathcal{J},\bullet,B)$ is $B-$irreductible if $\mathcal{J}$ contains no non-trivial non-degenerate graded ideal. 
\end{defi}
\begin{defi}
     Let $(\mathcal{J},\bullet,B)$ be a pseudo-euclidean mock-Lie superalgebra. $Ann(\mathcal{J})=\{x\in \mathcal{J}/~x\bullet y=0,~\forall y\in \mathcal{J}\}$ is called the annihilator of $\mathcal{J}.$ 
\end{defi}
\begin{pro}
 Let $(\mathcal{J},\bullet,B)$ be a pseudo-euclidean mock-Lie superalgebra and $\mathcal{I}$ be a graded ideal of $\mathcal{J}$. Then,
\begin{itemize}
\item[\bf (a)] $\mathcal{I}^{\bot}= \{x\in \mathcal{J},~B(x,u)= 0,~\forall u\in \mathcal{I}\}$ is a graded ideal of $\mathcal{J}$ and $\mathcal{I}\bullet \mathcal{I}^{\bot}=\mathcal{I}^{\bot}\bullet \mathcal{I}=\{0\}$. Moreover $\mathcal{I}^{\bot}\subseteq Ann_\mathcal{J}(\mathcal{I})=\{x\in \mathcal{J}/ x\bullet\mathcal{I}=0\}. $
\item[\bf (b)] If $\mathcal{I}$ is a non-degenerate graded ideal of $\mathcal{J}$, then $\mathcal{J}=\mathcal{I}\oplus \mathcal{I}^{\bot}$.
\end{itemize}
\end{pro}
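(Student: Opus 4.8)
The plan is to prove the two parts separately, relying on the invariance and supersymmetry of $B$ together with the super-commutativity of the product. For part \textbf{(a)}, I would first check that $\mathcal{I}^{\bot}$ is a graded subspace: since $B$ is even, an element $x=x_{\bar 0}+x_{\bar 1}$ lies in $\mathcal{I}^{\bot}$ if and only if each homogeneous component pairs trivially against the correspondingly-graded part of $\mathcal{I}$, so $\mathcal{I}^{\bot}=(\mathcal{I}^{\bot}\cap\mathcal{J}_{\bar 0})\oplus(\mathcal{I}^{\bot}\cap\mathcal{J}_{\bar 1})$. To see that $\mathcal{I}^{\bot}$ is an ideal, I would take homogeneous $x\in\mathcal{I}^{\bot}$, $z\in\mathcal{J}$ and $u\in\mathcal{I}$, and compute
\begin{equation*}
B(z\bullet x,u)=(-1)^{|z||x|}B(x\bullet z,u)=(-1)^{|z||x|}B(x,z\bullet u),
\end{equation*}
where the first equality is super-commutativity and the second is invariance. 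Since $\mathcal{I}$ is an ideal, $z\bullet u\in\mathcal{I}$, so the right-hand side vanishes; hence $z\bullet x\in\mathcal{I}^{\bot}$, establishing the ideal property.

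For the product relations, I would show $\mathcal{I}\bullet\mathcal{I}^{\bot}=\{0\}$ by testing against an arbitrary $z\in\mathcal{J}$: for homogeneous $u\in\mathcal{I}$, $x\in\mathcal{I}^{\bot}$, invariance gives $B(u\bullet x,z)=B(u,x\bullet z)$, and since $\mathcal{I}^{\bot}$ is an ideal $x\bullet z\in\mathcal{I}^{\bot}$, so the pairing is zero; non-degeneracy of $B$ then forces $u\bullet x=0$. The equality $\mathcal{I}^{\bot}\bullet\mathcal{I}=\{0\}$ follows from super-commutativity. The inclusion $\mathcal{I}^{\bot}\subseteq Ann_{\mathcal{J}}(\mathcal{I})$ is then immediate, since $x\bullet\mathcal{I}=\mathcal{I}^{\bot}\bullet\mathcal{I}$ when $x\in\mathcal{I}^{\bot}$.

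For part \textbf{(b)}, assuming $B|_{\mathcal{I}\times\mathcal{I}}$ is non-degenerate, I would argue that $\mathcal{I}\cap\mathcal{I}^{\bot}=\{0\}$: any $x$ in the intersection pairs trivially with all of $\mathcal{I}$, hence is zero by non-degeneracy of the restriction. A dimension count then gives $\mathcal{J}=\mathcal{I}\oplus\mathcal{I}^{\bot}$, using that $\dim\mathcal{I}^{\bot}=\dim\mathcal{J}-\dim\mathcal{I}$ for a non-degenerate ambient form in finite dimension. I expect the only subtle point to be the grading bookkeeping—ensuring that the even/odd decomposition is respected at each stage and that the $(-1)^{|z||x|}$ signs cancel correctly; because $B$ is even, the sign in the super-commutativity step and the sign hidden in invariance are compatible, so the computation closes without leftover factors. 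The rest is routine linear algebra, so the main (mild) obstacle is simply keeping the graded structure consistent throughout.
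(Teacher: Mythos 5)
Your proof is correct and follows essentially the same route as the paper: invariance of $B$ to push the product onto the ideal, non-degeneracy to kill the products, and the standard dimension count for part (b) (which the paper simply declares "clear"). The only cosmetic difference is that for $\mathcal{I}\bullet\mathcal{I}^{\bot}=\{0\}$ you move the product onto $\mathcal{I}^{\bot}$ (using that it is an ideal) while the paper moves it onto $\mathcal{I}$; both are the same invariance argument.
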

\begin{proof}
 ${\bf(a)}~$ Let $x\in \mathcal{I}^{\bot},~y\in \mathcal{J},~z\in \mathcal{I}$. By the associativity of $B$, we have $B(x\bullet y,z)=B(x,y\bullet z)=0$. So, $x\bullet y\in \mathcal{I}^{\bot}$. It follows that $\mathcal{I}^{\bot}$ is an ideal of $\mathcal{J}$. Since $\mathcal{I}$ is graded and $B$ is even, then $\mathcal{I}^{\bot}$ is graded. Moreover, $B(z\bullet x,y)=(-1)^{\vert x\vert \vert z\vert}B(x\bullet z,y)=(-1)^{\vert x\vert \vert z\vert}B(x,z\bullet y)=0$ because $B$ is non-degenerate, then $x\bullet z=z\bullet x=0$. Thus, $\mathcal{I}\bullet \mathcal{I}^{\bot}=\mathcal{I}^{\bot}\bullet \mathcal{I}=\{0\}$. Now let $y\in \mathcal{I}, z\in \mathcal{J}$ and $x\in \mathcal{I}^{\bot}$, then 
 $B(x\bullet y,z)=B(x,y\bullet z)=0.$
 Since $B$ is non-degenrate then $x\bullet y=0,$ then $x\in Ann_\mathcal{J}(\mathcal{I}).$
 The assertion ${\bf (b)}$ is clear.
 \end{proof}
 \begin{pro}
   Let $(\mathcal{J},\bullet,B)$ be a pseudo-euclidean mock-Lie superalgebra, and $B$ be even.
  \begin{itemize}
\item[\bf (a)] If $ \mathcal{F}=\{x\in \mathcal{J}_{\bar{1}}/x\bullet \mathcal{J}_{\bar{1}}=0\}.$ Then $\mathcal{F}\subseteq Ann(\mathcal{J}).$ 
\item[\bf (b)] If $(\mathcal{J},\bullet,B)$ is $B-$irreductible and $\mathcal{J}_{\bar{0}}\not =0$, then $\mathcal{J}_{\bar{1}}\bullet\mathcal{J}_{\bar{1}}=0$ iff $\mathcal{J}_{\bar{1}}=0.$
\end{itemize}
 \end{pro}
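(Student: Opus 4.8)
The plan is to prove \textbf{(a)} directly from the defining properties of $B$ (non-degeneracy, invariance, evenness, supersymmetry) together with supercommutativity of $\bullet$, and then to deduce \textbf{(b)} from \textbf{(a)} by exhibiting $\mathcal{J}_{\bar 0}$ as a non-degenerate graded ideal and invoking $B$-irreducibility.

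For \textbf{(a)}, I would fix $x\in\mathcal{F}$, so that $x\in\mathcal{J}_{\bar 1}$ and $x\bullet\mathcal{J}_{\bar 1}=0$, and show $x\bullet y=0$ for every homogeneous $y$. When $y\in\mathcal{J}_{\bar 1}$ this holds by definition of $\mathcal{F}$. When $y\in\mathcal{J}_{\bar 0}$, the element $x\bullet y$ is odd, so by non-degeneracy of $B$ it suffices to check $B(x\bullet y,z)=0$ for all $z$; since $B$ is even and $x\bullet y$ is odd, the pairing against even $z$ already vanishes, and I only need odd $z$. For $z\in\mathcal{J}_{\bar 1}$ I would compute, using invariance, $B(x\bullet y,z)=B(x,y\bullet z)$, then rewrite $y\bullet z=z\bullet y$ by supercommutativity (the sign is trivial because $\vert y\vert=\bar 0$), and apply invariance once more to obtain $B(x,z\bullet y)=B(x\bullet z,y)$. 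As $z$ is odd, $x\bullet z\in x\bullet\mathcal{J}_{\bar 1}=0$, so the expression vanishes. Non-degeneracy then forces $x\bullet y=0$, giving $x\in Ann(\mathcal{J})$ and hence $\mathcal{F}\subseteq Ann(\mathcal{J})$.

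For \textbf{(b)}, the implication $\mathcal{J}_{\bar 1}=0\Rightarrow\mathcal{J}_{\bar 1}\bullet\mathcal{J}_{\bar 1}=0$ is immediate. For the converse, I would assume $\mathcal{J}_{\bar 1}\bullet\mathcal{J}_{\bar 1}=0$, which means $\mathcal{F}=\mathcal{J}_{\bar 1}$, so part \textbf{(a)} yields $\mathcal{J}_{\bar 1}\subseteq Ann(\mathcal{J})$. The key observation is that this makes $\mathcal{J}_{\bar 0}$ a graded ideal: indeed $\mathcal{J}_{\bar 0}\bullet\mathcal{J}_{\bar 0}\subseteq\mathcal{J}_{\bar 0}$, while $\mathcal{J}_{\bar 1}\bullet\mathcal{J}_{\bar 0}=0\subseteq\mathcal{J}_{\bar 0}$ because every odd element now annihilates $\mathcal{J}$. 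Next I would use that $B$ is even and non-degenerate to split it into the two diagonal blocks $B|_{\mathcal{J}_{\bar 0}\times\mathcal{J}_{\bar 0}}$ and $B|_{\mathcal{J}_{\bar 1}\times\mathcal{J}_{\bar 1}}$, each of which is then separately non-degenerate; in particular $\mathcal{J}_{\bar 0}$ is a non-degenerate graded ideal. Since $\mathcal{J}_{\bar 0}\neq 0$ by hypothesis, $B$-irreducibility forbids $\mathcal{J}_{\bar 0}$ from being a non-trivial such ideal, so $\mathcal{J}_{\bar 0}=\mathcal{J}$ and therefore $\mathcal{J}_{\bar 1}=0$.

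The step requiring the most care is the chain of invariance and supercommutativity manipulations in \textbf{(a)}: one must justify the reduction to odd test vectors (this rests on evenness of $B$ combined with the parity of $x\bullet y$) and keep track of the signs, although here all relevant signs collapse because one of the factors is even. In \textbf{(b)} the only delicate point is the block-diagonalization of $B$ into its homogeneous components, which I would justify from the evenness of $B$ together with global non-degeneracy; the rest is a straightforward application of the definition of $B$-irreducibility.
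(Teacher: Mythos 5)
Your proposal is correct and follows essentially the same route as the paper: part \textbf{(a)} tests $x\bullet y$ against homogeneous elements using evenness, invariance and supercommutativity (your explicit chain $B(x,y\bullet z)=B(x,z\bullet y)=B(x\bullet z,y)=0$ actually supplies a step the paper leaves implicit), and part \textbf{(b)} applies \textbf{(a)} and $B$-irreducibility to a non-degenerate graded ideal. The only cosmetic difference is that in \textbf{(b)} you take $\mathcal{J}_{\bar 0}$ as that ideal, whereas the paper uses $\mathcal{J}_{\bar 1}$; both choices work for the same reasons.
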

 \begin{proof}
  ${\bf(a)}~$ Let $x\in \mathcal{F}, y\in\mathcal{J_{\bar{0}}}$ and let $z\in \mathcal{J}_{\bar{1}}$, then $y\bullet z \in \mathcal{J}_{\bar{1}}$, and we have $$B(x\bullet y,z)=B(x,y\bullet z)=0$$
  Now let $z\in\mathcal{J}_{\bar{0}},$ then $y\bullet z \in \mathcal{J}_{\bar{0}}$ and $B$ even, so we have $$B(x\bullet y,z)=B(x,y\bullet z)=0.$$
  Then $B(x\bullet y, \mathcal{J})=0$, hence $x\bullet y\in \mathcal{J}^{\bot}=0.$ So $x\in Ann_\mathcal{J}.$\\
  ${\bf(b)}~$ If $\mathcal{J}_{\bar{1}}\bullet\mathcal{J}_{\bar{1}}=0$, by ${\bf(a)}~$ $\mathcal{J}_{\bar{1}}\subseteq Ann_\mathcal{J}$. Then $\mathcal{J}_{\bar{1}}$ is a graded ideal of $\mathcal{J}$. Let $x\in \mathcal{J}_{\bar{1}}$ such that $B(x,\mathcal{J}_{\bar{1}})=0.$ Since $B$ is even then $B(x,\mathcal{J}_{\bar{0}})=0$. So $B(x,\mathcal{J})=0$, since $B$ is non-degenerate then $x=0.$
 \end{proof}
 \begin{pro}
Let $(\mathcal{J},\bullet)$ be a mock-Lie superalgebra. Then, $\mathcal{J}^{2^{\perp}}=$Ann$(\mathcal{J})$, where $\mathcal{J}^{2^{\perp}}:=\{x\in \mathcal{J}/~B(x,\mathcal{J}^{2})=\{0\}\}$.
\end{pro}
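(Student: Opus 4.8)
The plan is to prove the set equality $\mathcal{J}^{2^{\perp}}=\mathrm{Ann}(\mathcal{J})$ by a double inclusion, relying only on the two defining features of the scalar product: its invariance $B(a\bullet b,c)=B(a,b\bullet c)$ and its non-degeneracy. The guiding observation is that both sides encode the same vanishing condition, once the product is transferred from one slot of $B$ to the other. Note that although the statement says ``mock-Lie superalgebra,'' the appearance of $B$ means we work in a pseudo-euclidean mock-Lie superalgebra $(\mathcal{J},\bullet,B)$.

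First I would fix a homogeneous $x\in\mathcal{J}$ and unfold both memberships: $x\in\mathcal{J}^{2^{\perp}}$ means $B(x,y\bullet z)=0$ for all $y,z\in\mathcal{J}$, while $x\in\mathrm{Ann}(\mathcal{J})$ means $x\bullet y=0$ for all $y\in\mathcal{J}$. For the inclusion $\mathcal{J}^{2^{\perp}}\subseteq\mathrm{Ann}(\mathcal{J})$, I would take $x\in\mathcal{J}^{2^{\perp}}$ and apply invariance to get, for arbitrary $y,z$, the equality $B(x\bullet y,z)=B(x,y\bullet z)=0$. Since $z$ ranges over all of $\mathcal{J}$ and $B$ is non-degenerate, this forces $x\bullet y=0$; as $y$ was arbitrary, $x\in\mathrm{Ann}(\mathcal{J})$. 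For the reverse inclusion I would take $x\in\mathrm{Ann}(\mathcal{J})$ and again use invariance: for all $y,z$ one has $B(x,y\bullet z)=B(x\bullet y,z)=B(0,z)=0$, whence $B(x,\mathcal{J}^{2})=\{0\}$ and $x\in\mathcal{J}^{2^{\perp}}$.

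I do not expect a genuine obstacle here; the argument is a routine transfer-and-cancel. The only two points requiring care are that invariance is precisely what lets one move the factor $y$ between the two arguments of $B$, and that non-degeneracy is essential for the nontrivial inclusion (it is what converts ``$B(x\bullet y,z)=0$ for all $z$'' into ``$x\bullet y=0$''). The $\mathbb{Z}_2$-sign conventions play no role, because $B$ is even and no two odd factors are ever transposed in the computation, so the reasoning goes through without any degree bookkeeping.
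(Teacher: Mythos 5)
Your proposal is correct and follows essentially the same argument as the paper: both directions use the invariance identity $B(x\bullet y,z)=B(x,y\bullet z)$, with non-degeneracy of $B$ supplying the inclusion $\mathcal{J}^{2^{\perp}}\subseteq \mathrm{Ann}(\mathcal{J})$ and invariance alone giving the converse. Your remark that the statement implicitly assumes the pseudo-euclidean structure is also consistent with the paper's context.
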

\begin{proof}
    Let $x\in\mathcal{J}^{2^{\perp}},y\in \mathcal{J}.$ Since $B$ is invariant then $B(x\bullet y,z)=B(x,y\bullet z)=0,\quad\forall z\in\mathcal{J}.$ Since $B$ is non degenerate then $x\bullet y =0,$ Hence $x\in Ann(\mathcal{J}),$ and then $\mathcal{J}^{2^{\perp}}\subset Ann(\mathcal{J}).$ Conversely, it is clear that the fact that $B$ is invariant implies that $Ann(\mathcal{J)}\subset\mathcal{J}^{2^{\perp}}$. Then  $Ann(\mathcal{J)}=\mathcal{J}^{2^{\perp}}$
\end{proof}
In the following, we give some characterizations of pseudo-euclidean mock-Lie superalgebras.
\begin{pro}
Let $(\mathcal{J}=\mathcal{J}_{\bar{0}}\oplus \mathcal{J}_{\bar{1}},\bullet)$ be a mock-Lie superalgebra. Then, $\mathcal{J}$ is pseudo-euclidean if and only if the adjoint and the co-adjoint representations of $\mathcal{J}$ are equivalent and $dim(\mathcal{J}_{\bar{1}})$ is even.
\end{pro}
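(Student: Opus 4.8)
The plan is to set up an explicit correspondence between invariant scalar products on $\mathcal{J}$ and even bijective intertwiners $\mathcal{J}\to\mathcal{J}^*$, and to extract the parity constraint on $\mathcal{J}_{\bar 1}$ from the symmetry type of $B$ restricted to the odd part.

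\emph{($\Rightarrow$)} Assuming $(\mathcal{J},\bullet,B)$ is pseudo-euclidean, I would define $\Phi:\mathcal{J}\to\mathcal{J}^*$ by $\Phi(x)(y)=B(x,y)$. Since $B$ is even, $\Phi(x)$ annihilates $\mathcal{J}_{|x|+\bar 1}$, so $\Phi$ is even; since $B$ is non-degenerate $\Phi$ is injective, hence bijective because $\dim\mathcal{J}=\dim\mathcal{J}^*$. To see that $\Phi$ intertwines $L$ and $L^*$, I would first record, by combining invariance, supersymmetry and super-commutativity, the identity $B(x\bullet y,z)=(-1)^{|x||y|}B(y,x\bullet z)$. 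Evaluating $\big(L^*(x)\Phi(y)\big)(z)=(-1)^{|y||x|}B(y,x\bullet z)$ and comparing with $\Phi(x\bullet y)(z)=B(x\bullet y,z)$ then gives $L^*(x)\circ\Phi=\Phi\circ L(x)$, i.e. the adjoint and coadjoint representations are equivalent. Finally, on the odd part $B|_{\mathcal{J}_{\bar 1}\times\mathcal{J}_{\bar 1}}$ is non-degenerate (evenness forces $B(x,\mathcal{J}_{\bar 0})=0$ for $x\in\mathcal{J}_{\bar 1}$, so an odd radical vector would be $B$-orthogonal to all of $\mathcal{J}$) and alternating (supersymmetry with $|x|=|y|=\bar 1$ yields $B(x,y)=-B(y,x)$); a non-degenerate alternating form exists only in even dimension, whence $\dim\mathcal{J}_{\bar 1}$ is even.

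\emph{($\Leftarrow$)} Conversely, let $\Phi:\mathcal{J}\to\mathcal{J}^*$ be an even bijective map with $L^*(x)\circ\Phi=\Phi\circ L(x)$, and suppose $\dim\mathcal{J}_{\bar 1}$ is even. I would set $B(x,y):=\Phi(x)(y)$; evenness of $\Phi$ makes $B$ even and bijectivity makes it non-degenerate. For invariance I would use super-commutativity to write $\Phi(x\bullet y)=(-1)^{|x||y|}\Phi\big(L(y)x\big)=(-1)^{|x||y|}L^*(y)\Phi(x)$, and then unwind the definition of $L^*$: the two factors $(-1)^{|x||y|}$ cancel and one obtains $B(x\bullet y,z)=\Phi(x)(y\bullet z)=B(x,y\bullet z)$.

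The remaining and genuinely delicate point is to produce a \emph{supersymmetric} invariant scalar product, for the $B$ above need not be supersymmetric (already for an abelian $\mathcal{J}$ every even bijection $\Phi$ is an intertwiner). I would first pass to the supersymmetric part $B^{+}(x,y)=\tfrac12\big(B(x,y)+(-1)^{|x||y|}B(y,x)\big)$, checking by a short sign computation that $(x,y)\mapsto(-1)^{|x||y|}B(y,x)$ is again even and invariant, so that $B^{+}$ is even, supersymmetric, invariant, with a graded ideal as radical. The crux is the non-degeneracy of $B^{+}$, which is not automatic. I would control the symmetry defect by the even invertible intertwiner $N\in\mathrm{End}(\mathcal{J})_{\bar 0}$ defined by $(-1)^{|x||y|}B(y,x)=B(Nx,y)$, which satisfies $B(Nx,Ny)=B(x,y)$, and observe that an isometric square root $S$ of $N$ commuting with every $L(x)$ would make $B(Sx,y)$ a non-degenerate supersymmetric invariant form. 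The existence of such a square root over $\mathbb{K}$ is exactly where the parity hypothesis enters: it is guaranteed on the even block by the orthogonal-type contribution, while the evenness of $\dim\mathcal{J}_{\bar 1}$ is what permits the symplectic-type contribution on $\mathcal{J}_{\bar 1}$ to be split off. I expect this reconciliation of the symmetric structure on $\mathcal{J}_{\bar 0}$ with a compatible alternating structure on $\mathcal{J}_{\bar 1}$ to be the main obstacle of the proof.
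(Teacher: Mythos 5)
Your forward implication is complete and correct, and it is the standard argument: $\Phi(x)=B(x,\cdot)$ is an even bijective intertwiner, and the restriction of $B$ to $\mathcal{J}_{\bar 1}\times\mathcal{J}_{\bar 1}$ is non-degenerate and alternating, forcing $\dim\mathcal{J}_{\bar 1}$ to be even. (For what it is worth, the paper does not spell out its own proof; it only refers to the analogous statement for Malcev superalgebras in \cite{HS04}, so the comparison can only be with that standard scheme, which your first half matches.)

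The converse, however, contains a genuine gap, and you have located it yourself: from the intertwiner you only get an even, invariant, non-degenerate form $B$ that need not be supersymmetric, and the non-degeneracy of the supersymmetrization $B^{+}$ is exactly what remains to be proved. Your proposed repair does not close this gap. An isometric square root $S$ of the defect operator $N$ commuting with all multiplications need not exist over an arbitrary field $\mathbb{K}$ of characteristic $0$ (already over $\mathbb{Q}$ or $\mathbb{R}$ an invertible operator can fail to have a square root, let alone one that is an isometry and a module endomorphism), and you do not actually explain how the hypothesis that $\dim\mathcal{J}_{\bar 1}$ is even would enter; the final paragraph is a description of the difficulty rather than an argument. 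Note also that your strategy is purely linear-algebraic and ignores the structure that makes the statement true for mock-Lie superalgebras: combining invariance with supercommutativity shows that $B(x\bullet y,z)$ is super-totally-symmetric in $(x,y,z)$, hence \emph{every} invariant form is automatically supersymmetric on $\mathcal{J}^{2}\times\mathcal{J}$ and $\mathcal{J}\times\mathcal{J}^{2}$; consequently the super-skew part $B^{-}$ has $\mathcal{J}^{2}$ in its radical and ${\rm Rad}(B^{+})\subseteq{\rm Ann}(\mathcal{J})$. Any correct proof of the converse has to exploit this localization of the symmetry defect (together with the parity hypothesis on $\mathcal{J}_{\bar 1}$) to adjust $B^{+}$ on the annihilator; as written, your argument stops short of that and therefore does not establish the "if" direction.
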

\begin{proof}
The proof is similar to that in \cite{HS04}.
\end{proof}
\begin{pro}\label{ExistOddCentElmnt}
Let $(\mathcal{J}=\mathcal{J}_{\bar{0}}\oplus \mathcal{J}_{\bar{1}},\bullet,B)$ be a pseudo-euclidean Jordan superalgebra such that $\mathcal{J}_{\bar{0}}$ is a nilpotent Jordan algebra and $\mathcal{J}_{\bar{1}}\neq \{0\}$. Then $Ann(\mathcal{J})\cap \mathcal{J}_{\bar{1}}\neq \{0\}$.
\end{pro}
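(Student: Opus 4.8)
The plan is to produce the required nonzero odd element as a common kernel vector of the left multiplications by the even part, and then to use the invariant form to promote it to a genuine annihilator element.

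First I would record what the metric structure yields on the odd part. Since $B$ is even, supersymmetric and non-degenerate, its restriction to $\mathcal{J}_{\bar{1}}\times\mathcal{J}_{\bar{1}}$ is again non-degenerate, and because $B(x,y)=(-1)^{|x||y|}B(y,x)=-B(y,x)$ for $x,y\in\mathcal{J}_{\bar{1}}$, it is skew-symmetric; thus $\mathcal{J}_{\bar{1}}$ carries a symplectic form. Next I would reduce the statement to a single claim: it suffices to exhibit $0\neq v\in\mathcal{J}_{\bar{1}}$ with $\mathcal{J}_{\bar{0}}\bullet v=0$. Indeed, suppose such a $v$ exists. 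Supercommutativity gives $v\bullet a=a\bullet v=0$ for all $a\in\mathcal{J}_{\bar{0}}$, so $v\bullet\mathcal{J}_{\bar{0}}=0$. For $w\in\mathcal{J}_{\bar{1}}$ and $a\in\mathcal{J}_{\bar{0}}$, invariance and supercommutativity give $B(v\bullet w,a)=B(v,w\bullet a)=B(v,a\bullet w)=B(v\bullet a,w)=0$; since $v\bullet w\in\mathcal{J}_{\bar{0}}$ and $B$ is even and non-degenerate, this forces $v\bullet w=0$, hence $v\bullet\mathcal{J}_{\bar{1}}=0$. Therefore $v\in Ann(\mathcal{J})\cap\mathcal{J}_{\bar{1}}$, as wanted. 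This is the same mechanism underlying the inclusion $\mathcal{F}\subseteq Ann(\mathcal{J})$ established earlier, whose argument uses only invariance, evenness and non-degeneracy.

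The core of the proof is then to find this $v$ from the hypothesis that $\mathcal{J}_{\bar{0}}$ is nilpotent. I would consider the restricted left multiplications $L_a:=L_a|_{\mathcal{J}_{\bar{1}}}\in\mathrm{End}(\mathcal{J}_{\bar{1}})$ for $a\in\mathcal{J}_{\bar{0}}$, together with $\mathcal{M}_1:=\mathrm{span}\{L_a:a\in\mathcal{J}_{\bar{0}}\}$ and the associative subalgebra $\mathcal{M}\subseteq\mathrm{End}(\mathcal{J}_{\bar{1}})$ it generates, so that $\mathcal{J}_{\bar{0}}\bullet\mathcal{J}_{\bar{1}}=\mathcal{M}_1\mathcal{J}_{\bar{1}}$. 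The key claim is that this action is nilpotent, i.e. $\mathcal{M}_1^{N}\mathcal{J}_{\bar{1}}=0$ for some $N$. Granting it, pick the largest $k$ with $\mathcal{M}_1^{k}\mathcal{J}_{\bar{1}}\neq 0$ (possible, since $\mathcal{M}_1^{0}\mathcal{J}_{\bar{1}}=\mathcal{J}_{\bar{1}}\neq\{0\}$); then any $0\neq v\in\mathcal{M}_1^{k}\mathcal{J}_{\bar{1}}$ satisfies $\mathcal{M}_1 v\subseteq\mathcal{M}_1^{k+1}\mathcal{J}_{\bar{1}}=0$, that is $\mathcal{J}_{\bar{0}}\bullet v=0$, which is exactly the vector required above. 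Equivalently, nilpotency of the action gives $\mathcal{J}_{\bar{0}}\bullet\mathcal{J}_{\bar{1}}\subsetneq\mathcal{J}_{\bar{1}}$, and the symplectic non-degeneracy on $\mathcal{J}_{\bar{1}}$ then renders $(\mathcal{J}_{\bar{0}}\bullet\mathcal{J}_{\bar{1}})^{\perp}\cap\mathcal{J}_{\bar{1}}$ nonzero, which by the computation above lies inside $Ann(\mathcal{J})$.

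The main obstacle is proving the key claim, i.e. transferring nilpotency from the Jordan algebra $\mathcal{J}_{\bar{0}}$ to its module action on $\mathcal{J}_{\bar{1}}$. I would handle it in two steps. First, each generator is individually nilpotent: since $\mathcal{J}_{\bar{0}}$ is nilpotent, every $a\in\mathcal{J}_{\bar{0}}$ is a nilpotent element, and the Jordan identity together with its linearizations lets one express sufficiently high powers of $L_a$ as combinations of the operators $L_{a^{\bullet j}}$ with $j\geq 2$; as these vanish once $a^{\bullet j}=0$, each $L_a$ is a nilpotent endomorphism of $\mathcal{J}_{\bar{1}}$. Second, I would invoke the Engel-type theorem for (special) Jordan representations to upgrade ``every $L_a$ nilpotent'', for $a$ in the nilpotent algebra $\mathcal{J}_{\bar{0}}$, to nilpotency of the whole associative multiplication algebra $\mathcal{M}$, which gives $\mathcal{M}_1^{N}\mathcal{J}_{\bar{1}}=0$. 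This is the standard fact that the multiplication algebra of a finite-dimensional nilpotent Jordan algebra acting on a finite-dimensional module is nilpotent, and it is precisely the representation-theoretic input behind the solvable/nilpotent setting of \cite{Bak}. Finally, the degenerate case $\mathcal{J}_{\bar{0}}=\{0\}$ is immediate, since then $\mathcal{J}_{\bar{0}}\bullet\mathcal{J}_{\bar{1}}=\{0\}\subsetneq\mathcal{J}_{\bar{1}}$ and the whole of $\mathcal{J}_{\bar{1}}$ lies in $Ann(\mathcal{J})$.
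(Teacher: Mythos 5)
Your proposal is correct and follows essentially the same route as the paper: produce a nonzero $v\in\mathcal{J}_{\bar{1}}$ killed by $\mathcal{J}_{\bar{0}}$ via the nilpotency of the even part's action on the odd module, then use invariance, evenness and non-degeneracy of $B$ to conclude $v\bullet\mathcal{J}_{\bar{1}}=0$. The only difference is one of detail: the paper simply asserts the existence of such a $v$ in one line, whereas you justify it through the Engel-type nilpotency of the multiplication algebra of the nilpotent Jordan algebra $\mathcal{J}_{\bar{0}}$ on $\mathcal{J}_{\bar{1}}$, which is a legitimate (and welcome) filling-in of the step the paper leaves implicit.
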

\begin{proof}
Since $\mathcal{J}_{\bar{0}}$ is a nilpotent Jordan algebra and $\mathcal{J}_{\bar{1}}\neq \{0\}$, then there exists a non-zero element $X_{1}\in \mathcal{J}_{\bar{1}}$ such that $\mathcal{J}_{\bar{0}}\bullet X_{1}=\{0\}$. Moreover, the fact that $B$ is invariant implies that $\{0\}=B(\mathcal{J}_{\bar{0}}\bullet X_{1},\mathcal{J}_{\bar{1}})=
B(\mathcal{J}_{\bar{0}}, X_{1}\bullet \mathcal{J}_{\bar{1}})$. So, $X_{1}\bullet \mathcal{J}_{\bar{1}}=\{0\}$ because $B$ is even and non-degenerate. Consequently, $X_{1}\in$Ann$(\mathcal{J})\cap \mathcal{J}_{\bar{1}}$.
\end{proof}
Now, recall that in \cite{BakBen1SymJ.J.A} it has been proved that every mock-Lie algebra $(\mathcal{J},\bullet)$ is a nilpotent Jordan algebra. Consequently, invoking Proposition \ref{ExistOddCentElmnt} and Proposition \ref{Jacob-Jor.Is.Jor}, we get the following corollary
which will be very useful later on.
\begin{cor}\label{central-element}
Let $(\mathcal{J}=\mathcal{J}_{\bar{0}}\oplus \mathcal{J}_{\bar{1}},\bullet,B)$ be a pseudo-euclidean mock-Lie superalgebra such that  $\mathcal{J}_{\bar{1}}\neq \{0\}$. Then $Ann(\mathcal{J})\cap \mathcal{J}_{\bar{1}}\neq \{0\}$.
\end{cor}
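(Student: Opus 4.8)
The plan is to deduce the statement directly from Proposition \ref{ExistOddCentElmnt}, whose two hypotheses I would verify one at a time. The only content is to recognize $(\mathcal{J},\bullet,B)$ as a pseudo-euclidean Jordan superalgebra whose even part is a nilpotent Jordan algebra, after which the earlier proposition applies unchanged.

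First, by Proposition \ref{Jacob-Jor.Is.Jor} the mock-Lie superalgebra $(\mathcal{J},\bullet)$ is in particular a Jordan superalgebra. Since the bilinear form $B$ is the same one — even, supersymmetric, non-degenerate and invariant for the product $\bullet$ — the triple $(\mathcal{J},\bullet,B)$ is a pseudo-euclidean Jordan superalgebra in precisely the sense required by Proposition \ref{ExistOddCentElmnt}. The hypothesis $\mathcal{J}_{\bar{1}}\neq\{0\}$ is given.

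Second, I would check that the even part $\mathcal{J}_{\bar{0}}$, with the restriction of $\bullet$, is a nilpotent Jordan algebra. Because $\mathcal{J}_{\bar{0}}\bullet\mathcal{J}_{\bar{0}}\subseteq\mathcal{J}_{\bar{0}}$, the product restricts; and for homogeneous even elements all the sign factors $(-1)^{\vert x\vert\vert y\vert}$ equal $1$, so super-commutativity \eqref{S.Commt.Ident} reduces to ordinary commutativity and the super-Jacobi identity \eqref{S.Jacob.Ident} reduces to the ordinary Jacobi identity on $\mathcal{J}_{\bar{0}}$. Hence $(\mathcal{J}_{\bar{0}},\bullet)$ is a mock-Lie algebra, and by the result of \cite{BakBen1SymJ.J.A} recalled just above the corollary, every mock-Lie algebra is a nilpotent Jordan algebra; thus $\mathcal{J}_{\bar{0}}$ is nilpotent.

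With both hypotheses in hand, Proposition \ref{ExistOddCentElmnt} applies verbatim and yields $Ann(\mathcal{J})\cap\mathcal{J}_{\bar{1}}\neq\{0\}$, as claimed. I do not expect any genuine obstacle here, since the whole argument is a reduction of the corollary's hypotheses to those of Proposition \ref{ExistOddCentElmnt}; the single point that deserves a word of justification is that the even part inherits the mock-Lie structure, so that the nilpotency statement from \cite{BakBen1SymJ.J.A} can be invoked for $\mathcal{J}_{\bar{0}}$.
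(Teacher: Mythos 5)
Your proposal is correct and follows exactly the route the paper intends: it invokes Proposition \ref{Jacob-Jor.Is.Jor} to view $(\mathcal{J},\bullet,B)$ as a pseudo-euclidean Jordan superalgebra, uses the result of \cite{BakBen1SymJ.J.A} to get nilpotency of the even part (noting, as you rightly spell out, that $\mathcal{J}_{\bar{0}}$ is itself a mock-Lie algebra), and then applies Proposition \ref{ExistOddCentElmnt}. The paper states this derivation in one sentence without proof, so your only addition is the explicit, and correct, verification that the even part inherits the mock-Lie structure.
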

Our next step is to introduce the concept of $T^*-$extension briefly, which will be utilized to construct a bilinear form. . By the corollary \eqref{corep}, and the proposition \ref{cent.ext}, $\mathcal{J}\oplus\mathcal{J}^*$ is a mock-Lie superalgebra, $T^*-$extension of $\mathcal{J},$ with the product 
\begin{equation*}
(x+u)\star_{\Omega}(y+v)=x\bullet y+L^*(x)(v)+(-1)^{\vert x\vert \vert y\vert}L^*(y)(u)+\Omega(x,y).    
\end{equation*}
Consider the bilinear form $B$ in $\mathcal{J}\oplus\mathcal{J}^*$, for all $x,y\in \mathcal{J}_{\vert x\vert}\times \mathcal{J}_{\vert y\vert},~f,g\in \mathcal{J}^*,$
$$B(x+f,y+g)=f(y)+(-1)^{|x||y|}g(x).$$
Then we have the following proposition. 
\begin{pro}
    $(\mathcal{J}\oplus\mathcal{J}^*,B)$ is a pseudo-euclidean mock-Lie superalgebra if and only if $B$ is non-degenerate and $\Omega$ verifying
    \begin{equation}\label{supercycl-omg}
        \Omega(x,y)(z)=(-1)^{|x|(|y|+|z|)}\Omega(y,z)(x),
    \end{equation}
    forall $x,y,z\in \mathcal{J}_{\vert x\vert}\times \mathcal{J}_{\vert y\vert}\times\mathcal{J}_{\vert z\vert}.$
\end{pro}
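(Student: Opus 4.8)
The plan is to unwind the definition of ``pseudo-euclidean'' and sort its defining requirements into those that hold automatically and those that constrain $\Omega$. Since $(\mathcal{J}\oplus\mathcal{J}^*,\star_{\Omega})$ is already a mock-Lie superalgebra by Proposition \ref{cent.ext} (for $\Omega$ a $2$-cocycle), being pseudo-euclidean amounts to $B$ being even, supersymmetric, non-degenerate and invariant. First I would dispose of evenness and supersymmetry, which hold by construction: from $B(x+f,y+g)=f(y)+(-1)^{|x||y|}g(x)$ the summands $f(y)$ and $g(x)$ vanish unless $|f|=|y|$ and $|g|=|x|$, which forces the two arguments to share a degree (evenness), and supersymmetry then reduces to the single sign identity $(-1)^{|x||y|}(-1)^{|x||y|}=1$. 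Hence non-degeneracy survives as a genuine hypothesis, and the entire content of the proposition is the equivalence between invariance of $B$ and the super-cyclic identity \eqref{supercycl-omg}.

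The core step is to expand both sides of the invariance condition $B\big((x+u)\star_{\Omega}(y+v),\,z+w\big)=B\big(x+u,\,(y+v)\star_{\Omega}(z+w)\big)$ on homogeneous elements, using the product \eqref{TwistedSemiDirectProduct} with $L^*$ in place of $\pi$ together with the evaluation formula $[L^*(x)(f)](z)=(-1)^{|f||x|}f(x\bullet z)$ coming from the coadjoint representation. Each side then decomposes into one term of the form $\Omega(\cdot,\cdot)(\cdot)$ plus several terms linear in the dual components $u,v,w$. The crucial structural fact I would exploit is that in $\mathcal{J}\oplus\mathcal{J}^*$ a homogeneous element $x+u$ satisfies $|u|=|x|$, and similarly $|v|=|y|$, $|w|=|z|$; it is precisely these degree matchings that will make the dual-linear part collapse.

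I would then organize the expanded identity by which of $u,v,w$ each term contains. Setting $u=v=w=0$ isolates exactly the $\Omega$-terms, whose equality reads $\Omega(x,y)(z)=(-1)^{|x|(|y|+|z|)}\Omega(y,z)(x)$, i.e.\ \eqref{supercycl-omg}. For the remaining part, linear in $(u,v,w)$, I would compare coefficients pairwise and apply super-commutativity of $\bullet$ (so $x\bullet z=(-1)^{|x||z|}z\bullet x$, and likewise $x\bullet y=(-1)^{|x||y|}y\bullet x$) to rewrite, for instance, $w(x\bullet y)$ on the left into $w(y\bullet x)$ on the right, checking that the accumulated Koszul signs coincide once $|u|=|x|,|v|=|y|,|w|=|z|$ are imposed. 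Since the invariance condition is affine in $(u,v,w)$ --- a constant $\Omega$-part plus a linear part --- it holds if and only if both parts match, and the linear part matches identically; therefore invariance of $B$ is equivalent to \eqref{supercycl-omg}.

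The main obstacle is entirely the sign bookkeeping: one must carry the signs correctly through the three nested definitions of $\star_{\Omega}$, $L^*$ and $B$, and keep the degree constraints $|u|=|x|$, $|v|=|y|$, $|w|=|z|$ in force throughout, because it is exactly these constraints that cause the $u,v,w$-terms to cancel and leave \eqref{supercycl-omg} as the sole obstruction to invariance.
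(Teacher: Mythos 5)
Your proposal is correct and follows essentially the same route as the paper: both expand $B\big((x+f)\star_{\Omega}(y+g),z+h\big)$ and $B\big(x+f,(y+g)\star_{\Omega}(z+h)\big)$ via the coadjoint evaluation $[L^*(x)(f)](z)=(-1)^{|f||x|}f(x\bullet z)$, observe that the terms linear in the dual components coincide by supercommutativity and the degree matchings, and reduce invariance to the equality of the $\Omega$-terms, which is exactly \eqref{supercycl-omg}. Your extra organizational step of isolating the $\Omega$-part by setting the dual components to zero is a presentational refinement of the same computation.
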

\begin{proof}
    Clearly $B$ is supersymmetric. Let $x,y,z\in \mathcal{J}_{\vert x\vert}\times \mathcal{J}_{\vert y\vert}\times\mathcal{J}_{\vert z\vert}, f,g,h\in \mathcal{J}^*_{\vert f\vert}\times \mathcal{J}^*_{\vert g\vert}\times\mathcal{J}^*_{\vert h\vert}$
    \begin{align*}
        &B\big((x+f)\bullet_{\Omega}(y+g),z+h\big)\\
        &=B\big(x\bullet y+L^*(x)(g)+(-1)^{|x||y|}L^*(y)(f)+\Omega(x,y),z+h\big)\\
        &=L^*(x)(g)(z)+(-1)^{|x||y|}L^*(y)(f)+\Omega(x,y)(z)+(-1)^{|z|(|x|+|y|)}h(x\bullet y)\\
        &=(-1)^{|x||y|}g(x\bullet y)+f(y\bullet z)+(-1)^{|z|(|x|+|y|)}h(x\bullet y)+\Omega(x,y)(z).
    \end{align*}
    On the other hand,
    \begin{align*}
       &B\big((x+f),(y+g)\bullet_{\Omega}(z+h)\big)\\
       &=B\big(x+f,y\bullet z+L^*(y)(h)+(-1)^{|y||z|}L^*(z)(g)+\Omega(y,z)\big)\\
       &=f(y\bullet z)+(-1)^{|x|(|y|+|z|)}L^*(y)(h)(x)+(-1)^{|y||z|+|x|(|y|+|z|)}L^*(z)(g)(x)+(-1)^{|x|(|y|+|z|)}\Omega(y,z)\\
       &=f(y\bullet z)+(-1)^{|x|(|y|+|z|)+|y||z|}h(y\bullet x)+(-1)^{|x|(|y|+|z|)}g(z\bullet x)+(-1)^{|x|(|y|+|z|)}\Omega(y,z)\\
       &=f(y\bullet z)+(-1)^{|z|(|x|+|y|)}h(x\bullet y)+(-1)^{|x||y|}g(x\bullet y)+(-1)^{|x|(|y|+|z|)}\Omega(y,z).
    \end{align*}
    Then $B$ is invariant if and only if \eqref{supercycl-omg} holds.
\end{proof}

\section{Extensions of pseudo-euclidean mock-Lie superalgebras}\label{section.4}
In this section, we study double extensions of pseudo-euclidean mock-Lie superalgebras. And we introduce the notion of generalized double
extensions of mock-Lie superalgebra by the one dimentional odd mock-Lie superalgebra. 
\subsection{Doubles extensions}
\begin{defi}
    Let $(\mathcal{J},\bullet,B)$ be a pseudo-euclidean mock-Lie superalgebra and $D\in \mathfrak{AnDer}(\mathcal{J})$ of degree $\alpha$. We say that $D$ is supersymmetric with respect to $B$ if 
    $$B\big(D(x),y\big)=(-1)^{\alpha|x|}B\big(x,D(y)\big),\quad \forall x,y\in \mathcal{J}_{|x|}\times\mathcal{J}.$$ 
    The set of all supersymmetric anti-superderivations of $\mathcal{J}$ with respect to $B$ well be denoted $\mathfrak{AnDer}_s(\mathcal{J}).$
\end{defi}
\begin{thm}\label{centralex}
 Let $(\mathcal{J}_{1},\bullet_{1},B)$ be a pseudo-euclidean mock-Lie superalgebra, and $\phi:\mathcal{J}_{2}\rightarrow \mathfrak{AnDer}_{s}(\mathcal{J}_{1})$ a representation of mock-Lie superalgebras. Let $\varphi$ be the even map defined on $\mathcal{J}_{1}\times \mathcal{J}_{1}$ in ${\mathcal{J}_2}^{*}$ by $$\varphi(x,y)(a)=(-1)^{|x|(|y|+|a|)}B\big(y,\phi(a)(x)\big),$$  for all $x,y\in {{\mathcal{J}_{1}}_{|x|}}\times {{\mathcal{J}_{1}}_{|y|}}$ and $a\in {\mathcal{J}_{2}}_{|a|}$. Then $\mathcal{J}_{1}\oplus {\mathcal{J}_{2}}^*$ with the product 
 $$(x+f)\bullet_c(y+g)=x\bullet_1 y+\varphi(x,y),$$
 is a mock-Lie superalgebra. And $\phi$ can be extended to representation $\tilde{\phi}$ of $\mathcal{J}_{2}$ in $\mathfrak{AnDer}(\mathcal{J}_{1}\oplus {\mathcal{J}_{2}}^*)$ defined by 
 $$\tilde{\phi}(a)(x+f)=\phi(a)(x)+{L_2}^*(a)(f).$$
 Where ${L_2}^*$ is the coadjoint representation of $\mathcal{J}_2.$ 
\end{thm}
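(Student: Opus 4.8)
The plan is to recognize $\bullet_c$ as an instance of the central-extension product \eqref{TwistedSemiDirectProduct} in which $\mathcal{J}_1$ acts trivially on ${\mathcal{J}_2}^*$ (so the representation term $\pi$ is absent) and the even $2$-cochain is $\Omega=\varphi$. By Proposition \ref{cent.ext}, $(\mathcal{J}_1\oplus{\mathcal{J}_2}^*,\bullet_c)$ is then a mock-Lie superalgebra if and only if $\varphi$ satisfies the $2$-cocycle conditions \eqref{SuperCommCocycle} and \eqref{CocycleCondition}; with $\pi=0$ the latter collapses to the cyclic identity $\circlearrowleft_{x,y,z}(-1)^{|x||z|}\varphi(x,y\bullet_1 z)=0$. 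Thus the first assertion reduces to checking these two properties of $\varphi$.

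For the super-commutativity \eqref{SuperCommCocycle}, I would substitute the definition of $\varphi$ and move $\phi(a)$ across $B$: using supersymmetry of $B$ together with the fact that $\phi(a)\in\mathfrak{AnDer}_s(\mathcal{J}_1)$ is supersymmetric, both $\varphi(x,y)(a)$ and $(-1)^{|x||y|}\varphi(y,x)(a)$ reduce to $(-1)^{|y||a|}B(x,\phi(a)(y))$, which gives the identity. For the cyclic cocycle identity, after evaluating at $a$ each summand becomes $(-1)^{|x|(|y|+|a|)}B(y\bullet_1 z,\phi(a)(x))$; I would rewrite it by pushing $\phi(a)$ onto the product $y\bullet_1 z$ (supersymmetry of $B$ and of $\phi(a)$), expanding with the anti-superderivation rule $\phi(a)(y\bullet_1 z)=-\phi(a)(y)\bullet_1 z-(-1)^{|a||y|}y\bullet_1\phi(a)(z)$, and folding the resulting families back with the invariance of $B$. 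As in the ungraded situation, the cyclic sum then equals a nonzero scalar multiple of itself, so characteristic zero forces it to vanish; this step uses only invariance, supersymmetry and the anti-derivation property of $\phi(a)$, whereas the super-Jacobi identity \eqref{S.Jacob.Ident} of $\mathcal{J}_1$ is what makes the $\mathcal{J}_1$-component of the super-Jacobiator vanish inside Proposition \ref{cent.ext}. The sign bookkeeping in closing this self-referential cyclic relation is the first delicate point.

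For the extension $\tilde\phi$, I would argue component by component along $\mathcal{J}_1\oplus{\mathcal{J}_2}^*$. To see that each $\tilde\phi(a)$ is an anti-superderivation of $\bullet_c$, I compute $\tilde\phi(a)\big((x+f)\bullet_c(y+g)\big)=\phi(a)(x\bullet_1 y)+{L_2}^*(a)(\varphi(x,y))$ and compare it with $-\tilde\phi(a)(x+f)\bullet_c(y+g)-(-1)^{|a||x|}(x+f)\bullet_c\tilde\phi(a)(y+g)$. The $\mathcal{J}_1$-components match exactly because $\phi(a)$ is an anti-superderivation of $\bullet_1$, while the ${\mathcal{J}_2}^*$-components coincide provided
$$ {L_2}^*(a)(\varphi(x,y))=-\varphi(\phi(a)(x),y)-(-1)^{|a||x|}\varphi(x,\phi(a)(y)). $$
I would verify this by evaluating both sides on $b\in{\mathcal{J}_2}_{|b|}$: using the coadjoint formula ${L_2}^*(a)(h)(b)=(-1)^{|h||a|}h(a\bullet_2 b)$, the representation identity \eqref{RepCond} applied to $\phi(a\bullet_2 b)$, and the supersymmetry of $\phi(a)$ to rewrite $B(\phi(a)(y),\phi(b)(x))$, both sides collapse to $(-1)^{|a||y|+|x||y|+|x||b|}\big[-B(y,\phi(a)\phi(b)(x))-(-1)^{|a||b|}B(y,\phi(b)\phi(a)(x))\big]$. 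This cross-term identity is the second delicate point.

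Finally, to confirm that $\tilde\phi$ is a representation, I would check \eqref{RepCond} for $\tilde\phi$ directly: applying $\tilde\phi(a\bullet_2 b)$, $\tilde\phi(a)\tilde\phi(b)$ and $\tilde\phi(b)\tilde\phi(a)$ to $x+f$ keeps the $\mathcal{J}_1$- and ${\mathcal{J}_2}^*$-parts separate, so the required identity splits into \eqref{RepCond} for $\phi$ on the $\mathcal{J}_1$-part and \eqref{RepCond} for the coadjoint representation ${L_2}^*$ on the ${\mathcal{J}_2}^*$-part; both hold, the former by hypothesis and the latter by Corollary \ref{corep}. I expect the cross-term identity in the anti-superderivation check and the sign closure of the cyclic cocycle identity to be the only genuinely nontrivial steps, the remaining verifications being routine sign manipulations.
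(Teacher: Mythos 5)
Your proposal is correct and follows essentially the same route as the paper: the supersymmetry of $\varphi$ via $\phi(a)\in\mathfrak{AnDer}_s(\mathcal{J}_1)$, the cyclic identity for $\varphi$ via the anti-superderivation property and invariance of $B$ (the paper shows directly that each term equals minus the sum of the other two, which is the computation underlying your ``self-referential'' closure), and the cross-term identity ${L_2}^*(a)(\varphi(x,y))=-\varphi(\phi(a)(x),y)-(-1)^{|a||x|}\varphi(x,\phi(a)(y))$ verified by evaluating on $b$ with the representation property of $\phi$. Your use of Proposition \ref{cent.ext} to organize the first assertion, and your componentwise check that $\tilde\phi$ satisfies \eqref{RepCond}, are only minor streamlinings of what the paper does by direct expansion.
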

\begin{proof}
Let $x,y\in {\mathcal{J}_{1}}_{|x|}\times{\mathcal{J}_{1}}_{|y|}$, and $a\in{\mathcal{J}_{2}}_{|a|}$,
\begin{align*}
    \varphi(y,x)(a)&=(-1)^{|y|(|x|+|a|)}B\big(x,\phi(a)(y)\big)\\
    &=(-1)^{|y|(|x|+|a|)+|x||a|}B\big(\phi(a)(x),y\big)\\
    &=(-1)^{|x||y|+|a|(|x|+|y|)+|y|(|a|+|x|)}B\big(y,\phi(a)(x)\big)\\
    &=(-1)^{|x||y|+|a||x|+|x||y|}B\big(y,\phi(a)(x)\big)\\
    &=(-1)^{|x||y|}\Big((-1)^{|x|(|y|+|a|)}B\big(y,\phi(a)(x)\big)\Big)\\
    &=(-1)^{|x||y|}\varphi(x,y)(a).
\end{align*}
Then the product $\bullet_c$ is super-symmetric in the sense $$(x+f)\bullet_c(y+g)=(-1)^{|x||y|}(y+g)\bullet_c(x+f).$$
Let $x,y,z\in{\mathcal{J}_{1}}_{|x|}\times{\mathcal{J}_{1}}_{|y|}\times{\mathcal{J}_{1}}_{|z|}$, and $f,g,h\in {\mathcal{J}_{2}}^*_{|f|}\times{\mathcal{J}_{2}}^*_{|g|}\times{\mathcal{J}_{2}}_{|h|}$, we have \\
\begin{align*}
&\circlearrowleft_{x,y,z}(-1)^{|x||z|}\Big((x+f)\bullet_c\big((y+g)\bullet_c(z+h)\big)\Big)\\&= \circlearrowleft_{x,y,z}(-1)^{|x||z|}x\bullet_1(y\bullet_1z)+ \circlearrowleft_{x,y,z}(-1)^{|x||z|}\varphi(x,y\bullet_1z).
\end{align*}
Let $x,y\in {\mathcal{J}_{1}}_{|x|}\times{\mathcal{J}_{1}}_{|y|}$,and $a\in{\mathcal{J}_{2}}_{|a|}$,
\begin{align*}
    (-1)^{|x||z|}\varphi(x,y\bullet_1z)(a)
    &=(-1)^{|x||z|+|x|(|y|+|z|+|a|)}B\big(y\bullet_1z,\phi(a)(x)\big)\\
    &=(-1)^{|x||y|+|x||a|}B\big(y\bullet_1z,\phi(a)(x)\big)\\
    &=(-1)^{|x|(|y|+|a|)+|a|(|y|+|z|)}B\big(\phi(a)(y\bullet_1z),(x)\big)\\
    &=(-1)^{|x|(|y|+|a|)+|a|(|y|+|z|)}B\big(-\phi(a)(y)\bullet_1z-(-1)^{|a||y|}y\bullet_1\phi(a)(z),x\big)\\
    &=-(-1)^{|x|(|y|+|a|)+|a|(|y|+|z|)}B\big(\phi(a)(y)\bullet_1z,x\big)\\
    &-(-1)^{|x|(|y|+|a|)+|a|(|y|+|z|)+|a||y|}B\big(y\bullet_1\phi(a)(z)\big)\\
    &=-(-1)^{|x|(|y|+|a|)+|a|(|y|+|z|)}B\big(\phi(a)(y),z\bullet_1x\big)\\
    &-(-1)^{|x|(|y|+|a|)+|a||z|}B\big(y\bullet_1\phi(a)(z),x\big)\\
    &=-(-1)^{|y|(|z|+|a|)}B\big(z\bullet_1x,\phi(a)(y)\big)\\
    &-(-1)^{|x|(|y|+|a|)+|a||z|+|y|(|a|+|z|)}B\big(\phi(a)(z)\bullet_1y,x\big)\\
    &=-(-1)^{|x||y|}\varphi(y,z\bullet_1x)-(-1)^{|x|(|y|+|a|)+|a||z|+|y|(|a|+|z|)}B\big(\phi(a)(z),y\bullet_1x\big)\\
    &=-(-1)^{|x||y|}\varphi(y,z\bullet_1x)-(-1)^{|a|(|x|+|z|)+|y|(|a|+|z|)}B\big(\phi(a)(z),x\bullet_1y\big)\\
    &=-(-1)^{|x||y|}\varphi(y,z\bullet_1x)-(-1)^{|z||y|}\varphi(z,x\bullet_1y).   
\end{align*}
By the super-Jacobi identity of ${\mathcal{J}_{1}}$ and the last identity $\mathcal{J}_{1}\oplus {\mathcal{J}_{2}}^*$ is mock-Lie superalgebra.
Now let $x,y\in {\mathcal{J}_{1}}_{|x|}\times{\mathcal{J}_{1}}_{|y|},f,g\in{\mathcal{J}_{2}}^*_{|f|}\times{\mathcal{J}_{2}}^*_{|g|}$, and $a\in{\mathcal{J}_{2}}_{|a|}$\\
\begin{align*}
    &\tilde{\phi}(a)\big((x+f)\bullet_c(y+g)\big)=\tilde{\phi}(a)(x\bullet_1y+\varphi(x,y))={\phi}(a)(x\bullet_1y)+{L_2}^*(a)(\varphi(x,y)),
\end{align*}
\begin{align*}
    &\tilde{\phi}(a)(x+f)\bullet_c(y+g)=({\phi}(a)(x)+{L_2}^*(a)(f))\bullet_c(y+g)={\phi}(a)\bullet_1y+\varphi(\phi(a)(x),y)
\end{align*}
and
\begin{align*}
    &(x+f)\bullet_c\tilde{\phi}(a)(y+g)=(x+f)\bullet_c(\phi(a)(y)+{L_2}^*(a)(g)=x\bullet_1\phi(a)(y)+\varphi(x,\phi(a)(y)).
\end{align*}
Then $\tilde{\phi}$ is a super antiderivation if and only if $${L_2}^*(a)(\varphi(x,y))=-\varphi(\phi(a)(x),y)-(-1)^{|a||x|}\varphi(x,\phi(a)(y)).$$
Let $b\in {\mathcal{J}_{2}}$
\begin{align*}
    &{L_2}^*(a)(\varphi(x,y))(b)\\
    &=(-1)^{|a|(|x|+|y|)}\varphi(x,y)(a\bullet_2b)\\
    &=(-1)^{|a|(|x|+|y|)+|x|(|y|+|a|+|b|)}B\big(y,\phi(a\bullet_2b)(x)\big)\\
    &=(-1)^{|a|(|x|+|y|)+|x|(|y|+|a|+|b|)}B\big(y,-\phi(a)\phi(b)(x)-(-1)^{|b||a|}\phi(b)\phi(a)(x)\big)\\
    &=-(-1)^{|a|(|x|+|y|)+|x|(|y|+|a|+|b|)}B\big(y,\phi(a)\phi(b)(x)\big)\\&-(-1)^{|a|(|x|+|y|)+|x|(|y|+|a|+|b|)}B\big(y,(-1)^{|b||a|}\phi(b)\phi(a)(x)\big)\\
    &=-(-1)^{|a||x|+|x|(|a|+|y|+|b|)}B\big(\phi(a)(y),\phi(b)(x)\big)-(-1)^{(|x|+|a|)(|y|+|b|)}B\big(y,\phi(b)\phi(a)(x)\big)\\
    &=-(-1)^{|a||x|}\varphi(x,\phi(a)(y))(b)-\varphi(\phi(a)(x),y)(b).
    \end{align*}
Then $\tilde{\phi}$ is a anti superderivation. Clearly $\tilde{\phi}$ is a sum of tow representations then $\tilde{\phi}$ is a representation.
\end{proof}
\begin{lem}\label{prodint}
Let $(\mathcal{J}_1,\bullet_1)$ and $(\mathcal{J}_2,\bullet_2)$ be tow  mock-Lie superalgebras, and $\phi:J_{2}\rightarrow {AnDer}(\mathcal{J}_{1})$ an action of mock-Lie superalgebras. Then $\mathcal{J}_{1}\oplus {\mathcal{J}_{2}}$ is a mock-Lie superalgebras with the product 
$$(x+a) \bullet_{i}(y+b)=x\bullet_{1}y+\phi(a)(y)+ (-1)^{|x||y|}\phi(b)(x)+a\bullet_{2}b,$$
for all $ x,y\in {{\mathcal{J}_{1}}_{|x|}}\times {{\mathcal{J}_{1}}_{|y|}},a,b\in {{\mathcal{J}_{2}}_{|a|}}\times {{\mathcal{J}_{2}}_{|b|}}$.
Note that $|x|=|a|, |y|=|b|,|z|=|c|.$    The triple $(\mathcal{J}_2,\bullet_2,\pi)$ is called an action of $\mathcal J_1$ on $\mathcal J_2$. 
\end{lem}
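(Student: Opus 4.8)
The plan is to verify directly the defining axioms of a mock-Lie superalgebra for $(\mathcal{J}_1\oplus\mathcal{J}_2,\bullet_i)$ --- that $\bullet_i$ is even, super-commutative \eqref{S.Commt.Ident}, and satisfies the super-Jacobi identity \eqref{S.Jacob.Ident}. Evenness is immediate from a degree count, since with the convention $|x|=|a|$, $|y|=|b|$ each of the four summands $x\bullet_1 y$, $\phi(a)(y)$, $\phi(b)(x)$, $a\bullet_2 b$ has degree $|x|+|y|$. Throughout, I would use the two features encoded in the word ``action'': each $\phi(a)$ is an anti-superderivation of $\mathcal{J}_1$ of degree $|a|$, so $\phi(a)(y\bullet_1 z)=-\phi(a)(y)\bullet_1 z-(-1)^{|a||y|}y\bullet_1\phi(a)(z)$, and $\phi$ obeys the representation condition \eqref{RepCond} for $\bullet_2$, so $\phi(b\bullet_2 c)=-\phi(b)\phi(c)-(-1)^{|b||c|}\phi(c)\phi(b)$.

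Super-commutativity is short: expanding $(-1)^{|x||y|}(y+b)\bullet_i(x+a)$ and invoking super-commutativity of $\bullet_1$ and $\bullet_2$ turns $(-1)^{|x||y|}y\bullet_1 x$ into $x\bullet_1 y$ and $(-1)^{|x||y|}b\bullet_2 a$ into $a\bullet_2 b$, while the cross terms $\phi(a)(y)$ and $(-1)^{|x||y|}\phi(b)(x)$ merely exchange roles; this recovers $(x+a)\bullet_i(y+b)$.

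The core is the super-Jacobi identity. I would first compute $(y+b)\bullet_i(z+c)$, whose $\mathcal{J}_1$-part has degree $|y|+|z|$, then apply $(x+a)\bullet_i(-)$, letting the product act separately on the $\mathcal{J}_1$- and $\mathcal{J}_2$-valued components. After multiplying by $(-1)^{|x||z|}$ and forming the cyclic sum $\circlearrowleft_{x,y,z}$ (which rotates $(a,b,c)$ in step), the $\mathcal{J}_2$-component is exactly the super-Jacobi identity of $\mathcal{J}_2$ and vanishes, and the pure triple products $\circlearrowleft_{x,y,z}(-1)^{|x||z|}x\bullet_1(y\bullet_1 z)$ vanish by the super-Jacobi identity of $\mathcal{J}_1$. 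What survives in $\mathcal{J}_1$ are the $\phi$-terms: the single-$\phi$ terms $x\bullet_1\phi(b)(z)$ and $(-1)^{|y||z|}x\bullet_1\phi(c)(y)$; the nested term $\phi(a)(y\bullet_1 z)$; the composites $\phi(a)\phi(b)(z)$ and $(-1)^{|y||z|}\phi(a)\phi(c)(y)$; and the term $(-1)^{|x|(|y|+|z|)}\phi(b\bullet_2 c)(x)$.

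This mixed contribution is where the hypotheses on $\phi$ do their work and where the only real difficulty lies. I would expand $\phi(a)(y\bullet_1 z)$ by the anti-superderivation rule to match the single-$\phi$ terms, and expand $\phi(b\bullet_2 c)(x)$ by the representation rule to match the composite terms; after super-commutativity is used to bring factors into a common order, I expect each resulting summand to be cancelled by a partner produced under a different cyclic rotation, so that the whole mixed block is identically zero. The main obstacle is purely the sign bookkeeping: one must faithfully track the Koszul sign $(-1)^{|x||z|}$, the degree shift contributed by $\phi(a)$ having degree $|a|$, and the extra sign generated at each reordering by super-commutativity, and confirm that matched terms carry exactly opposite signs. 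Once both identities are established, $(\mathcal{J}_1\oplus\mathcal{J}_2,\bullet_i)$ is a mock-Lie superalgebra, as claimed.
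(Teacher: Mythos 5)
Your proposal is correct and follows essentially the same route as the paper: a direct verification in which the cyclic sum splits into the super-Jacobi identities of $\mathcal{J}_1$ and $\mathcal{J}_2$ plus mixed $\phi$-terms, which are then killed by the anti-superderivation property of each $\phi(a)$ and the representation condition \eqref{RepCond} for $\phi$. The paper simply organizes the surviving mixed terms into three anti-superderivation identities and three representation identities (one per cyclic slot), which is the same cancellation you describe, so only the routine sign bookkeeping remains to be written out.
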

\begin{proof}
   Let $ x,y\in {{\mathcal{J}_{1}}_{|x|}}\times {{\mathcal{J}_{1}}_{|y|}},a,b\in {{\mathcal{J}_{2}}_{|a|}}\times {{\mathcal{J}_{2}}_{|b|}}$.
   \begin{align*}
       &(-1)^{|x||z|}(x+a)\bullet_i\big((y+b)\bullet_i(z+c)\big)\\
       &=(-1)^{|x||z|}x\bullet_1(y\bullet_1z)+(-1)^{|x||z|}a\bullet_2(b\bullet_2c)+(-1)^{|x||z|}x\bullet_1\phi(b)(z)+(-1)^{|z|(|x|+|y|)}x\bullet_1\phi(c)(y)\\
       &+(-1)^{|x||z|}\phi(a)(y\bullet_1z)+(-1)^{|x||z|}\phi(a)\phi(b)(z)+(-1)^{|z|(|x|+|y|)}\phi(a)\phi(c)(y)+(-1)^{|y||x|}\phi(b\bullet_2c)(x),
   \end{align*}
    \begin{align*}
       &(-1)^{|y||x|}(y+b)\bullet_i\big((z+c)\bullet_i(x+a)\big)\\
       &=(-1)^{|y||x|}y\bullet_1(z\bullet_1x)+(-1)^{|y||x|}b\bullet_2(c\bullet_2a)+(-1)^{|y||x|}y\bullet_1\phi(c)(x)+(-1)^{|x|(|y|+|z|)}y\bullet_1\phi(a)(z)\\
       &+(-1)^{|y||x|}\phi(b)(z\bullet_1x)+(-1)^{|y||x|}\phi(b)\phi(c)(x)+(-1)^{|x|(|y|+|z|)}\phi(b)\phi(a)(z)+(-1)^{|z||y|}\phi(c\bullet_2a)(y).
   \end{align*}
   and
    \begin{align*}
       &(-1)^{|z||y|}(z+c)\bullet_i\big((x+a)\bullet_i(y+b)\big)\\
       &=(-1)^{|z||y|}z\bullet_1(x\bullet_1y)+(-1)^{|z||y|}c\bullet_2(a\bullet_2b)+(-1)^{|z||y|}z\bullet_1\phi(a)(y)+(-1)^{|y|(|z|+|x|)}z\bullet_1\phi(b)(x)\\
       &+(-1)^{|z||y|}\phi(c)(x\bullet_1y)+(-1)^{|z||y|}\phi(c)\phi(a)(y)+(-1)^{|y|(|z|+|x|)}\phi(c)\phi(b)(x)+(-1)^{|x||z|}\phi(a\bullet_2b)(z).
   \end{align*}
   By the super-Jacobi identity,
   \begin{align*}
      &(-1)^{|x||z|}x\bullet_1(y\bullet_1z)+ (-1)^{|y||x|}y\bullet_1(z\bullet_1x)+(-1)^{|z||y|}z\bullet_1(x\bullet_1y)=0,\\
      &(-1)^{|x||z|}a\bullet_2(b\bullet_2c)+(-1)^{|y||x|}b\bullet_2(c\bullet_2a)+(-1)^{|z||y|}c\bullet_2(a\bullet_2b)=0.
   \end{align*}
   Since $\phi$ is an anti superderivation,
   \begin{align*}
       &(-1)^{|z||y|}\phi(c)(x\bullet_1y)+(-1)^{|y||x|}y\bullet_1\phi(c)(x)+(-1)^{|z|(|x|+|y|)}x\bullet_1\phi(c)(y)=0,\\
       &(-1)^{|y||x|}\phi(b)(z\bullet_1x)+(-1)^{|x||z|}x\bullet_1\phi(b)(z)+(-1)^{|y|(|z|+|x|)}z\bullet_1\phi(b)(x)=0,\\
       &(-1)^{|x||z|}\phi(a)(y\bullet_1z)+(-1)^{|z||y|}z\bullet_1\phi(a)(y)+(-1)^{|x|(|y|+|z|)}y\bullet_1\phi(a)(z)=0.
   \end{align*}
   Since $\phi$ is a representation of mock-Lie superalgebras,
   \begin{align*}
     &(-1)^{|y||x|}\phi(b\bullet_2c)(x)+(-1)^{|y||x|}\phi(b)\phi(c)(x)+(-1)^{|y|(|z|+|x|)}\phi(c)\phi(b)(x)=0\\
     &(-1)^{|z||y|}\phi(c\bullet_2a)(y)+(-1)^{|z||y|}\phi(c)\phi(a)(y)+(-1)^{|z|(|x|+|y|)}\phi(a)\phi(c)(y)=0\\
     &(-1)^{|x||z|}\phi(a\bullet_2b)(z)+(-1)^{|x||z|}\phi(a)\phi(b)(z)+(-1)^{|x|(|y|+|z|)}\phi(b)\phi(a)(z)=0.
   \end{align*}
   Then the super-Jacobi identity holds.
\end{proof}
\begin{thm}\label{doubelex}
   With the hypothesis in the Theorem \ref{centralex}, $$ \mathcal{J}=\mathcal{J}_{2}\oplus \mathcal{J}_{1}\oplus{\mathcal{J}_{2}}^*,$$
 is a mock-Lie superalgebra  with the product  \begin{align*}
       (a+x+f)\bullet_d(b+y+g)=&a\bullet_{2} b+ x\bullet_{1} y+\phi(a)(y)+(-1)^{|x||y|}\phi(b)(x)\\&+{L_2}^*(a)(g)+(-1)^{|x||y|}{L_2}^*(b)(f)+\varphi(x,y)\end{align*} 
      for all $(a+x+f),(b+y+g)\in \mathcal{J}_{2}\oplus \mathcal{J}_{1}\oplus{\mathcal{J}_{2}}^*$. Note that  $|x|=|a|=|f|,|y|=|b|=|g|,|z|=|c|=|h|.$
   \end{thm}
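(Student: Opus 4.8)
The plan is to recognize the product $\bullet_d$ as an instance of the action (semidirect-type) product of Lemma \ref{prodint}, applied not to the pair $(\mathcal{J}_1,\mathcal{J}_2)$ directly but to the central extension furnished by Theorem \ref{centralex}. Concretely, write $\mathcal{C}:=\mathcal{J}_1\oplus{\mathcal{J}_2}^*$ equipped with the product $\bullet_c$. By Theorem \ref{centralex} this $\mathcal{C}$ is a mock-Lie superalgebra, and the extended map $\tilde{\phi}:\mathcal{J}_2\to\mathfrak{AnDer}(\mathcal{C})$, $\tilde{\phi}(a)(x+f)=\phi(a)(x)+{L_2}^*(a)(f)$, is a representation of $\mathcal{J}_2$ acting by anti-superderivations of $\mathcal{C}$, that is, an action of $\mathcal{J}_2$ on $\mathcal{C}$. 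Thus the hypotheses of Lemma \ref{prodint} are met with $\mathcal{C}$ playing the role of the first factor and $\mathcal{J}_2$ playing the role of the acting superalgebra along $\tilde{\phi}$.

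First I would apply Lemma \ref{prodint} verbatim to $\mathcal{C}$ and $\mathcal{J}_2$ along $\tilde{\phi}$, which immediately gives that $\mathcal{C}\oplus\mathcal{J}_2=\mathcal{J}_2\oplus\mathcal{J}_1\oplus{\mathcal{J}_2}^*$ is a mock-Lie superalgebra for the product
$$\big((x+f)+a\big)\bullet\big((y+g)+b\big)=(x+f)\bullet_c(y+g)+\tilde{\phi}(a)(y+g)+(-1)^{|x+f||y+g|}\tilde{\phi}(b)(x+f)+a\bullet_2 b.$$
Then I would check that this coincides with $\bullet_d$. Expanding $(x+f)\bullet_c(y+g)=x\bullet_1 y+\varphi(x,y)$ from Theorem \ref{centralex}, substituting $\tilde{\phi}(a)(y+g)=\phi(a)(y)+{L_2}^*(a)(g)$ and $\tilde{\phi}(b)(x+f)=\phi(b)(x)+{L_2}^*(b)(f)$, and using the degree convention $|x+f|=|x|$ (so that $(-1)^{|x+f||y+g|}=(-1)^{|x||y|}$), the right-hand side collapses exactly to $a\bullet_2 b+x\bullet_1 y+\phi(a)(y)+(-1)^{|x||y|}\phi(b)(x)+{L_2}^*(a)(g)+(-1)^{|x||y|}{L_2}^*(b)(f)+\varphi(x,y)$, which is the defining formula for $\bullet_d$.

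Since the two products agree term by term, the mock-Lie superalgebra structure produced by Lemma \ref{prodint} is precisely $(\mathcal{J},\bullet_d)$, completing the argument. The work here is essentially bookkeeping: super-commutativity and the super-Jacobi identity have already been discharged inside Theorem \ref{centralex} and Lemma \ref{prodint}, so no fresh cocycle or super-Jacobi computation is needed. The one point demanding care, and the only place a sign error could enter, is the consistent matching of the degree of a homogeneous element $x+f\in\mathcal{C}$ with $|x|$ throughout, which rests on the standing convention $|x|=|a|=|f|$ recorded in the statement; once this identification is made, the coincidence of the two products is forced.
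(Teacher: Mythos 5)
Your proposal is correct and follows essentially the same route as the paper's own proof, which likewise identifies $\bullet_d$ as the product of Lemma \ref{prodint} applied to the central extension $\mathcal{J}_1\oplus{\mathcal{J}_2}^*$ of Theorem \ref{centralex} with $\mathcal{J}_2$ acting through $\tilde{\phi}$. Your write-up merely makes the term-by-term matching of the two products more explicit than the paper does.
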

   \begin{proof}
     By Theorem \ref{centralex}, the representation of mock-Lie superalgebras $\tilde{\phi}$ is a super antiderivation of ${\mathcal{J}_{1}}\oplus{\mathcal{J}_{2}}$, then 
      \begin{align*}
       (a+x+f)\bullet_d
       (b+y+g)=a\bullet_{2} b+ (x+f)\bullet_c(y+g)+\tilde{\phi}(a)(y)+(-1)^{|x||y|}\tilde{\phi(b)}(x),\end{align*}
       by Lemma \ref{prodint}, $\mathcal{J}=\mathcal{J}_{2}\oplus \mathcal{J}_{1}\oplus{\mathcal{J}_{2}}^*$ is a mock-Lie superalgebra.
   \end{proof}
   \begin{pro}
     Let $\mathcal{J}$  be a mock-Lie superalgebra of Theorem \ref{doubelex}, let $\s$ be a supersymmetric invariant bilinear form in $\mathcal{J}_2$. Then the bilinear form defined on $\mathcal{J}$ by $$\tilde{B}(a+x+f,b+y+g)=B(x,y)+\s(a,b)+f(b)+(-1)^{|x||y|}g(a),$$
     is a supersymmetric invariant bilnear form on $\mathcal{J}$.
   \end{pro}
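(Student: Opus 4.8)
The plan is to verify the two properties asserted for $\tilde B$, namely supersymmetry and invariance. Throughout I use the convention of the statement that the three homogeneous components of an element share its degree, writing $|x|=|a|=|f|$, $|y|=|b|=|g|$ and $|z|=|c|=|h|$.

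\textbf{Supersymmetry} is a short computation. Expanding $(-1)^{|x||y|}\tilde B(b+y+g,a+x+f)$ gives $(-1)^{|x||y|}B(y,x)+(-1)^{|x||y|}\s(b,a)+(-1)^{|x||y|}g(a)+f(b)$. The first summand equals $B(x,y)$ by supersymmetry of $B$ on $\mathcal{J}_1$, the second equals $\s(a,b)$ by supersymmetry of $\s$ on $\mathcal{J}_2$, and the last two coincide with the cross terms $f(b)$ and $(-1)^{|x||y|}g(a)$ of $\tilde B(a+x+f,b+y+g)$; hence $\tilde B$ is supersymmetric.

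\textbf{Invariance} is the substantial part. I would expand $\tilde B\big((a+x+f)\bullet_d(b+y+g),c+z+h\big)$ and $\tilde B\big(a+x+f,(b+y+g)\bullet_d(c+z+h)\big)$ by first substituting the product of Theorem \ref{doubelex} and then the defining formula of $\tilde B$, obtaining eight scalar terms on each side. The term $\s(a\bullet_2 b,c)$ matches $\s(a,b\bullet_2 c)$ by invariance of $\s$, and $B(x\bullet_1 y,z)$ matches $B(x,y\bullet_1 z)$ by invariance of $B$. The term $(-1)^{|x||y|}B(\phi(b)(x),z)$ matches $B(x,\phi(b)(z))$ using the supersymmetry of the anti-superderivation $\phi(b)$, that is $B(\phi(b)(x),z)=(-1)^{|b||x|}B(x,\phi(b)(z))$. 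Finally the covector terms, namely the coadjoint contributions $L_2^*(a)(g)(c)$ and $(-1)^{|x||y|}L_2^*(b)(f)(c)$ together with the contraction $(-1)^{(|x|+|y|)|z|}h(a\bullet_2 b)$ on the left, are matched pairwise against their analogues on the right by unfolding $L_2^*(a)(g)(c)=(-1)^{|g||a|}g(a\bullet_2 c)$ and using supercommutativity of $\bullet_2$.

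The main obstacle will be the two genuinely mixed matchings, where a plain $B$-term on one side must be traced to a $\varphi$-term on the other. Concretely, $B(\phi(a)(y),z)$ on the left has to be identified with $(-1)^{|x|(|y|+|z|)}\varphi(y,z)(a)$ on the right, and the left-hand $\varphi(x,y)(c)$ with $(-1)^{|y||z|}B(x,\phi(c)(y))$; both require unfolding the definition $\varphi(x,y)(a)=(-1)^{|x|(|y|+|a|)}B(y,\phi(a)(x))$ and then using the supersymmetry of $B$ together with that of $\phi(a)$ and $\phi(c)$ to align the Koszul signs. Once these exponent computations are carried out all eight terms cancel pairwise, proving invariance; the care needed in tracking the signs is the only real difficulty, since every algebraic input --- invariance of $B$ and $\s$, supersymmetry of the $\phi(a)$, the coadjoint formula, and supercommutativity of $\bullet_2$ --- is already available from the earlier results.
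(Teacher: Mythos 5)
Your proposal is correct and follows essentially the same route as the paper: expand both sides of the invariance identity using the product of Theorem \ref{doubelex}, then match the eight terms pairwise via invariance of $B$ and $\s$, supersymmetry of the $\phi(a)$ with respect to $B$, the coadjoint formula, supercommutativity of $\bullet_2$, and the defining formula for $\varphi$. The sign bookkeeping you outline for the two mixed $B$/$\varphi$ matchings does work out (each Koszul exponent reduces modulo $2$ to the required one), so the plan carries through.
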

   \begin{proof} It is clear that  $\tilde{B}$ is supersymmetric. Let $x,y,z\in {\mathcal{J}_{1}}_{|x|}\times{\mathcal{J}_{1}}_{|y|}\times{\mathcal{J}_{1}}_{|z|},f,g,h\in{\mathcal{J}_{2}}^*_{|f|}\times{\mathcal{J}_{2}}^*_{|g|}\times{\mathcal{J}_{2}}^*_{|h|},a,b,c\in\times{\mathcal{J}_{2}}_{|a|}\times{\mathcal{J}_{2}}_{|b|}\times\in{\mathcal{J}_{2}}_{|c|},$ 
    \begin{align*}
        &\tilde{B}\big((a+x+f)\bullet_d(b+y+g),c+z+h\big)\\
        &=\tilde{B}\big(a\bullet_2b+x\bullet_1y+\phi(a)(y)+(-1)^{|x||y|}\phi(b)(x)\\&+{L_2}^*(a)(g)+(-1)^{|x||y|}{L_2}^*(b)(f)+\varphi(x,y),c+z+h\big)\\
        &=B\big(x\bullet_1y+\phi(a)(y)+(-1)^{|x||y|}\phi(b)(x),z\big)+\sigma(a\bullet_2b,c)\\
        &+{L_2}^*(a)(g)(c)+(-1)^{|x||y|}{L_2}^*(b)(f)(c)+\varphi(x,y)(c)+(-1)^{|z|(|x|+|y|)}h(a\bullet_2b)\\
        &=B(x\bullet_1y,z)+B(\phi(a)(y),z)+(-1)^{|x||y|}B(\phi(b)(x),z))+\sigma(a\bullet_2b,c)\\
        &+(-1)^{|x||y|}g(a\bullet_2c)+f(b\bullet_2c)+(-1)^{|x|(|y|+|z|)}B(y,\phi(c)(x))+(-1)^{|z|(|x|+|y|)}h(a\bullet_2b).
    \end{align*}
    \begin{align*}
        &\tilde{B}\big(a+x+f,(b+y+g)\bullet_d(c+z+h)\big)\\
        &=\tilde{B}\big(a+x+f,b\bullet_2c+y\bullet_1z+\phi(b)(z)+(-1)^{|y||z|}\phi(c)(y)\\&+{L_2}^*(b)(h)+(-1)^{|y||z|}{L_2}^*(c)(g)+\varphi(y,z)\big)\\
        &=B\big(x,y\bullet_1z+\phi(b)(z)+(-1)^{|y||z|}\phi(c)(y)\big)+\sigma(a,b\bullet_2c)+f(b\bullet_2c)\\&+(-1)^{|x|(|y|+|z|)}{L_2}^*(b)(h)(a)+(-1)^{|x|(|z||y|)+|y||z|}{L_2}^*(g)(a)+(-1)^{|x|(|y|+|z|)}\varphi(y,z)(a)\\
        &=B(x,y\bullet_1z)+B\big(x,\phi(b)(z)\big)+(-1)^{|y||z|}B\big(x,\phi(c)(y)\big)+\sigma(a,b\bullet_2c)+f(b\bullet_2c)\\&+(-1)^{|x|(|y|+|z|)+|y||z|}h(b\bullet_2a)+(-1)^{|x|(|y|+|z|)}g(c\bullet_2a)+(-1)^{|x|(|y|+|z|)+|y|(|x|+|z|)}B\big(z,\phi(a)(y)\big).\\
        &=B(x,y\bullet_1z)+(-1)^{|x||y|}B\big(\phi(b)(x),z\big)+(-1)^{|x|(|y|+|z|)}B\big(y,\phi(c)(x)\big)+\sigma(a,b\bullet_2c)\\
        &+f(b\bullet_2c)+(-1)^{|z|(|x|+|y|)}h(a\bullet_2b)+(-1)^{|x||y|}g(a\bullet_2c)+B\big(\phi(a)(y),z\big).
    \end{align*}
    Then $\tilde{B}$ is invariant.
   \end{proof}
\begin{rmk} 
 Let $(\mathcal{J},\bullet)$ be a mock-Lie superalgebra and $u\in\mathcal{J}$. Then $\mathcal{U}=\mathbb{K}u$ with a null product is the unique one-dimensional mock-Lie superalgebra. 
\end{rmk}
\subsection{Generalized double extensions}\label{section.5}
\begin{defi}
    Let $\mathcal{J}$ be a mock-Lie superalgebra,  $D\in {\mathfrak{AnDer}(\mathcal{J})}_{\bar{1}}$ such that $D^2(x)=0,$ forall $x\in \mathcal{J},$ and $x_0\in Ann(\mathcal{J})\cap\mathcal{J}_{\bar{0}}.$ We say that $(D,x_0)$ is an admissible pair of $\mathcal{J}$ if $D(x_0)=0.$
\end{defi}
\begin{lem}\label{gen-semi-prod-by-one-dim}
    Let $(\mathcal{J},\bullet,B)$ be a pseudo-euclidean mock-Lie superalgebra, $\mathcal{U}=\mathcal{U}_{\bar{1}}=\mathbb{K}u$ be the one-dimensional odd mock-Lie superalgebra and $(D,x_0)$ an admissible pair of $\mathcal{J}.$ Then the vector space $\bar{\mathcal{J}}=\mathcal{U}\oplus\mathcal{J}$ is a mock-Lie superalbegra with the product  
    \begin{align*}
        &u\bar{\bullet}u=x_0,\\
        &u\bar{\bullet}x=D(x),\\
        &x\bar{\bullet}y=x\bullet y,\quad \forall x,y \in \mathcal{J}_{|x|}\times\mathcal{J}_{|y|}.
    \end{align*}
    $\bar{\mathcal{J}}$ is called the generalized semi-direct product of $\mathcal{J}$ by the one-dimensional odd mock-Lie superalgebra $\mathcal{U}$ by means $(D,x_0).$   
\end{lem}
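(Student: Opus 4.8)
The plan is to verify the two defining identities of a mock-Lie superalgebra for the product $\bar\bullet$ on $\bar{\mathcal{J}}=\mathcal{U}\oplus\mathcal{J}$, namely super-commutativity \eqref{S.Commt.Ident} and the super-Jacobi identity \eqref{S.Jacob.Ident}. Since $\mathcal{U}=\mathbb{K}u$ is one-dimensional and spanned by the single odd element $u$, and every element of $\bar{\mathcal{J}}$ decomposes uniquely as $\lambda u + x$ with $\lambda\in\mathbb{K}$ and $x\in\mathcal{J}$, it suffices by bilinearity to check these identities on the generators, i.e. on triples drawn from $\{u\}\cup\mathcal{J}$. First I would record the grading: $|u|=\bar 1$, so $u\bar\bullet u = x_0$ is even (consistent with $x_0\in\mathcal{J}_{\bar 0}$), and $u\bar\bullet x=D(x)$ raises degree by $\bar 1$ (consistent with $D$ being odd); thus $\bar\bullet$ is even and the product is well-defined on the stated grading.

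For super-commutativity, the only nontrivial cases involve $u$. We have $u\bar\bullet u = x_0 = (-1)^{\bar1\cdot\bar1}u\bar\bullet u$ trivially since $x_0=x_0$, and for $x\in\mathcal{J}$ the relation $u\bar\bullet x=(-1)^{|u||x|}x\bar\bullet u$ just defines $x\bar\bullet u$ as $(-1)^{|x|}D(x)$; on $\mathcal{J}\times\mathcal{J}$ it reduces to the super-commutativity of $\bullet$, which holds by hypothesis. So super-commutativity is immediate and I would dispatch it in a line.

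The substantive work is the super-Jacobi identity, which I would check on the relevant generator triples. When all three arguments lie in $\mathcal{J}$ it is exactly the super-Jacobi identity of $(\mathcal{J},\bullet)$. The genuinely new cases are those containing one or more copies of $u$. For a triple $(u,u,u)$ the three cyclic terms involve $u\bar\bullet(u\bar\bullet u)=u\bar\bullet x_0=D(x_0)$; this is where the admissibility condition $D(x_0)=0$ is needed, making every term vanish. For a triple $(u,u,x)$ with $x\in\mathcal{J}$, terms of the form $u\bar\bullet(u\bar\bullet x)=u\bar\bullet D(x)=D^2(x)$ appear, together with $x\bar\bullet(u\bar\bullet u)=x\bar\bullet x_0 = x\bullet x_0$; here the hypotheses $D^2=0$ and $x_0\in Ann(\mathcal{J})$ (so $x\bullet x_0=0$) are precisely what force the identity. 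For a triple $(u,x,y)$ with $x,y\in\mathcal{J}$, the terms produce expressions like $u\bar\bullet(x\bar\bullet y)=D(x\bullet y)$ and $x\bar\bullet(u\bar\bullet y)=x\bullet D(y)$; collecting them and tracking the Koszul signs, the identity becomes exactly the anti-superderivation rule $D(x\bullet y)=-D(x)\bullet y-(-1)^{|D||x|}x\bullet D(y)$, which holds because $D\in\mathfrak{AnDer}(\mathcal{J})_{\bar1}$.

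The main obstacle is purely bookkeeping: in the mixed cases one must expand the cyclic sum in \eqref{S.Jacob.Ident} with the correct sign prefactors $(-1)^{|\cdot||\cdot|}$ using $|u|=\bar1$, and confirm that after substituting the product rules the surviving terms regroup into the three governing identities ($D(x_0)=0$, $D^2=0$ together with $x_0\in Ann(\mathcal{J})$, and the anti-superderivation property of $D$). I would therefore present the super-Jacobi verification case by case on generator triples, in each case reducing to one of these three hypotheses, and conclude that $(\bar{\mathcal{J}},\bar\bullet)$ is a mock-Lie superalgebra.
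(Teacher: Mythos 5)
Your overall strategy coincides with the paper's: the paper writes general homogeneous elements $X=\alpha u+x$, $Y=\beta u+y$, $Z=\gamma u+z$, expands the cyclic sum, and observes that the surviving terms are exactly the super-Jacobi identity of $\mathcal{J}$, the anti-superderivation identity for $D$, and the conditions $x\bullet x_0=0$ and $D(x_0)=0$; your generator-by-generator case analysis is the same computation organized by the coefficients $\alpha\beta\gamma$, $\alpha\beta$, $\alpha$, $1$. Your treatment of the super-Jacobi identity is therefore sound and matches the paper (one small remark: in the $(u,u,x)$ case the two $D^{2}$ terms actually cancel against each other once $x\bar\bullet u$ is defined as $(-1)^{|x|}D(x)$, so only $x_{0}\in Ann(\mathcal{J})$ is really used there; invoking $D^{2}=0$ is harmless but redundant).

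There is, however, a concrete error in your supercommutativity check for the pair $(u,u)$. You assert $u\bar\bullet u=(-1)^{\bar1\cdot\bar1}u\bar\bullet u$ ``trivially since $x_0=x_0$'', which tacitly takes $(-1)^{\bar1\cdot\bar1}=+1$. But $\bar1\cdot\bar1=\bar1$ and $(-1)^{\bar1}=-1$, so identity \eqref{S.Commt.Ident} evaluated at $x=y=u$ reads $x_0=-x_0$, i.e. $2x_0=0$, which over a field of characteristic $0$ forces $x_0=0$. In other words, a supercommutative product (unlike a super-antisymmetric Lie superbracket, where $[u,u]=-(-1)^{\bar1}[u,u]$ is vacuous) cannot assign a nonzero square to an odd element, so the case you dismiss as trivial is precisely the one that fails. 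To be fair, this defect is inherited from the statement itself, and the paper's own proof hides it behind the sentence ``The supercommutativity is obvious''; but your written justification is incorrect as it stands, and a complete argument must either restrict to $x_0=0$ (collapsing the construction to an ordinary semidirect product by $D$) or explain what sign convention is being used that makes $u\bar\bullet u=x_0$ compatible with \eqref{S.Commt.Ident}.
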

\begin{proof}
    The supercommutativity is obvious. Let $X=\alpha u +x\in \bar{\mathcal{J}}_{|X|}$, $Y=\beta u+y\in \bar{\mathcal{J}}_{|Y|}$ and $Z=\gamma u +z\in \bar{\mathcal{J}}_{|Z|},$ such that $|X|=|x|=|u|,|Y|=|y|=|u|$ and $|Z|=|z|=|u|.$ We get
    \begin{align*}
        &(-1)^{|X||Z|}X\bar{\bullet}(Y\bar{\bullet}Z)=(-1)^{|X||Z|}\Big(x\bullet (y\bullet z)-\gamma x\bullet D(y)+\beta x \bullet D(z)\\&+\beta\gamma x\bullet x_0+\alpha D(y\bullet z)-\alpha \gamma D^2(y)+\alpha \beta D^2(z)+\alpha \beta \gamma D(x_0)\Big).\\
        &(-1)^{|Y||X|}Y\bar{\bullet}(Z\bar{\bullet}X)=(-1)^{|Y||X|}\Big(y\bullet (z\bullet x)-\alpha y\bullet D(z)+\gamma y \bullet D(x)\\&+\gamma\alpha y\bullet x_0+\beta D(z\bullet x)-\beta \alpha D^2(z)+\beta \gamma D^2(x)+\alpha \beta \gamma D(x_0)\Big).\\
        &(-1)^{|Z||Y|}Z\bar{\bullet}(X\bar{\bullet}Y)=(-1)^{|Z||Y|}\Big(z\bullet (x\bullet y)-\beta z\bullet D(x)+\alpha z \bullet D(y)\\&+\alpha\beta z\bullet x_0+\gamma D(x\bullet y)-\gamma \beta D^2(x)+\gamma \alpha D^2(y)+\alpha \beta \gamma D(x_0)\Big).
    \end{align*}
    Since $\mathcal{J}$ is a mock-Lie superalgebra,
    \begin{align*}
        &(-1)^{|X||Z|}(x\bullet (y\bullet z))+(-1)^{|Y||X|}(y\bullet (z\bullet x))+(-1)^{|Z||Y|}(z\bullet (x\bullet y))=0.
    \end{align*}
   Since $D\in \mathfrak{AnDer(\mathcal{J)}}_{\bar{1}}$, then 
    \begin{align*}
        &D(y\bullet z)-y\bullet D(z)+z \bullet D(y)=0,\\
        &D(z\bullet x)-z\bullet D(x)+x\bullet D(z)=0,\\
        &D(x\bullet y)-x\bullet D(y)+y\bullet D(x)=0.
    \end{align*}
    Since  $(D,x_0)$ is an admissible pair of $\mathcal{J}$, then we have $ D(x_0)=0$ and $x\bullet x_0=0,\quad\forall x\in \mathcal{J}.$
    Therefore $\bar{\mathcal{J}}$ is a mock-Lie superalgebra.
\end{proof}
\begin{lem}\label{central-by-one}
   Let $(\mathcal{J},\bullet,B)$ be a pseudo-euclidean mock-Lie superalgebra, $\mathcal{U}=\mathcal{U}_{\bar{1}}=\mathbb{K}u$ be the one-dimensional odd mock-Lie superalgebra, and $\mathcal{U}^*$ its dual space. Let $D\in(\mathfrak{AnDer}(\mathcal{J}))_{\bar{1}}$ and $\varphi$ be a even linear map defined on $\mathcal{J}\times\mathcal{J}$ in $\mathcal{U}^*$ by
   $$\varphi(x,y)=B\big(y,D(x)\big),\quad\forall x,y\in \mathcal{J}.$$
   Then $\hat{\mathcal{J}}=\mathcal{J}\oplus\mathcal{U}^*$ with the product
   $$x\hat{\bullet}y=x\bullet y + \varphi(x,y)u^*,\quad \forall x,y\in\mathcal{J}\times\mathcal{J},$$
   is a mock-Lie superalgebra, central extension of $\mathcal{J}$ by $\mathcal{U}^*.$
\end{lem}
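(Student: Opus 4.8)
The plan is to read $\hat{\mathcal{J}}=\mathcal{J}\oplus\mathcal{U}^*$ as a central extension in the precise sense of Proposition \ref{cent.ext}, taking the representation of $\mathcal{J}$ on $\mathcal{V}=\mathcal{U}^*$ to be trivial ($\pi=0$) and the cocycle to be $\Omega(x,y)=\varphi(x,y)u^*=B(y,D(x))u^*$. With these choices the product $\star_\Omega$ of \eqref{TwistedSemiDirectProduct} specializes to $x\hat{\bullet}y=x\bullet y+\varphi(x,y)u^*$, since the $\pi$-terms disappear and $\mathcal{U}^*$ is central. Before anything else I would record that $\Omega$ is even: as $B$ is even, $B(y,D(x))$ can be nonzero only when $|y|=|D(x)|=|x|+\bar1$, i.e. when $|x|+|y|=\bar1=|u^*|$, which is exactly the evenness of $\Omega$. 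Granting this, Proposition \ref{cent.ext} reduces the entire statement to checking that $\Omega$ is a $2$-cocycle, i.e. to \eqref{SuperCommCocycle} and \eqref{CocycleCondition}; and since $\pi=0$, the cocycle condition \eqref{CocycleCondition} collapses to $\circlearrowleft_{x,y,z}(-1)^{|x||z|}\Omega(x,y\bullet z)=0$.

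The super-commutativity \eqref{SuperCommCocycle} of $\Omega$ is the scalar identity $B(y,D(x))=(-1)^{|x||y|}B(x,D(y))$. I would derive it by first using the supersymmetry of $B$ to write $B(y,D(x))=(-1)^{|y|(|x|+\bar1)}B(D(x),y)$, and then the symmetry of $D$ with respect to $B$ to exchange $B(D(x),y)$ for $B(x,D(y))$; collecting the signs on the support $|x|+|y|=\bar1$ produces exactly the factor $(-1)^{|x||y|}$. This step is short, but it is sign-sensitive and is the place where the compatibility of $D$ with the metric is genuinely used.

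The core of the proof, and the main obstacle, is the cocycle condition, i.e. the vanishing of the coefficient of $u^*$ in $\circlearrowleft_{x,y,z}(-1)^{|x||z|}\Omega(x,y\bullet z)$, namely of the cyclic sum $\displaystyle\sum_{\mathrm{cyc}}(-1)^{|x||z|}B(y\bullet z,D(x))$. My plan is to transfer $D$ off $x$ and onto the product: using the supersymmetry of $B$ and the invariance $B(a\bullet b,c)=B(a,b\bullet c)$ one rewrites each term so that the anti-superderivation identity $D(y\bullet z)=-D(y)\bullet z-(-1)^{|y|}y\bullet D(z)$ applies, and then pushes the remaining factor back across $B$ by invariance again. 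After expanding all three cyclic terms in this way, the six resulting summands (each carrying a single $D$) are paired off and cancelled using the supersymmetry of $\bullet$ and the invariance of $B$; the cancellation is ultimately controlled by the anti-superderivation property of $D$, which is itself an avatar of the super-Jacobi identity \eqref{S.Jacob.Ident} of $\mathcal{J}$. The only genuine difficulty is the bookkeeping of signs: one must simultaneously track the three cyclic permutation signs $(-1)^{|x||z|}$, the crossing signs produced each time $D$ passes a homogeneous factor, and the supersymmetry signs of $B$, and verify that the summands indeed cancel. Once \eqref{SuperCommCocycle} and \eqref{CocycleCondition} are established, Proposition \ref{cent.ext} immediately yields that $(\hat{\mathcal{J}},\hat{\bullet})$ is a mock-Lie superalgebra, and by construction it is the central extension of $\mathcal{J}$ by $\mathcal{U}^*$ claimed.
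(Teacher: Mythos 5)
Your reduction to Proposition \ref{cent.ext} with $\pi=0$ and $\Omega(x,y)=\varphi(x,y)u^*$ is the right frame (it is essentially the computation the paper delegates to Theorem \ref{centralex}), but the two verifications you defer are exactly where the argument breaks, and both of them fail for $\varphi(x,y)=B\big(y,D(x)\big)$ as printed. For the supercommutativity \eqref{SuperCommCocycle}: since $B$ is even and $D$ is odd, $\varphi(x,y)\neq 0$ forces $|x|+|y|=\bar{1}$ and hence $|x||y|=\bar{0}$; carrying out the two steps you describe gives
\begin{equation*}
B\big(y,D(x)\big)=(-1)^{|y|(|x|+\bar{1})}B\big(D(x),y\big)=(-1)^{|y|(|x|+\bar{1})+|x|}B\big(x,D(y)\big)=(-1)^{|x||y|+|x|+|y|}B\big(x,D(y)\big)=-B\big(x,D(y)\big),
\end{equation*}
not $+(-1)^{|x||y|}B\big(x,D(y)\big)$ as you assert --- you dropped the factor $(-1)^{|x|+|y|}=-1$. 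So $\varphi$ is skew precisely where \eqref{SuperCommCocycle} needs it symmetric, and $\hat{\bullet}$ is supercommutative only if $\varphi\equiv 0$, i.e.\ (by non-degeneracy of $B$) only if $D=0$. The cocycle condition fails too: for $x,y\in\mathcal{J}_{\bar{0}}$ and $z\in\mathcal{J}_{\bar{1}}$, the very moves you outline (move $D$ onto the product via its supersymmetry, expand with the anti-superderivation rule, push back with invariance) yield $B\big(x\bullet y,D(z)\big)=B\big(y\bullet z,D(x)\big)+B\big(z\bullet x,D(y)\big)$, so the cyclic sum equals $2B\big(x\bullet y,D(z)\big)=2B\big(D(x\bullet y),z\big)$ rather than $0$: two of your six summands reinforce instead of cancelling, and the residue vanishes only under the additional, non-automatic condition $D(\mathcal{J}_{\bar{0}}\bullet\mathcal{J}_{\bar{0}})=0$.

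The root cause is not your strategy but the formula: the lemma's cocycle is not the specialization of the one in Theorem \ref{centralex}. Taking $\mathcal{J}_2=\mathcal{U}=\mathbb{K}u$ with $|u|=\bar{1}$ and $\phi(u)=D$ there gives $\varphi(x,y)(u)=(-1)^{|x|(|y|+\bar{1})}B\big(y,D(x)\big)$, i.e.\ $(-1)^{|x|}B\big(y,D(x)\big)$ on the support, and with this extra sign (together with the hypothesis $D\in\mathfrak{AnDer}_{s}(\mathcal{J})_{\bar{1}}$, which you use tacitly but the statement omits --- it writes $\mathfrak{AnDer}$, not $\mathfrak{AnDer}_{s}$, and without self-adjointness even more is lost) both of your checks do go through. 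So the plan is salvageable only after correcting the cocycle; as written, your proposal asserts two sign identities that are false, and actually performing the bookkeeping you yourself flag as ``the only genuine difficulty'' would have exposed this.
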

\begin{proof}
    The proof is similar to the proof of Theorem \ref{centralex}.
\end{proof}
\begin{lem}\label{ext-pair}
 Let $(\mathcal{J},\bullet,B)$ be a pseudo-euclidean mock-Lie superalgebra, $\mathcal{U}=\mathcal{U}_{\bar{1}}=\mathbb{K}u$ be the one-dimensional odd mock-Lie superalbegra, and $\mathcal{U}^*$ its dual space. Let $(D,x_0)$ be an admissible pair of $\mathcal{J}$ such that $D$ is super symmetric with respect to $B$, and $B(x_0,x_0)=0.$ Then \begin{align*}
     &\hat{x}_0= x_0+\lambda u^*,\quad \lambda\in\mathbb{K},\\
     &\hat{D}(x+\alpha u^*)=D(x)-B(x_0,x)u^*,\quad \forall (x+\alpha u^*)\in\hat{\mathcal{J}},
 \end{align*} 
 is an admissible pair of $\hat{\mathcal{J}}.$
\end{lem}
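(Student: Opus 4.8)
The plan is to verify, for the pair $(\hat D,\hat x_0)$, the four defining conditions of an admissible pair of $\hat{\mathcal{J}}$: that $\hat D$ is an odd anti-superderivation of $\hat{\mathcal{J}}$, that $\hat D^2=0$, that $\hat x_0\in Ann(\hat{\mathcal{J}})$, and that $\hat D(\hat x_0)=0$. Two structural features of the central extension $\hat{\mathcal{J}}=\mathcal{J}\oplus\mathcal{U}^*$ of Lemma \ref{central-by-one} will drive every computation: the line $\mathcal{U}^*=\mathbb{K}u^*$ is central (it multiplies to zero with each element), and $\hat D$ annihilates $\mathcal{U}^*$, since $\hat D(x+\alpha u^*)=D(x)-B(x_0,x)u^*$ is independent of $\alpha$. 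Together these collapse any expression on $\hat{\mathcal{J}}$ to its $\mathcal{J}$-part together with a scalar coefficient of $u^*$.

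I would first dispatch the three conditions that follow almost directly from the hypotheses. For $\hat D(\hat x_0)=0$, compute $\hat D(x_0+\lambda u^*)=D(x_0)-B(x_0,x_0)u^*$ and use $D(x_0)=0$ (admissibility of $(D,x_0)$) together with the hypothesis $B(x_0,x_0)=0$. For $\hat x_0\in Ann(\hat{\mathcal{J}})$, note that the odd central summand $\lambda u^*$ annihilates everything for free, while for any $y+\beta u^*$ one has $x_0\,\hat\bullet\,(y+\beta u^*)=x_0\bullet y+\varphi(x_0,y)u^*$, with $x_0\bullet y=0$ because $x_0\in Ann(\mathcal{J})$ and $\varphi(x_0,y)=B(y,D(x_0))=0$ because $D(x_0)=0$; supercommutativity gives the other side. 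For $\hat D^2=0$, compute $\hat D^2(x+\alpha u^*)=D^2(x)-B(x_0,D(x))u^*$, apply $D^2=0$, and reduce the remaining coefficient by the supersymmetry of $D$ together with $D(x_0)=0$ and $|x_0|=\bar 0$: indeed $B(x_0,D(x))=B(D(x_0),x)=0$.

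The main work, and the step I expect to be the real obstacle, is verifying that $\hat D$ is an anti-superderivation, namely $\hat D(X\hat\bullet Y)=-\hat D(X)\hat\bullet Y-(-1)^{|X|}X\hat\bullet\hat D(Y)$ for homogeneous $X=x+\alpha u^*$, $Y=y+\beta u^*$. Using centrality of $u^*$ and the fact that $\hat D$ kills $u^*$, both sides split into a $\mathcal{J}$-component and a scalar $u^*$-component. The $\mathcal{J}$-component is exactly the relation $D(x\bullet y)=-D(x)\bullet y-(-1)^{|x|}x\bullet D(y)$, which holds because $D\in\mathfrak{AnDer}(\mathcal{J})_{\bar 1}$. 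For the $u^*$-component I would expand the two relevant values of $\varphi$, namely $\varphi(D(x),y)=B(y,D^2(x))=0$ and $\varphi(x,D(y))=B(D(y),D(x))$, which reduces the required matching to the scalar identity
\[
B(x_0,x\bullet y)=(-1)^{|x|}B\big(D(y),D(x)\big).
\]
I would then close the argument by showing both sides vanish independently: the left-hand side by invariance of $B$ and $x_0\in Ann(\mathcal{J})$, via $B(x_0,x\bullet y)=B(x_0\bullet x,y)=0$; the right-hand side by supersymmetry of $D$ and $D^2=0$, via $B(D(y),D(x))=(-1)^{|y|}B(y,D^2(x))=0$. This matching of the $u^*$-coefficients is the delicate point, since it is precisely here that all four hypotheses — $D\in\mathfrak{AnDer}(\mathcal{J})$, $D^2=0$, supersymmetry of $D$, and $x_0\in Ann(\mathcal{J})$ — are used at once, while the scalar $\lambda$ and the parities $\alpha,\beta$ disappear through centrality.

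Finally I would record the parity bookkeeping. The map $\hat D$ is odd: the coefficient $-B(x_0,x)u^*$ is nonzero only when $|x|=\bar 0$, in which case it carries the odd degree of $u^*$, matching $|D(x)|$; when $|x|=\bar 1$ that coefficient vanishes and $\hat D(x)=D(x)$ is even, again consistent with an odd operator. Likewise $\hat x_0=x_0+\lambda u^*$ decomposes into the even annihilator element $x_0$ and the odd central element $\lambda u^*$, both of which annihilate $\hat{\mathcal{J}}$, so $\hat x_0\in Ann(\hat{\mathcal{J}})$ as required. Collecting the four verified conditions shows that $(\hat D,\hat x_0)$ is an admissible pair of $\hat{\mathcal{J}}$.
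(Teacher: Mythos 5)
Your proposal is correct and follows essentially the same route as the paper: a direct verification of the four conditions (anti-superderivation, $\hat D^2=0$, $\hat x_0\in Ann(\hat{\mathcal{J}})$, $\hat D(\hat x_0)=0$) using centrality of $u^*$, the admissibility of $(D,x_0)$, and the supersymmetry of $D$. If anything you are slightly more explicit than the paper at the one point it glosses over, namely that the $u^*$-coefficient $B(x_0,x\bullet y)$ vanishes by invariance of $B$ together with $x_0\in Ann(\mathcal{J})$.
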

\begin{proof}
    Let $x+\alpha u^*,\; y+\beta u^*\in\hat{\mathcal{J}}$, then we have
    \begin{align*}
        &\hat{D}\big((x+\alpha u^*)\hat{\bullet}(y+\beta u^*)\big)=\hat{D}\big(x\bullet y+ B(y,D(x))u^*\big)=D(x,y)-B(x_0,x\bullet y)u^*=D(x,y).
    \end{align*}
    On the other hand,
    \begin{align*}
        &\hat{D}(x+\alpha u^*)\hat{\bullet}(y+\beta u^*)=D(x)\bullet y+B(y,D^2(x))u^*
    ,\\
        &(x+\alpha u^*)\hat{\bullet}D(y+\beta u^*)=x\bullet D(y)+B(D(y),D(x))u^*.
    \end{align*}
    Then $\hat{D}$ is an anti superderivation if and only if
    $$B\big(y,D^2(x)\big)-B\big(D(y),D(x)\big)=0.$$
    Since $D$ of degree $1$ , supersymmetric with respect to $B$, and $D^2(x)=0.$, we have 
    \begin{align*}
        &B\big(y,D^2(x)\big)-B\big(D(y),D(x)\big)\\&=B\big(y,D^2(x)\big)+B\big(y,D^2(x)\big)=2B\big(y,D^2(x)\big)=0.
    \end{align*}
    Then $\hat{D}$ is an anti-superderivation. Now we have
    \begin{align*}
        &\hat{x}_0\hat{\bullet}(x+\alpha u^*)=(x_0+\lambda u^*)\hat{\bullet}(x+\alpha u^*)=x_0\bullet x+B(x,D(x_0))u^*=x_0\bullet x=0,\\
        &(x+\alpha u^*)\hat{\bullet}\hat{x}_0=(x+\alpha u^*)\hat{\bullet}(x_0+\lambda u^*)=x\bullet x_0+B(x_0,D(x))u^*=x\bullet x_0=0.
    \end{align*}
    
    Finaly $\hat{D}^2(x+\alpha u^*)=\hat{D}\big(D(x)-B(x_0,x)u^*\big)=D^2(x)-B\big(x_0,D(x)\big)=0.$ And $\hat{D}(x_0+\lambda u^*)=D(x_0)-B(x_0,x_0)u^*=0.$
    Then $(\hat{D},\hat{x_0})$ is an admissible pair.
\end{proof}
\begin{thm}
    Let $(\mathcal{J},\bullet,B)$ be a pseudo-euclidean mock-Lie superalgebra, $\mathcal{U}=\mathcal{U}_{\bar{1}}=\mathbb{K}u$ be the one-dimensional odd mock-Lie superalbegra, and $\mathcal{U}^*$ its dual space. Let $(D,x_0)$ be an admissible pair of $\mathcal{J}$ such that $D$ is super symmetric with respect to $B$, and $B(x_0,x_0)=0.$ Then $\tilde{\mathcal{J}}=\mathcal{U}\oplus\mathcal{J}\oplus\mathcal{U}^*$ with the product 
    \begin{align*}
        &u\tilde{\bullet}u=x_0+\lambda u^*,\quad \lambda\in\mathbb{K}\\
        &u\tilde{\bullet}x=D(x)-B(x_0,x)u^*,\quad\forall x\in\mathcal{J}\\
        &x\tilde{\bullet}y=x\bullet y+B(y,D(x))u^*,\quad\forall x,y\in\mathcal{J}\\
        &u^*\tilde{\bullet}X=X\tilde{\bullet}u^*=0, \quad\forall X\in\tilde{\mathcal{J}},
    \end{align*}
    is a mock-Lie superalgebra generalized double extension of $\mathcal{J}$ by the one-dimensional odd mock-Lie superalbegra $\mathcal{U}$ by means of $(D,x_0)$. Moreover the bilinear form $\tilde{B}$ defined by:\\
    $\tilde{B}(u,u^*)=1,\tilde{B}(u^*,u^*)=\tilde{B}(u,u)=0,\tilde{B}(u,x)=\tilde{B}(u^*,x)=0,\quad\forall x\in\mathcal{J},$ and $\tilde{B}_{|\mathcal{J}\times\mathcal{J}}=B,$ is an invariant scalar product on $\tilde{\mathcal{J}}.$
\end{thm}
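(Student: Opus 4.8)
The plan is to obtain $\tilde{\mathcal{J}}$ as an iterate of the two one-dimensional constructions already in place, so that no fresh super-Jacobi computation is required, and then to verify the four defining properties of $\tilde{B}$ separately, keeping invariance for last. First I would form the central extension $\hat{\mathcal{J}}=\mathcal{J}\oplus\mathcal{U}^*$ of Lemma~\ref{central-by-one}, whose product is $x\hat{\bullet}y=x\bullet y+B(y,D(x))u^*$ with $u^*$ central. By Lemma~\ref{ext-pair}, the pair $(\hat{D},\hat{x}_0)$ with $\hat{x}_0=x_0+\lambda u^*$ and $\hat{D}(x+\alpha u^*)=D(x)-B(x_0,x)u^*$ is an admissible pair of $\hat{\mathcal{J}}$; this is exactly the step that consumes $D^2=0$, the supersymmetry of $D$, and $B(x_0,x_0)=0$. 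Feeding this admissible pair into the generalized semi-direct product of Lemma~\ref{gen-semi-prod-by-one-dim} applied to $\hat{\mathcal{J}}$ produces a mock-Lie superalgebra structure on $\mathcal{U}\oplus\hat{\mathcal{J}}=\mathcal{U}\oplus\mathcal{J}\oplus\mathcal{U}^*$. It then remains only to expand $u\,\tilde{\bullet}\,u=\hat{x}_0$, $u\,\tilde{\bullet}\,x=\hat{D}(x)$, and the restriction of $\hat{\bullet}$ to $\mathcal{J}$, and to read off $\hat{D}(u^*)=0$ together with the centrality of $u^*$, so as to match these termwise with the displayed formulas for $\tilde{\bullet}$. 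This settles the algebra structure with essentially no new work.

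For the metric, supersymmetry, evenness, and non-degeneracy are quick. Because $u,u^*$ are odd and $x_0$ is even, supersymmetry of $\tilde{B}$ reduces to that of $B$ on $\mathcal{J}\times\mathcal{J}$ together with the single odd relation $\tilde{B}(u^*,u)=-\tilde{B}(u,u^*)$, while the mixed blocks vanish by the prescribed values $\tilde{B}(u,x)=\tilde{B}(u^*,x)=0$; evenness follows identically from evenness of $B$. For non-degeneracy I would take $X=\alpha u+x+\beta u^*$ with $\tilde{B}(X,\tilde{\mathcal{J}})=0$, pair against $u^*$ and against $u$ to force $\alpha=\beta=0$, and pair against $\mathcal{J}$ to force $x=0$ by non-degeneracy of $B$.

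The substance lies in invariance, $\tilde{B}(A\,\tilde{\bullet}\,B,C)=\tilde{B}(A,B\,\tilde{\bullet}\,C)$, which I would verify by letting $A,B,C$ run over the homogeneous generators $u$, $\mathcal{J}$, and $u^*$. When all three lie in $\mathcal{J}$ the identity collapses to invariance of $B$, the $u^*$-tails produced by $\tilde{\bullet}$ being annihilated on pairing with $\mathcal{J}$; the cases containing $u^*$ are immediate, since $u^*$ is central and pairs nontrivially only with $u$. The informative cases carry a single $u$ together with two entries from $\mathcal{J}$: through the normalization $\tilde{B}(u,u^*)=1$ these unwind precisely into the relation $B(D(x),y)=(-1)^{|x|}B(x,D(y))$, so the hypothesis that $D$ is supersymmetric with respect to $B$ is exactly what closes them. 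The cases carrying two $u$'s route $u\,\tilde{\bullet}\,u=x_0+\lambda u^*$ and $u\,\tilde{\bullet}\,x=D(x)-B(x_0,x)u^*$ into $\tilde{B}$, and are settled using $x_0\in Ann(\mathcal{J})$, $D(x_0)=0$, and $B(x_0,x_0)=0$. I expect the main obstacle to be the sign bookkeeping in these last two families: one must reconcile the Koszul signs carried by the odd generator $u$ with the signs built into $\hat{D}$ and into $\varphi$, and check that the odd pairing $\tilde{B}(u^*,u)$ forced by supersymmetry remains compatible with the $x_0$- and $\lambda$-contributions. Getting these conventions mutually consistent is the delicate heart of the argument, everything else being bilinear and formal.
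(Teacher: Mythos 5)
Your proposal follows exactly the paper's own route: it assembles $\tilde{\mathcal{J}}$ by composing Lemma~\ref{central-by-one} (central extension by $\mathcal{U}^*$), Lemma~\ref{ext-pair} (the admissible pair $(\hat{D},\hat{x}_0)$), and Lemma~\ref{gen-semi-prod-by-one-dim} (generalized semi-direct product by $\mathcal{U}$), and then checks the properties of $\tilde{B}$ directly. The only difference is that you spell out the evenness, supersymmetry, non-degeneracy, and case-by-case invariance of $\tilde{B}$, which the paper simply declares to be clear, so your argument is correct and, if anything, more complete.
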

\begin{proof}
    By Lemma \ref{central-by-one} and Lemma \ref{ext-pair}, $\hat{\mathcal{J}}$ is a mock-Lie superalgebra central extension of $\mathcal{J}$ by $\mathcal{U}^*$ and $(\hat{D},\hat{x}_0)$ is an admissible pair, by Lemma \ref{gen-semi-prod-by-one-dim}, $\tilde{\mathcal{J}}$ with the product  difined above is the generalized double extension of $\hat{\mathcal{J}}$ by $\mathcal{U}$. Clearly $\tilde{B}$ is even, supersymmetric, invariant, and non-degenerate.  
\end{proof}
\begin{rmk}
    If $Ann(\mathcal{J})\cap{\mathcal{J}}_{\bar{0}}={0},$ (i.e. $x_0=0).$  Then $\tilde{\mathcal{J}}$ defined above is a double extension of $\mathcal{J}$ by $\mathcal{U}.$ With the product 
     \begin{align*}
        &u\tilde{\bullet}u=0,\\
        &u\tilde{\bullet}x=D(x),\quad\forall x\in\mathcal{J}\\
        &x\tilde{\bullet}y=x\bullet y+B(y,D(x)),\quad\forall x,y\in\mathcal{J}\\
        &u^*\tilde{\bullet}X=X\tilde{\bullet}u^*=0, \quad\forall X\in\tilde{\mathcal{J}}.
    \end{align*}
\end{rmk}

\begin{defi}
    Let $(\mathcal{J}_1,B_1)$ and $(\mathcal{J}_2,B_2)$ be tow pseudo-euclidean mock-Lie superalgebras. An isometry from $(\mathcal{J}_1,B_1)$ to $(\mathcal{J}_2,B_2)$ is an isomorphism $\Psi:(\mathcal{J}_1,B_1)\to (\mathcal{J}_2,B_2)$ of mock-Lie superalgebras such that $B_2(\Psi(x),\Psi(y))=B_1(x,y),\quad\forall x,y\in\mathcal{J}_1.$
\end{defi}
Now, let us investigate the notion of isometry betwean tow generalized double extensions of pseudo-euclidean mock-Lie superakgebra $(\mathcal{J},\bullet,B).$ 
Let $(\mathcal{J},\bullet,B)$ be a mock-lie superalgebra, $({\tilde{\mathcal{J}}}_1,{\tilde{\bullet}}_1,{\tilde{B}}_1)$ (resp.$({\tilde{\mathcal{J}}}_2,{\tilde{\bullet}}_2,{\tilde{B}}_2)$ the generalized double extension of $({{\mathcal{J}}},{{\bullet}},{{B}})$ by $\mathcal{U}_1$ (resp.$\mathcal{U}_2$) by means of $(D_1,x_1)$ (resp.$(D_2,x_2)).$ Let $\Psi$ be an isometry from $({\tilde{\mathcal{J}}}_1,{\tilde{B}}_1)$ to $({\tilde{\mathcal{J}}}_2,{\tilde{B}}_2).$ Assume that $\Psi(\mathcal{J}\oplus{\mathcal{U}_1}^*)=\mathcal{J}\oplus{\mathcal{U}_2}^*.$ Then 
\begin{align*}
    &\Psi(u_1)=\gamma u_2+z_0+\mu {u_2}^*,\quad \gamma ,\mu\in \mathbb{K}, z_0\in \mathcal{J}\\
    &\Psi({u_1}^*)=\alpha{u_2}^*+u_0,\quad u_0\in\mathcal{J},\alpha\in\mathbb{K}\\
    &\Psi(x)=s(x)+t(x){u_2}^*.\quad\forall x\in\mathcal{J},
\end{align*}
where $s:\mathcal{J}\to\mathcal{J}$ is an even linear map   and $t$ is a linear form of $\mathcal{J}.$ 
Since ${\tilde{B}}_1$ is non degenerate then there exists $c\in\mathcal{J}$ such that $t(x)={\tilde{B}}_1(c,x).$ Let $x\in\mathcal{J},$ we have $0={\tilde{B}}_1({u_1}^*,x)={\tilde{B}}_2\big(\Psi({u_1}^*),\Psi(x)\big),$ then ${\tilde{B}}_2\big(u_0,s(x)\big)=0.$ And $0={\tilde{B}}_1(u_1,x)={\tilde{B}}_2\big(\Psi(u_1),\Psi(x)\big),$ then ${\tilde{B}}_2\big(z_0,s(x)\big)=-\gamma {\tilde{B}}_1(c,x).$ Moreover $0={\tilde{B}}_1({u_1}^*,{u_1}^*)={\tilde{B}}_2\big(\Psi({u_1}^*),\Psi({u_1}^*)\big)={\tilde{B}}_2(u_0,u_0).$

Now, let $x,y\in\mathcal{J}.$ ${\tilde{B}}_2\big(\Psi(x),\Psi(y)\big)={\tilde{B}}_2\big(s(x),s(y)\big).$ Then $s$ is bijective. Since ${\tilde{B}}_2$ is non degenerate then ${\tilde{B}}_2\big(u_0,s(x)\big)=0$ implies that $u_0=0,$ and then $\Psi({u_1}^*)=\alpha {u_2}^*.$ Further we have $1={\tilde{B}}_1(u_1,{u_1}^*)={\tilde{B}}_2\big(\Psi(u_1),\Psi({u_1}^*)\big)={\tilde{B}}_2(\gamma u_2+z_0+\mu {u_2}^*,\alpha{u_2}^*)=\alpha\gamma.$ Thus ${\tilde{B}}_2\big(z_0,s(x)\big)=-\gamma {\tilde{B}}_1(c,x).$ So ${\tilde{B}}_2\big(s^{-1}(z_0),x\big)={\tilde{B}}_1(-\gamma c,x).$ Then $c=-\alpha s^{-1}(z_0).$ Furthermore ${\tilde{B}}_1(u_1,u_1)={\tilde{B}}_2\big(\Psi(u_1),\Psi(u_1)\big)=2\gamma \mu+{\tilde{B}}_2(z_0,z_0)=0.$ Therfore ${\tilde{B}}_2(z_0,z_0)=2\gamma\mu.$ Then $\mu={\tilde{B}}_2(-\frac{\alpha}{2}z_0,z_0).$ Then $\Psi$ defined by 
\begin{align*}
    &\Psi(u_1)=\frac{1}{\alpha}u_2+z_0+\frac{-\alpha}{2}{B}(z_0,z_0){u_2}^*,\\
    &\Psi({u_1}^*)=\alpha{u_2}^*,\\
    &\Psi(x)=s(x)-\alpha B\big(z_0,s(x)\big){u_2}^*,\quad\forall x\in\mathcal{J}.
\end{align*}
Let $({\tilde{\mathcal{J}}}_i,{\tilde{\bullet}}_i,{\tilde{B}}_i)$, for $i=1,2,$ be a generalized double extension of $\mathcal{J}$ by $\mathcal{U}_i=\mathbb{K}u_i$. Then 
\begin{align*}
        &u_i\tilde{\bullet}_iu_i=x_i+\lambda_i u_i^*,\quad \lambda_i\in\mathbb{K}\\
        &u_i\tilde{\bullet}_ix=-x\tilde{\bullet}_iu_i=D_i(x)-B(x_i,x)u_i^*,\quad\forall x\in\mathcal{J}\\
        &x\tilde{\bullet}_iy=x\bullet y+B\big(y,D_i(x)\big)u_i^*,\quad\forall x,y\in\mathcal{J}\\
        &u_i^*\tilde{\bullet}_iX=X\tilde{\bullet}_iu_i^*=0, \quad\forall X\in\tilde{\mathcal{J}}.
    \end{align*}
With the above notation we have the following 
\begin{thm}
Let $({\tilde{\mathcal{J}}}_1,{\tilde{\bullet}}_1,{\tilde{B}}_1),({\tilde{\mathcal{J}}}_2,{\tilde{\bullet}}_2,{\tilde{B}}_2)$ be two generalized double extensions of $\mathcal{J}$. We say that $({\tilde{\mathcal{J}}}_1,{\tilde{\bullet}}_1,{\tilde{B}}_1)$ and $({\tilde{\mathcal{J}}}_2,{\tilde{\bullet}}_2{\tilde{B}}_2)$ is isometric if and only if there exist an isometry $s$ in $(\mathcal{J},{\bullet},{B}),$ $z_0\in Ann(\mathcal{J})\cap\mathcal{J}_{\bar{0}},$ and $\alpha\in\mathbb{K}\setminus\{0\}$ such that:
\begin{align*}
&\alpha^3\lambda_1-\lambda_2=B\big(z_0,\alpha^3s(x_1)\big),\\
    &s(x_1)=\frac{1}{\alpha^2}x_2,\\
    &s\circ D_1\circ s^{-1}-\frac{1}{\alpha}D_2=0.
\end{align*}
\end{thm}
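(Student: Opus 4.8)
The plan is to prove both implications by directly comparing the two generalized double extensions using the explicit form of the isometry $\Psi$ already derived in the discussion preceding the theorem. Recall that any isometry $\Psi:\tilde{\mathcal{J}}_1\to\tilde{\mathcal{J}}_2$ sending $\mathcal{J}\oplus\mathcal{U}_1^*$ to $\mathcal{J}\oplus\mathcal{U}_2^*$ must take the form
\begin{align*}
&\Psi(u_1)=\tfrac{1}{\alpha}u_2+z_0-\tfrac{\alpha}{2}B(z_0,z_0)u_2^*,\\
&\Psi(u_1^*)=\alpha u_2^*,\\
&\Psi(x)=s(x)-\alpha B(z_0,s(x))u_2^*,
\end{align*}
for some $\alpha\in\mathbb{K}\setminus\{0\}$, some $z_0\in\mathcal{J}$, and some even bijective linear map $s:\mathcal{J}\to\mathcal{J}$. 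First I would verify the forward direction: assuming such an isometry exists, I would impose that $\Psi$ is an algebra homomorphism, i.e. $\Psi(X\tilde{\bullet}_1Y)=\Psi(X)\tilde{\bullet}_2\Psi(Y)$, on the three generating types of products $u_1\tilde{\bullet}_1u_1$, $u_1\tilde{\bullet}_1x$, and $x\tilde{\bullet}_1y$, and read off the resulting constraints.

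The key computational steps proceed product by product. Evaluating $\Psi(x\tilde{\bullet}_1y)=\Psi(x)\tilde{\bullet}_2\Psi(y)$ and using that $u_2^*$ is annihilating forces the $\mathcal{J}$-component to give $s(x\bullet y)=s(x)\bullet s(y)$ (so $s$ is an algebra isomorphism, hence an isometry together with the metric-preservation already established), while the $u_2^*$-component yields a relation between $B(y,D_1(x))$ and $B(s(y),D_2(s(x)))$; after using supersymmetry of $D_i$ and invariance of $B$ this is equivalent to $s\circ D_1\circ s^{-1}=\tfrac{1}{\alpha}D_2$. Next, evaluating $\Psi(u_1\tilde{\bullet}_1x)=\Psi(u_1)\tilde{\bullet}_2\Psi(x)$ and matching $\mathcal{J}$-components gives $s(D_1(x))=\tfrac{1}{\alpha}D_2(s(x))$ (consistent with the previous relation, using $z_0\in Ann(\mathcal{J})$ so that $z_0\bullet s(x)=0$), while the $u_2^*$-components produce the compatibility $B(x_1,x)$ versus $B(x_2,s(x))$ that, after inserting $s$ and $\alpha$, delivers $s(x_1)=\tfrac{1}{\alpha^2}x_2$. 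Finally, evaluating $\Psi(u_1\tilde{\bullet}_1u_1)=\Psi(u_1)\tilde{\bullet}_2\Psi(u_1)$ gives on the $\mathcal{J}$-side $s(x_1)$ against $x_2/\alpha^2$ (reconfirming the middle relation, again requiring $z_0\in Ann(\mathcal{J})$ and $z_0\tilde{\bullet}_2z_0$-type terms to vanish) and on the $u_2^*$-side the scalar relation that, after collecting the contributions $\alpha^3\lambda_1$, $\lambda_2$, and the $D_2(z_0)$- and $B(z_0,\cdot)$-terms, reduces to $\alpha^3\lambda_1-\lambda_2=B(z_0,\alpha^3s(x_1))$.

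For the converse, I would start from data $(s,z_0,\alpha)$ satisfying the three displayed relations and simply \emph{define} $\Psi$ by the boxed formulas above, then check that it is an isometry: metric-preservation is a short computation using $\tilde{B}_1(u_1,u_1^*)=1$, $\tilde{B}_2(u_2,u_2^*)=1$, the definition of $\mu=-\tfrac{\alpha}{2}B(z_0,z_0)$, and the fact that $s$ preserves $B$; and the homomorphism property follows by reversing the three computations above, where each of the three hypotheses is exactly what is needed to make the corresponding product match.

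The main obstacle is bookkeeping of signs and of the $u_2^*$-components: because $u_1$ and $u_1^*$ are odd while $z_0$ and $x_i$ live in $\mathcal{J}_{\bar{0}}$, one must track the Koszul signs carefully, and the delicate point is ensuring that every term involving $D_2(z_0)$, $B(x_i,z_0)$, or products $z_0\bullet s(x)$ either vanishes (via $z_0\in Ann(\mathcal{J})\cap\mathcal{J}_{\bar{0}}$ and the admissibility condition $D_2(x_2)=0$) or is absorbed into the stated scalar relation. In particular the appearance of the power $\alpha^3$ rather than a lower power in the first relation comes from combining the $\tfrac{1}{\alpha}$ scaling of $D$ with the $\tfrac{1}{\alpha^2}$ scaling of $x_i$ and the metric normalization, and getting this exponent right is where the computation must be done with the most care.
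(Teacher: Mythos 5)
Your overall strategy coincides with the paper's: start from the normal form of the isometry $\Psi$ obtained in the discussion preceding the theorem, impose $\Psi(X\tilde{\bullet}_1Y)=\Psi(X)\tilde{\bullet}_2\Psi(Y)$ on the generating products, and read off the three relations; the converse is the same computation run backwards. However, there is one genuine gap in your forward direction: the membership $z_0\in Ann(\mathcal{J})$ is part of the \emph{conclusion} of the theorem, yet you invoke it as a hypothesis at every point where it is needed (to kill $z_0\bullet s(x)$ in the mixed product, to kill $z_0\tilde{\bullet}_2 z_0$ in $\Psi(u_1)\tilde{\bullet}_2\Psi(u_1)$, and to kill $B(z_0,s(x\bullet y))$ in the $u_2^*$-component of $\Psi(x)\tilde{\bullet}_2\Psi(y)$). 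Without it, the $\mathcal{J}$-components of your three checks only give
\begin{align*}
s\circ D_1\circ s^{-1}(x)-\tfrac{1}{\alpha}D_2(x)=z_0\bullet x,\qquad
s(x_1)=\tfrac{1}{\alpha^2}x_2+z_0\bullet z_0,
\end{align*}
which are weaker than the stated relations, and your plan as written (checking only the three products $u_1\tilde{\bullet}_1u_1$, $u_1\tilde{\bullet}_1x$, $x\tilde{\bullet}_1y$) contains no step that forces $z_0\bullet x=0$.

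The paper closes exactly this gap by imposing the homomorphism condition on the mixed product in \emph{both} orders: comparing $\Psi(u_1\tilde{\bullet}_1x)=\Psi(u_1)\tilde{\bullet}_2\Psi(x)$ with $\Psi(x\tilde{\bullet}_1u_1)=\Psi(x)\tilde{\bullet}_2\Psi(u_1)$ yields $s\circ D_1\circ s^{-1}(x)-\tfrac{1}{\alpha}D_2(x)=z_0\bullet x$ and $=-z_0\bullet x$ simultaneously, whence $z_0\bullet x=0$ for all $x$, i.e.\ $z_0\in Ann(\mathcal{J})$; only then do the relations $s\circ D_1\circ s^{-1}=\tfrac{1}{\alpha}D_2$ and $s(x_1)=\tfrac{1}{\alpha^2}x_2$ follow. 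You should add this comparison (or some equivalent argument producing $z_0\bullet\mathcal{J}=0$) to the forward direction; the rest of your outline, including the derivation of $s(x\bullet y)=s(x)\bullet s(y)$, the scalar relation for $\alpha^3\lambda_1-\lambda_2$, and the converse by direct verification, matches the paper's computation.
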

\begin{proof}
Using the fact that $\Psi$ is a homomorphism. We have $$\Psi(u_1\tilde{\bullet}_1u_1)=\Psi(x_1+\lambda_1{u_1}^*)=\Psi(x_1)+\lambda_1\Psi({u_1}^*)=s(x_1)-\alpha B(z_0,s(x_1)){u_2}^*+\alpha\lambda_1{u_2}^*.$$ On the other hand, $$\Psi(u_1)\tilde{\bullet}_2\Psi(u_1)=\frac{1}{\alpha^2}u_2\tilde{\bullet}_2u_2+z_0\tilde{\bullet}_2z_0=\frac{1}{\alpha^2}x_2+z_0\bullet z_0+\frac{1}{\alpha^2}\lambda_2{u_2}^*+B(z_0,D_2(z_0)){u_2}^*.$$  Then we have 
\begin{equation}\label{iso5.1}
    s(x_1)=\frac{1}{\alpha^2}x_2+z_0\bullet z_0.
\end{equation}
And $\alpha^3\lambda_1-\lambda_2=\alpha^3B(z_0,s(x_1))+\alpha^2B(z_0,D_2(z_0)).$ Since $B$ is non degenerate, then
\begin{equation}
    \alpha^3\lambda_1-\lambda_2=\alpha^3B(z_0,s(x_1)).
\end{equation}
Now, let $x\in\mathcal{J}.$ Then $$\Psi(u_1\tilde{\bullet}x)=\Psi(D_1(x)-B(x_1,x){u_1}^*)=s(D_1(x))-\alpha B(z_0,s(D_1(x))){u_2}^*-\alpha B(x_1,x){u_2}^*.$$ On the other hand, $$\Psi(u_1)\tilde{\bullet}_2\Psi(x)=\frac{1}{\alpha}D_2(s(x))+z_0\bullet s(x)-\frac{1}{\alpha}B(x_2,s(x)){u_2}^*+B(s(x),D_2(z_0)){u_2}^*.$$ Then 
\begin{equation*}
    s(D_1(x))-\frac{1}{\alpha}D_2(s(x))=z_0\bullet s(x).
\end{equation*}
Replacing $x$ by $s^{-1}(x),$ yields  
\begin{equation}\label{iso5.3}
    s\circ D_1\circ s^{-1}(x)-\frac{1}{\alpha}D_2(x)=z_0\bullet x.
\end{equation}
Moreover, $-\alpha B(z_0,s(D_1(x))-\alpha B(x_1,x)+\frac{1}{\alpha}B(x_2,s(x))-B(s(x),D_2(z_0))=0.$ Using the properties of $D_1$ and $s.$ We have $B(D_1(s^{-1}(z_0))+x_1-\frac{1}{\alpha^2}s^{-1}(x_2)-\frac{1}{\alpha}s^{-1}(D_2(z_0)),x)=0.$ Since $B$ non degenerate, then $D_1(s^{-1}(z_0)+x_1-\frac{1}{\alpha^2}s^{-1}(x_2)-\frac{1}{\alpha}s^{-1}(D_2(z_0))=0.$ Applying $s$ we have $s(D_1(s^{-1}(z_0))+s(x_1)-\frac{1}{\alpha^2}x_2-\frac{1}{\alpha}D_2(z_0)=0.$ Finaly using \eqref{iso5.1}, we have
\begin{equation}\label{iso5.4}
    s\circ D_1\circ s^{-1}(z_0)-\frac{1}{\alpha}D_2(z_0)=-z_0\bullet z_0.
\end{equation}
Simulary $\Psi(x\tilde{\bullet}_1u_1)=\Psi(x)\tilde{\bullet}_2\Psi(u_1)$ equivalent to  
\begin{equation}\label{iso5.5}
     s\circ D_1\circ s^{-1}(x)-\frac{1}{\alpha}D_2(x)=-z_0\bullet x.
\end{equation}
And \eqref{iso5.4} is satisfied. By \eqref{iso5.3} and \eqref{iso5.5}, clearly $z_0\bullet x=x\bullet z_0=0.$ And $s\circ D_1\circ s^{-1}(x)-\frac{1}{\alpha}D_2(x)=0.$ Then, according to  \eqref{iso5.1}, we have $s(x_1)=\frac{1}{\alpha^2}x_2.$ Now let $x,y\in\mathcal{J}.$ Thus we have, $$\Psi(x\tilde{\bullet}_1y)=s(x\bullet y)-\alpha B(z_0,s(x\bullet y)){u_2}^*+\alpha B(y,D_1(x)) {u_2}^*.$$ On the other hand $\Psi(x)\tilde{\bullet}_2\Psi(y)=s(x)\bullet s(y)+B(s(y),D_2(s(x)){u_2}^*.$ Then $s(x\bullet y)=s(x)\bullet s(y),$ and \eqref{iso5.3} are satisfied. 
\end{proof}

\begin{thm}\label{doub-ex}
Let $(\tilde{\mathcal{J}},\tilde{\bullet},\tilde{B)}$ be a pseudo-euclidean mock-Lie superalgebra such that $dim(\tilde{\mathcal{J}}_{\bar{1}})>1.$ Then $\tilde{\mathcal{J}}$ is a generalized double extension of a pseudo-euclidean mock-Lie superalgebra $(\mathcal{J},\bullet,B)$ by the one-dimensional odd mock-Lie superalgebra such that $dim(\mathcal{J}_{\bar{1}})=n-2$.
\end{thm}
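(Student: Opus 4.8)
The plan is to run the Medina--Revoy inductive reduction, adapted to the super-commutative pseudo-euclidean setting, so that $(\mathcal{J},\bullet,B)$ appears as the quotient of a codimension-one ideal by a one-dimensional isotropic central ideal. Since $\dim(\tilde{\mathcal{J}}_{\bar 1})>1$ we have in particular $\tilde{\mathcal{J}}_{\bar 1}\neq\{0\}$, so Corollary \ref{central-element} furnishes a nonzero $u^*\in Ann(\tilde{\mathcal{J}})\cap\tilde{\mathcal{J}}_{\bar 1}$. As $\tilde B$ is supersymmetric and $u^*$ is odd, $\tilde B(u^*,u^*)=-\tilde B(u^*,u^*)$ gives $\tilde B(u^*,u^*)=0$, so $\mathcal{U}^*:=\mathbb{K}u^*$ is an isotropic graded ideal lying in the annihilator. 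I would first check that $(\mathcal{U}^*)^{\perp}$ is a graded ideal of codimension one containing $\mathcal{U}^*$, and that the radical of $\tilde B|_{(\mathcal{U}^*)^{\perp}}$ is exactly $\mathcal{U}^*$; this uses invariance and non-degeneracy of $\tilde B$ together with $u^*\in Ann(\tilde{\mathcal{J}})$.

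Next I would fix a hyperbolic frame. Choose $u\in\tilde{\mathcal{J}}_{\bar 1}$ with $\tilde B(u,u^*)=1$; after replacing $u$ by $u-\tfrac12\tilde B(u,u)\,u^*$ one may assume $\tilde B(u,u)=0$. Put $\mathcal{J}:=\{u,u^*\}^{\perp}$, a graded non-degenerate subspace, so that $\tilde{\mathcal{J}}=\mathbb{K}u\oplus\mathcal{J}\oplus\mathbb{K}u^*$ and $(\mathcal{U}^*)^{\perp}=\mathcal{J}\oplus\mathcal{U}^*$. Equipping $\mathcal{J}$ with $B:=\tilde B|_{\mathcal{J}}$ and with the product $x\bullet y$ given by the $\mathcal{J}$-component of $x\,\tilde\bullet\,y$, and identifying $\mathcal{J}\cong(\mathcal{U}^*)^{\perp}/\mathcal{U}^*$, one sees that $\bullet$ is mock-Lie and that $B$ is even, supersymmetric, invariant and non-degenerate; hence $(\mathcal{J},\bullet,B)$ is pseudo-euclidean. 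Because $u,u^*$ are odd and $\tilde B$ is even, the splitting respects the grading, giving $\dim(\mathcal{J}_{\bar 1})=\dim(\tilde{\mathcal{J}}_{\bar 1})-2=n-2$.

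The extension data is then read off from the action of $u$. For $x\in\mathcal{J}$ one has $u\,\tilde\bullet\,x\in(\mathcal{U}^*)^{\perp}$, and since $u\,\tilde\bullet\,u=0$ (forced by super-commutativity of an odd element) invariance gives $\tilde B(u\,\tilde\bullet\,x,u)=0$, so $u\,\tilde\bullet\,x$ has no $u^*$-component and lies in $\mathcal{J}$; I define $D(x):=u\,\tilde\bullet\,x$. Projecting the anti-superderivation form of the super-Jacobi identity for the triple $(u,x,y)$ onto $\mathcal{J}$ shows $D\in(\mathfrak{AnDer}(\mathcal{J}))_{\bar 1}$, while the identity $\tilde B(u\,\tilde\bullet\,x,y)=(-1)^{|x|}\tilde B(x,u\,\tilde\bullet\,y)$, obtained from invariance and supersymmetry of $\tilde B$, shows $D\in\mathfrak{AnDer}_s(\mathcal{J})$. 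Since $u\,\tilde\bullet\,u=0$ we are in the case $x_0=0,\ \lambda=0$. It then remains to expand the products in the frame and recognise them as the prescribed ones: $u\,\tilde\bullet\,x=D(x)$, and, writing $x\,\tilde\bullet\,y=x\bullet y+c(x,y)u^*$, to identify the coefficient $c(x,y)=\tilde B(x\,\tilde\bullet\,y,u)$ with $B(y,D(x))$ through invariance. With the admissible pair $(D,0)$ in hand, the generalized double extension machinery of Lemmas \ref{gen-semi-prod-by-one-dim}, \ref{central-by-one} and \ref{ext-pair} reconstructs $(\tilde{\mathcal{J}},\tilde\bullet,\tilde B)$ from $(\mathcal{J},\bullet,B)$ and $\mathcal{U}=\mathbb{K}u$, which is the assertion.

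The step I expect to be the main obstacle is verifying that $(D,0)$ is genuinely an admissible pair, above all that $D^2=0$. Unlike the Lie-superalgebra situation, the super-Jacobi identity applied to $(u,u,x)$ here collapses to a tautology: the two copies of $u\,\tilde\bullet\,(u\,\tilde\bullet\,x)$ produced by the anti-superderivation rule carry opposite signs (because $u$ is odd and the product is super-commutative, whence $u\,\tilde\bullet\,u=0$) and cancel. Consequently $D^2=0$ cannot be extracted from the Jacobi identity alone and must be forced from the pseudo-euclidean data: supersymmetry of $D$ only yields that $D^2$ is a $B$-skew even superderivation, so one must exploit in addition the isotropy $\tilde B(u,u)=0$, the nilpotency of mock-Lie superalgebras, and non-degeneracy of $B$. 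Pinning down $D^2=0$ cleanly, together with the Koszul-sign bookkeeping that matches each $u^*$-coefficient $c(x,y)$ to the prescribed $B(y,D(x))$, is the delicate heart of the argument; the remaining hyperbolic-reduction steps are routine.
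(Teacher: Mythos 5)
Your reduction follows the paper's proof essentially step for step: Corollary \ref{central-element} supplies the odd isotropic annihilator element $u^*$, you complete it to a hyperbolic pair $(u,u^*)$, set $\mathcal{J}=(\mathbb{K}u\oplus\mathbb{K}u^*)^{\perp}$, and recover $D$ and the $u^*$-coefficients of the products from invariance of $\tilde{B}$. The one place you diverge is in declaring $u\,\tilde{\bullet}\,u=0$ at the outset; this is indeed forced by \eqref{S.Commt.Ident} in characteristic $0$ (an odd element of a super-commutative algebra squares to zero), so your conclusion $x_0=0$, $\lambda=0$ is correct, whereas the paper keeps $u\,\tilde{\bullet}\,u=\alpha u+x_0+\beta u^*$ and derives $\alpha=0$, $x_0\bullet x=0$, $D(x_0)=0$, $B(x_0,x_0)=0$ from invariance and the Jacobi identity. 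Your shortcut is legitimate and in fact shows that in this setting the ``generalized'' double extension by an odd line always collapses to an ordinary double extension.

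The genuine gap is the one you flag yourself and do not close: $D^2=0$. Your diagnosis of the difficulty is accurate — in the super-Jacobi identity for the triple $(u,u,x)$ the two occurrences of $u\,\tilde{\bullet}\,(u\,\tilde{\bullet}\,x)$ enter with opposite signs and cancel, so no instance of the Jacobi identity constrains $D^2$ (the paper's own proof asserts that the triple $(x,y,u)$ yields $D^2=0$, which is not the case; that triple only gives the anti-superderivation property). However, the repair you sketch — forcing $D^2=0$ from isotropy, nilpotency and non-degeneracy — cannot succeed. All the pseudo-euclidean data gives is that $D^2$ is an even, $B$-skew superderivation, and that is not enough: take $\mathcal{J}$ with zero product, $B$ any even non-degenerate supersymmetric form, and $D$ any odd $B$-supersymmetric endomorphism with $D^2\neq0$ (these exist as soon as $\dim\mathcal{J}_{\bar{1}}\geq 2$); the resulting $\tilde{\mathcal{J}}=\mathbb{K}u\oplus\mathcal{J}\oplus\mathbb{K}u^*$ satisfies super-commutativity, the super-Jacobi identity and invariance of $\tilde{B}$ — every $D^2$-contribution cancels, exactly as it does in the proof of Lemma \ref{gen-semi-prod-by-one-dim} — yet $(D,0)$ is not an admissible pair in the paper's sense. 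So the step you isolate as ``the delicate heart of the argument'' is not merely delicate, it is unprovable from the hypotheses; the theorem is only recoverable if the condition $D^2=0$ is dropped from the definition of an admissible pair (it is never actually used in the direct constructions). As written, your proof, like the paper's, does not establish the statement in the form given.
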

\begin{proof}
    By Corollary \ref{central-element}, there exists $u^*\in Ann(\tilde{\mathcal{J}})\cap \tilde{\mathcal{J}}_{\bar{1}}\neq \{0\}.$ Let $\mathcal{U^*}=\mathbb{K}u^*$ and $({\mathcal{U}^*})^{\perp}$ its orthogonal space with respect to $\tilde{B}$. Then $\mathcal{U}^*\subseteq(\mathcal{U}^*)^\perp$. Since $\tilde{B}$ is even and non degenerate there exists $u\in\tilde{\mathcal{J}}_{\bar{1}}$ such that $\tilde{B}(u^*,u^*)=0$ and $\tilde{B}(u^*,u)=1.$ Let $\mathcal{U}=\mathbb{K}u$ and $\mathcal{J}=(\mathcal{U}\oplus\mathcal{U}^*)^{\perp}$, then $\tilde{\mathcal{J}}=\mathcal{U}\oplus\mathcal{J}\oplus\mathcal{U}^*.$ Since $\tilde{B}$ is invariant then $\tilde{\mathcal{J}}^2\subseteq(\mathcal{U}^*)^\perp.$ It follows that $(\mathcal{U^*})^{\perp}=\mathcal{U^*}\oplus\mathcal{J}$ is an ideal of $\tilde{\mathcal{J}}.$Therefore 
    \begin{align*}
        &u\tilde{\bullet}u=\alpha u+x_0+\beta u^*,\quad\forall\alpha,\beta\in\mathbb{K}\\
        &u\tilde{\bullet}x=D(x)+\eta(x)u^*,\quad\forall x\in\mathcal{J}\\
        &x\tilde{\bullet}y=x\bullet y+\varphi(x,y)u^*,\quad \forall x,y \in \mathcal{J}\\
        &u^*\tilde{\bullet}X=X\bullet_du^*=0, \quad\forall X\in\tilde{\mathcal{J}}.
    \end{align*}
    Where $''\bullet''$ is the product  of $\mathcal{J}$, $D$ is an endomorphism of $\mathcal{J}$ of degree $1$, $\eta$ is a linear form, and $\varphi$ is a bilinear form of $\mathcal{J}$.\\
    Since $\tilde{B}$ is even and invariant then we have 
    \begin{align*}
        &\alpha=0,\\
        &\varphi(x,y)=\tilde{B}(x\tilde{\bullet}y,u)=\tilde{B}(x,y\tilde{\bullet}u)=-\tilde{B}(x,D(y))=\tilde{B}(y,D(x)),\\
        &\eta(x)=-\tilde{B}(x\tilde{\bullet}u,u)=\tilde{B}(x,u\tilde{\bullet}u)=-\tilde{B}(x_0,x),\\
        &B(x\bullet y,z)=B(x,y\bullet z), \forall\quad x,y,z\in\mathcal{J}.
    \end{align*}
    Further $\tilde{B}(u,x\tilde{\bullet}y)=\tilde{B}(u\tilde{\bullet}x,y)=\tilde{B}(D(x),y),$ on the other hand $\tilde{B}(u,x\tilde{\bullet}y)=(-1)^{|x||y|}\tilde{B}(u,y\tilde{\bullet}x)=(-1)^{|x||y|}\tilde{B}(u\tilde{\bullet}y,x)=(-1)^{|x||y|}\tilde{B}(D(y),x)=(-1)^{|D||x|}\tilde{B}(x,D(y)).$ Then $D$ is super symmetric with respect to $B$, where $B=\tilde{B}_{|\mathcal{J}\times\mathcal{J}}$. Moreover $\tilde{B}(x_0\tilde{\bullet}u,u)=-\tilde{B}(x_0,x_0)=\tilde{B}(x_0,u\tilde{\bullet}u)=\tilde{B}(x_0,x_0),$ then $\tilde{B}(x_0,x_0)=0.$ Using the super-Jacobi identity of $\tilde{\mathcal{J}}$ with the three elements $x,u$ and $u$ we prove that $x_0\bullet x=0,\quad\forall x\in\mathcal{J}$ and then $D(x_0)=0.$ With the three elements $x,y,u$ we prove that $D$ is an anti superderivation and $D^2=0$. Finaly it is easy to prove the supercommutativity and the super-Jacobi identity of ${\mathcal{J}}.$ Then $\tilde{\mathcal{J}}=\mathcal{U}\oplus\mathcal{J}\oplus\mathcal{U}^*$ is a generalized double extension of $\mathcal{J}$ by the one-dimensional odd mock-Lie superalgebra $\mathcal{U}.$
\end{proof}
Let $(\tilde{\mathcal{J}},\tilde{\bullet},\tilde{B)}$ be a pseudo-euclidean mock-Lie superalgebra such that $dim(\tilde{\mathcal{J}}_{\bar{1}})=n>1.$  By induction on $n$, if $n=2$ then $\tilde{\mathcal{J}}$ is a double extension of superalgebra $\{0\}$ by $\mathcal{U}$. Suppose that the result is true for every $m<n$. Then, by Theorem \ref{doub-ex}, $\tilde{\mathcal{J}}$ is a generalized double extension or double extension of $\mathcal{J}$ by $\mathcal{U}$, and $dim(\mathcal{J})=n-2$. Then applying the induction hypothesis to $\mathcal{J}$.
\begin{cor}
    Let $(\tilde{\mathcal{J}},\tilde{B})$ be a pseudo-euclidean mock-Lie superalgebra, such that $ dim(\tilde{\mathcal{J}}_{\bar{1}})=n>1.$ Then $(\tilde{\mathcal{J}},\tilde{B})$ is obtained from a pseudo-euclidean mock-Lie superalgebra by a finite number of generalized double extensions and / or double extensions by the one-dimensional odd mock-Lie superalgebra.
    \end{cor}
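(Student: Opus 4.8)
The plan is to prove this by strong induction on the odd dimension $n=\dim(\tilde{\mathcal{J}}_{\bar{1}})$, using Theorem \ref{doub-ex} as the single engine of the descent. The point is that Theorem \ref{doub-ex} already does all the real work: it shows that whenever $n>1$, the superalgebra $\tilde{\mathcal{J}}$ is realized as a generalized double extension (or, in the special case $x_0=0$ recorded in the preceding remark, an ordinary double extension) of a pseudo-euclidean mock-Lie superalgebra $(\mathcal{J},\bullet,B)$ whose odd part has dimension $n-2$. Thus the corollary is essentially the formal packaging of this reduction into an iterated construction, and almost nothing beyond bookkeeping is needed.

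First I would pin down the parity and the base case. Because $\tilde{B}$ is even, supersymmetric and non-degenerate, its restriction to $\tilde{\mathcal{J}}_{\bar{1}}\times\tilde{\mathcal{J}}_{\bar{1}}$ is a non-degenerate alternating form, so $n$ is necessarily even; this is consistent with the earlier characterization forcing $\dim(\mathcal{J}_{\bar{1}})$ to be even for any pseudo-euclidean mock-Lie superalgebra. Consequently the descent $n\mapsto n-2$ stays within even values and terminates exactly at odd dimension $0$. The base object of the induction is therefore a pseudo-euclidean mock-Lie superalgebra with trivial odd part (equivalently, a pseudo-euclidean mock-Lie algebra viewed as a purely even superalgebra), which is the ``starting'' superalgebra referred to in the statement.

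For the inductive step, assume the result holds for all pseudo-euclidean mock-Lie superalgebras of odd dimension $m<n$. Applying Theorem \ref{doub-ex} to $\tilde{\mathcal{J}}$ produces $(\mathcal{J},\bullet,B)$ with $\dim(\mathcal{J}_{\bar{1}})=n-2$ together with an admissible pair $(D,x_0)$ exhibiting $\tilde{\mathcal{J}}$ as a (generalized) double extension of $\mathcal{J}$ by the one-dimensional odd mock-Lie superalgebra. If $n=2$ then $n-2=0$ and $\mathcal{J}$ is already a base object, so $\tilde{\mathcal{J}}$ is obtained from it by a single such extension. If $n>2$, the induction hypothesis expresses $(\mathcal{J},B)$ as a finite iteration of generalized double extensions and/or double extensions starting from a base object; prepending this iteration to the one extra extension that rebuilds $\tilde{\mathcal{J}}$ from $\mathcal{J}$ yields $\tilde{\mathcal{J}}$ as a finite iteration of the same kind. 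This closes the induction.

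The only genuinely delicate point, and hence the part I would check most carefully, is that nothing in the reduction step obstructs iteration: one must confirm that the intermediate superalgebra $(\mathcal{J},\bullet,B)$ handed back by Theorem \ref{doub-ex} is itself pseudo-euclidean (so that the hypotheses of the theorem are again met at the next stage) and that Corollary \ref{central-element} continues to supply the central odd element needed to perform the next extension as long as the odd dimension exceeds $1$. Since Theorem \ref{doub-ex} indeed returns a pseudo-euclidean mock-Lie superalgebra and the parity argument guarantees the odd dimension reaches only the stopping value $0$, the iteration is well-defined and finite, and no further obstruction arises.
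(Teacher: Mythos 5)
Your proposal is correct and follows essentially the same route as the paper: a strong induction on $n=\dim(\tilde{\mathcal{J}}_{\bar{1}})$ in which Theorem \ref{doub-ex} performs the descent $n\mapsto n-2$. Your additional observations (the parity of $n$ forced by the even non-degenerate form, and the identification of the base object as a pseudo-euclidean mock-Lie superalgebra with trivial odd part rather than literally $\{0\}$) are sound and in fact tidy up the paper's rather terse statement of the base case.
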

\section*{Open Questions}

In \cite{Braiek}, the authors present the concept of anti-Leibniz algebras, described as a "non-commutative version" of mock-Lie algebras. They explore the use of averaging operators and, more broadly, embedding tensors to construct new algebraic structures. The following research questions merit careful consideration.
\begin{itemize}
    \item We will introduce the concept of anti-Leibniz superalgebras, which can be viewed as a "non-commutative analogue" of mock-Lie superalgebras. A classification of these algebras in low dimensions will be provided, highlighting their structural properties and differences from their commutative counterparts. We will then investigate averaging operators and, more generally, embedding tensors as tools for constructing new graded algebraic structures, including various extensions and generalizations within the anti-Leibniz framework.
    \item In \cite{Said}, the authors examine Leibniz algebras equipped with symmetric, nondegenerate, and associative bilinear forms, referred to as quadratic Leibniz algebras. They demonstrate that Leibniz algebras possessing such structures remain within the class of Leibniz algebras and provide several concrete examples. Additionally, they reduce the study of quadratic Leibniz algebras to that of quadratic Lie algebras by introducing specific extensions of Leibniz algebras. The 
$\mathbb Z_2$-graded counterpart of this theory, known as quadratic Leibniz superalgebras, is investigated in \cite{Fahmi}. Building upon earlier works, we introduce the notion of quadratic anti-Leibniz (super)algebras, defined as anti-Leibniz (super)algebras equipped with an (even) (super)symmetric, non-degenerate, and associative bilinear form. We provide characterizations of these structures and propose an inductive framework for describing all quadratic anti-Leibniz (super)algebras. This approach allows us to reduce the study of such algebras to that of quadratic mock-Lie (super)algebras. Moreover, the description yields an explicit algorithm for constructing quadratic anti-Leibniz (super)algebras.
\end{itemize}


\begin{thebibliography}{99}

\bibitem{Abdaoui}
Abdaoui, E.K., Ammar, F., Makhlouf, A., Hom-alternative, Hom-Malcev and Hom-Jordan superalgebras, \textit{Bull. Malays. Math. Sci. Soc.}, \textbf{40}(1), 439--472 (2017).

\bibitem{AmmarAyadiMabroukMakhlouf}
Ammar, F., Ayadi, I., Mabrouk, S., Makhlouf, A., Quadratic color hom-Lie algebras, in: \textit{Proc. Moroccan Andalusian Meet. Algebras Appl.}, Springer, Cham, 287--312 (2018).

\bibitem{Agore1}
Agore, A.L., Militaru, G., On a type of commutative algebras, \textit{Linear Algebra Appl.}, \textbf{485}, 222--249 (2015).

\bibitem{Agore2}
Agore, A.L., Militaru, G., Algebraic constructions for Jacobi-Jordan algebras, \textit{Linear Algebra Appl.}, \textbf{630}, 158--178 (2021).

\bibitem{HS04}
Albuquerque, H., Benayadi, S., Pseudo-Euclidean Malcev superalgebras, \textit{J. Pure Appl. Algebra}, \textbf{187}(1--3), 19--45 (2004).

\bibitem{Attan}
Attan, S., Representations and $\mathcal O$-operators of Hom-(pre)-Jacobi-Jordan algebras, \textit{Preprint}, arXiv:2105.14650 (2021).

\bibitem{Bajo}
Bajo, I., Benayadi, S., Bordemann, M., Description de la structure de certaines superalgèbres de Lie quadratique via la notion de la T$^*$-extension, \textit{Preprint}, arXiv:math/0002146 (2004).

\bibitem{Bak}
Baklouti, A., Ben Salah, W., Mansour, S., Solvable pseudo-Euclidean Jordan superalgebras, \textit{Commun. Algebra}, \textbf{41}(7), 2441--2466 (2013).

\bibitem{BakBen1SymJ.J.A}
Baklouti, A., Benayadi, S., Symplectic Jacobi-Jordan algebras, \textit{Linear Multilinear Algebra}, \textbf{69}(8), 1557--1578 (2021).

\bibitem{Fahmi}
Benayadi, S., Mhamdi, F., Omri, S., Quadratic (resp. symmetric) Leibniz superalgebras, \textit{Commun. Algebra}, \textbf{49}(4), 1725--1750 (2021).

\bibitem{Said}
Benayadi, S., Hidri, S., Quadratic Leibniz algebras, \textit{J. Lie Theory}, \textbf{24}(3), 737--759 (2014).

\bibitem{Benali}
Benali, K., Bialgebras via Manin triples of compatible mock-Lie algebras, \textit{Commun. Algebra}, \textbf{52}(9), 3672--3685 (2024).

\bibitem{KAST1}
Benali, K., Chtioui, T., Hajjeji, A., Mabrouk, S., Bialgebras, the Yang–Baxter equation and Manin triples for mock-Lie algebras. {\it Acta et Commentationes Universitatis Tartuensis De Mathematica}, {\bf 27}(2), 211-233. (2023).

\bibitem{KAST2}
Benali, K., Chtioui, T., Hajjeji, A., Mabrouk, S., Deformations of compatible Hom-mock-Lie algebras, \textit{Algebra Discrete Math.}, \textbf{39}(1) (2025).

\bibitem{BS}
Benayadi, S., Benamor, H., Double extension of quadratic Lie superalgebras, \textit{Commun. Algebra}, \textbf{27}(1), 67--88 (1999).

\bibitem{Braiek}
Braiek, S., Chtioui, T., Mabrouk, S., Anti-Leibniz algebras: A non-commutative version of mock-Lie algebras, \textit{J. Geom. Phys.}, \textbf{209}, 105385 (2025).

\bibitem{B.F}
Burde, D., Fialowski, A., Jacobi-Jordan algebras, \textit{Linear Algebra Appl.}, \textbf{495}, 586--594 (2016).

 

\bibitem{JMF}
Figueroa-O’Farrill, J.M., Stanciu, S., On the structure of symmetric self-dual Lie algebras, \textit{J. Math. Phys.}, \textbf{37}(8), 4121--4134 (1996).

\bibitem{Haliya}
Haliya, C.E., Houndedji, G.D., On a pre-Jacobi-Jordan algebra: Relevant properties and double construction, \textit{Preprint}, arXiv:2007.06736 (2020).

\bibitem{C.G}
Haliya, C.E., Houndedji, G.D., Hom-Jacobi-Jordan and Hom-antiassociative algebras with symmetric invariant nondegenerate bilinear forms, \textit{Quasigroups Relat. Syst.}, \textbf{29}, 61--88 (2021).



\bibitem{Hou2}
Hou, B., Cui, Z., A bialgebra theory of anti-Leibniz algebras, \textit{Preprint}, arXiv:2412.20028 (2024).

\bibitem{MaMabroukMakhloufSong}
Ma, T., Mabrouk, S., Makhlouf, A., Song, F., Nijenhuis operators and mock-Lie bialgebras, \textit{Preprint}, arXiv:2501.11070 (2025).

\bibitem{MabroukNcib}
Mabrouk, S., Ncib, O., Sendi, S., Yang–Baxter equations and O-operators of a Hom-Jordan superalgebra with representation, \textit{J. Geom. Phys.}, \textbf{201}, 105215 (2024).

\bibitem{MartínezZelmanov}
Martínez, C.E., Zelmanov, E., Representation theory of Jordan superalgebras I, \textit{Trans. Amer. Math. Soc.}, 815--846 (2010).

\bibitem{Md}
Medina, A., Revoy, Ph., Algèbres de Lie et produit scalaire invariant, \textit{Ann. Sci. Éc. Norm. Supér.}, \textbf{18}(4), 553--561 (1985).

\bibitem{P.Z}
Zusmanovich, P., Special and exceptional Jacobi-Jordan algebras, \textit{Linear Algebra Appl.}, \textbf{518}, 79--96 (2017).

\bibitem{ZS}
Zel’manov, E.I., Shestakov, I.P., Prime alternative superalgebras and nilpotence of the radical of a free alternative algebra, \textit{Math. USSR-Izv.}, \textbf{37}(1), 19--36 (1991).

\end{thebibliography}
\end{document}